\newtheorem{thm}{Theorem}[section]
\newtheorem{cor}[thm]{Corollary}
\newtheorem{lem}[thm]{Lemma}
\newtheorem{prop}[thm]{Proposition}
\theoremstyle{definition}
\newtheorem{defn}[thm]{Definition}
\theoremstyle{remark}
\newtheorem{rem}[thm]{Remark}
\numberwithin{equation}{section}
\newcommand{\R}{\mathbb R}
\newcommand{\eps}{\epsilon}
\newcommand{\p}{\partial}
\newcommand{\di}{\displaystyle}
\newcommand{\comment}[1]{}
\begin{document}

\title[The thin one-phase problem]{$C^{2,\alpha}$ regularity of flat free boundaries for the thin one-phase problem.}
\author{D. De Silva}
\address{Department of Mathematics, Barnard College, Columbia University, New York, NY 10027}
\email{\tt  desilva@math.columbia.edu}
\author{O. Savin}
\address{Department of Mathematics, Columbia University, New York, NY 10027}\email{\tt  savin@math.columbia.edu}


\begin{abstract}We prove $C^{2,\alpha}$ regularity of sufficiently flat free boundaries, for the {\it thin} one-phase problem in which the free boundary occurs on a lower dimensional subspace. This problem appears also as a model of a one-phase free boundary problem in the context of the fractional Laplacian $(-\Delta)^{1/2}$.\end{abstract} 
\maketitle

\section{Introduction}
Let $g(x,s)$ be a continuous non-negative function in the  ball $B_1 \subset \R^{n+1}= \R^n \times \R,$ which vanishes on a subset of $\R^{n} \times\{ 0\}$ and it is even in the $s$ variable. We consider the following free boundary problem 

\begin{equation}\label{FBintro}\begin{cases}
\Delta g = 0, \quad \textrm{in $ B_1^+ 
(g):= B_1 \setminus \{(x,0) : g(x,0) = 0 \} ,$}\\
\dfrac{\p g}{\p U}= 1, \quad \textrm{on  $F(g):=  \p_{\R^n}\{x \in \mathcal{B}_1 : g(x,0)>0\} \cap \mathcal{B}_1,$} 
\end{cases}\end{equation}
where \begin{equation}\label{nabla_U}
\dfrac{\p g}{\p U}(x_0): = \di\lim_{t \rightarrow 0^+} \frac{g(x_0+t\nu(x_0),0)}{\sqrt t},  \quad x_0 \in F(g) \end{equation} with $\nu(x_0)$ the normal to $F(g)$ at $x_0$ pointing toward  $\{x : g(x,0)>0\}$ and  $\mathcal{B}_r \subset \R^n$ the $n$-dimensional ball of radius $r$ (centered at 0).

If $F(g)$ is $C^2$ then it can be shown (see Section 7) that any function $g$ which is harmonic in $B^+_1(g)$ has an asymptotic expansion at a point $ x_0\in F(g),$
$$g(x,s) = \alpha(x_0) U((x-x_0) \cdot \nu(x_0), s)  + o(|x-x_0|^{1/2}+s^{1/2}).$$
Here $U(t,s)$ is the real part of $\sqrt{z}$ which 
in the polar coordinates $$t= r\cos\theta, \quad  s=r\sin\theta, \quad r\geq 0, \quad -\pi \leq  \theta \leq \pi,$$ is given by
\begin{equation}\label{U}U(t,s) = r^{1/2}\cos \frac \theta 2. \end{equation}
Then, the limit in \eqref{nabla_U} represents the coefficient $\alpha(x_0)$ in the expansion above (which justifies our notation)
$$\frac{\p g}{\p U}(x_0)=\alpha(x_0)$$ 
and our free boundary condition requires that $\alpha \equiv 1$ on $F(g).$ 

Solutions to our free boundary problem \eqref{FBintro} are critical points to the energy functional 
$$E(g):= \int_{B_1}  |\nabla g|^2 \,dx\,ds + \frac \pi 2 \mathcal H^{n}(\{g>0\} \cap \mathcal B_1). $$ If the second term is replaced by $\mathcal H^{n+1}(\{g>0\})$, we obtain 
the classical one-phase free boundary problem (see for example \cite{AC}.) In our case the free boundary occurs on the lower dimensional subspace $\R^{n} \times \{0\}$ and for this reason  
we  refer to \eqref{FBintro} as to the {\it thin one-phase} free boundary problem.

This free boundary problem was first considered by Caffarelli, Roquejoffre and Sire \cite{CafRS} as a model of a one-phase Bernoulli type free boundary problem in the context of the fractional Laplacian. It is relevant in applications when turbulence or long-range interactions are present, for example in flame propagation and also in the propagation of surfaces of discontinuities. For further information on this model see \cite{CafRS} and the references therein.

In this paper we are interested in the question of regularity for the free boundary $F(g)$. Concerning this issue the authors of \cite{CafRS} proved that in dimension $n=2$, a Lipschitz free boundary is $C^1$. In \cite{DR}, the first author and Roquejoffre showed that in any dimension if the free boundary $F(g)$ is sufficiently flat then it is $C^{1,\alpha}.$

This paper is the first of a series of papers, which investigate the regularity of $F(g)$ and in particular the question of whether Lipschitz free boundaries are smooth.  This basic question was answered positively in the case of minimal surfaces by De Giorgi \cite{DG} and by Caffarelli \cite{C1} for the standard one-phase free boundary problem. 

Our strategy to obtain the regularity of Lipschitz free boundaries is to use a Weiss-type monotonicity formula \cite{W} combined with flatness results and ad hoc Schauder type estimates near the free boundary. To implement this method we need to obtain first $C^{2,\alpha}$ estimates for flat free boundaries, which we achieve in this paper.  Unlike the case of minimal surfaces and of the standard one-phase problem, $C^{2,\alpha}$ estimates do not seem to follow  easily from $C^{1,\alpha}$. 
It appears that $C^{2,\alpha}$  is the critical regularity needed to obtain $C^\infty$ smoothness of the free boundary, as well as the regularity needed to implement our blow-up analysis.

 The following is the main result of this paper  (see Section 2 for the precise definition of viscosity solution to \eqref{FBintro}).
 
\begin{thm} \label{mainT}There exists $\bar \eps >0$ small depending only on $n$, such that if $g$ is a viscosity solution to \eqref{FBintro}  satisfying
\begin{equation*} \{x \in \mathcal{B}_1 : x_n \leq - \bar \eps\} \subset \{x \in \mathcal{B}_1 : g(x,0)=0\} \subset \{x \in \mathcal{B}_1 : x_n \leq \bar \eps \},\end{equation*} then $F(g)$ is a $C^{2,\alpha}$ graph in $\mathcal{B}_{\frac 1 2}$ for every $\alpha \in (0,1)$ with  $C^{2,\alpha}$ norm bounded by a constant depending on $\alpha$ and $n$. \end{thm}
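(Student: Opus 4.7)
\emph{Overall strategy.} Since \cite{DR} already yields $C^{1,\alpha}$ regularity of $F(g)$ under the flatness hypothesis, the plan is to upgrade this to $C^{2,\alpha}$ by running a \emph{second-order} improvement-of-flatness scheme at each free-boundary point: rather than approximating $F(g)$ by its tangent plane, we will approximate it by an osculating paraboloid and show that the approximation error is controlled at smaller scales with an extra geometric factor $r^{2+\alpha}$. Iterating this decay at dyadic scales will yield the $C^{2,\alpha}$ graph representation of $F(g)$.

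\emph{Linearization and compactness.} Work near a free-boundary point which, after a rotation, we place at the origin with $\{x_n=0\}$ tangent to $F(g)$. The natural class of second-order approximating profiles is
\[
U_{Q,a}(x,s) := (1 + a(x'))\, U\!\bigl(x_n - Q(x'),\, s\bigr), \qquad x' = (x_1,\dots,x_{n-1}),
\]
where $a$ is linear and $Q$ is quadratic in $x'$, with $a$ constrained so that the free boundary condition $\p U_{Q,a}/\p U = 1$ from \eqref{FBintro} is formally satisfied on $\{x_n = Q(x')\}$ to the appropriate order. Suppose $g$ is $\eps^{3/2}$-close to some $U_{Q,a}$ in $B_1$, and rescale the deviation $\tilde w := \eps^{-3/2}(g - U_{Q,a})$. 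A Harnack-type compactness argument analogous to that of \cite{DR} shows that, up to subsequence, $\tilde w \to w$ as $\eps \to 0$, where $w$ solves the linearized thin problem: $\Delta w = 0$ off $\{x_n \leq 0,\, s=0\}$, $w$ is even in $s$, $w = 0$ on $\{x_n \leq 0,\, s=0\}$, and a linear Neumann-type condition holds on $\{x_n > 0,\, s=0\}$.

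\emph{Regularity of the linearized problem.} The core analytic step is to show that every such $w$ can be approximated, to second order in the tangential variables, by combinations of the two natural profile variations $U$ and $\p_t U$:
\[
\bigl\| w - \ov a(x')\, U(x_n,s) - \ov Q(x')\, \p_t U(x_n,s) \bigr\|_{L^\infty(B_r)} \leq C\, r^{2+\alpha}\, \norm{w}_{L^\infty(B_1)},
\]
for suitable linear $\ov a$ and quadratic $\ov Q$. I would set $w = U \cdot v$: since $w$ and $U$ share the same zero set and the same $\sqrt{r}$-decay toward it, the quotient $v$ is a bounded solution of a degenerate elliptic equation with $A_2$-weight on the smooth half-space $\{s \geq 0\}$, and the linearized free-boundary condition becomes a genuine Neumann condition for $v$ on $\{s=0\}$. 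Tangential differentiation then reduces the polynomial expansion for $w$ to a standard boundary $C^{1,\alpha}$ regularity statement for $v$, which follows from Schauder-type estimates adapted to the degenerate operator (in the spirit of Fabes--Kenig--Serapioni), combined with a spectral expansion along the edge to identify the admissible second-order modes as precisely the multiples of $U$ and $\p_t U$.

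\emph{Iteration and main obstacle.} Combining the compactness with the polynomial decay yields the second-order dichotomy: for some fixed $\rho \in (0,1)$, either $g$ is already $\rho^{2+\alpha}\eps^{3/2}$-close to the same $U_{Q,a}$ in $B_\rho$, or one can update $(Q,a)\to(Q',a')$ and reduce the flatness by the factor $\rho^{2+\alpha}$. Iterating on geometric scales produces a Cauchy sequence of quadratic polynomials converging with $C^{\alpha}$ rate on the coefficients to the osculating paraboloid of $F(g)$ at $0$, which is exactly the asserted $C^{2,\alpha}$ regularity. The principal obstacle is the regularity step: the linearized problem lives on a slit domain whose singular set is the codimension-two edge $\{x_n = 0,\, s=0\}$, and classical boundary Schauder theory does not directly apply there. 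The sharp polynomial expansion will require exploiting the conformal structure of $(t,s)\mapsto \sqrt{t+is}$ to handle the $(t,s)$-plane exactly, combined with a separation of variables along $x'$ that rules out non-polynomial boundary behavior and pins down the correct second-order model solutions.
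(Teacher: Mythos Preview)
Your overall strategy---second-order improvement of flatness via compactness and regularity of the linearized problem---matches the paper's, and your final remark about the conformal map $(t,s)\mapsto\sqrt{t+is}$ combined with tangential differentiation is exactly how Section~6 handles the edge singularity. There is, however, a real gap in the choice of approximating profiles. The family $U_{Q,a}(x,s)=(1+a(x'))\,U(x_n-Q(x'),s)$ is too small: the linearized problem (in the paper's hodograph variable $h$, with $U_n h$ harmonic and $|\nabla_r h|=0$ on $L$) has second-order solutions
\[
h(X)=\xi_0\cdot x'+\tfrac12(x')^TM_0x'-\tfrac{a_0}{2}r^2-b_0\,rx_n,\qquad a_0+b_0=\mathrm{tr}\,M_0,\quad r^2=x_n^2+s^2,
\]
and the $r^2$ and $rx_n$ pieces are corrections to the \emph{normal profile} of the solution near $F(g)$; they do not move the free boundary at all. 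In your $w$-picture they produce the two-dimensional harmonic mode $r^{3/2}\cos(3\theta/2)$ in the $(x_n,s)$-plane, which is \emph{not} of the form $\bar a(x')\,U+\bar Q(x')\,U_t$ (both of those have angular part $\cos(\theta/2)$ only). Left unabsorbed, this mode contributes a residual of order $\rho^{3/2}$ in $B_\rho$, so the iteration cannot gain the factor $\rho^{2+\alpha}$. Note also that a nonzero linear $a(x')$ forces $\partial U_{Q,a}/\partial U=1+a(x')\neq 1$ away from the origin, so it is not actually a usable parameter. The paper's remedy is to enlarge the family to $V_{\mathcal S,a,b}(X)=v_{a,b}(t,s)$ with $v_{a,b}(t,s)=(1+\tfrac a4\rho+\tfrac b2 t)\,U(t,s)$ and $t$ the signed distance to the paraboloid $\mathcal S$; the two extra scalars $a,b$ (tied by $a+b=\mathrm{tr}\,M$) absorb precisely the missing modes.

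Two smaller points. First, the paper linearizes through the hodograph variable $\tilde g$ defined by $U(X)=g(X-\tilde g(X)e_n)$, which produces a \emph{bounded} limit $h$; your direct difference $g-U_{Q,a}$ behaves like $\eps\,U_n$ near $L$ and does not converge uniformly there, so the compactness step as written needs reformulation. Second, the reduction $w=U\cdot v$ gives $\mathrm{div}(U^2\nabla v)=0$, but $U^2$ vanishes to second order on $P$ away from $L$ and is therefore not an $A_2$ weight in $\R^{n+1}$, so Fabes--Kenig--Serapioni does not apply; the conformal-map argument you mention at the end is indeed the correct substitute, and it is what the paper actually uses.
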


The proof of Theorem \ref{mainT} follows the lines of the flatness theorem in \cite{DR}, which is inspired by the regularity theory developed by the second author in \cite{S}. In this case the proof is more technical since we need to approximate the free boundary quadratically. To do so, we introduce a family of approximate solutions $V_{\mathcal S,a,b}$ which have the same role as quadratic polynomials in the regularity theory of elliptic equations. Such family will be used also in a subsequent paper to obtain boundary Schauder type estimates for solutions to our problem. 

In the last section of this paper we also prove some useful  general facts about viscosity solutions $g$ to our free boundary problem \eqref{FBintro}, such as $C^{1/2}$-optimal regularity, asymptotic expansion near regular points of the free boundary and compactness.

The paper is organized as follows. In Section 2 we recall notation, definitions and some basic  results from \cite{DR}, including the linearized problem associated to \eqref{FBintro}. Section 3 is devoted to the construction of the quadratic approximate solutions $V_{\mathcal S, a,b}$.  In Section 4 we prove a Harnack type inequality for solutions to \eqref{FBintro}. In Section  5 we establish the improvement of flatness result via a compactness argument which makes crucial use of the Harnack inequality of Section 4. Our argument reduces the problem to studying the regularity of solutions to  the linearized problem. This is pursued in Section 6. We conclude the paper with Section 7 where we provide some general facts about viscosity solutions to \eqref{FBintro}.
\section{Definitions and basic lemmas}

In this section we recall notation, definitions and some necessary results from \cite{DR}. 

\subsection{Basic facts.} Throughout the paper, constants which depend only on the dimension $n$ will be called universal.  In general, small constants will be denoted by $c,c_i$ and large constants by $C, C_i$ and they may change from line to line in the body of the proofs. 

A point $X \in \R^{n+1}$ will be denoted by $X= (x,s) \in \R^n \times \R$, and sometimes $x=(x',x_n)$ with $x'=(x_1,\ldots, x_{n-1}).$ 

A ball in $\R^{n+1}$ with radius $r$ and center $X$ is denoted by $B_r(X)$ and for simplicity $B_r = B_r(0)$. Also $\mathcal{B}_r$ denotes the $n$-dimensional ball $B_r \cap \{s=0\}$. 

Let $v \in C(B_1)$ be a non-negative function. We associate to $v$ the following sets: \begin{align*}
& B_1^+(v) := B_1 \setminus \{(x,0) : v(x,0) = 0 \} \subset \R^{n+1};\\
& \mathcal{B}_1^+(v):= B_1^+(v) \cap \mathcal{B}_1 \subset \R^{n};\\
& F(v) := \p_{\R^n}\mathcal{B}_1^+(v)\cap \mathcal{B}_1 \subset \R^{n}.
\end{align*}  Often subsets of $\R^n$ are embedded in $\R^{n+1}$, as it will be clear from the context. 

We consider the thin one-phase free boundary problem

\begin{equation}\label{FB}\begin{cases}
\Delta g = 0, \quad \textrm{in $B_1^+(g) ,$}\\
\dfrac{\p g}{\p U}= 1, \quad \textrm{on $F(g)$}, 
\end{cases}\end{equation}
where 
$$\dfrac{\p g}{\p U}(x_0):=\di\lim_{t \rightarrow 0^+} \frac{g(x_0+t\nu(x_0),0)} {\sqrt t} , \quad \textrm{$X_0=(x_0,0) \in F(g)$}.$$ 
Here $\nu(x_0)$ denotes the unit normal to $F(g)$, the free boundary of $g$, at $x_0$ pointing toward $\mathcal{B}_1^+(g)$.




We now recall the notion of viscosity solutions to \eqref{FB}, introduced in \cite{DR}. 

\begin{defn}Given $g, v$ continuous, we say that $v$
touches $g$ by below (resp. above) at $X_0 \in B_1$ if $g(X_0)=
v(X_0),$ and
$$g(X) \geq v(X) \quad (\text{resp. $g(X) \leq
v(X)$}) \quad \text{in a neighborhood $O$ of $X_0$.}$$ If
this inequality is strict in $O \setminus \{X_0\}$, we say that
$v$ touches $g$ strictly by below (resp. above).
\end{defn}

\begin{defn}\label{defsub} We say that $v \in C(B_1)$ is a (strict) comparison subsolution to \eqref{FB} if $v$ is a  non-negative function in $B_1$ which is even with respect to $s=0$ and it satisfies
\begin{enumerate} \item $v$ is $C^2$ and $\Delta v \geq 0$ \quad in $B_1^+(v)$;\\
\item $F(v)$ is $C^2$ and if $x_0 \in F(v)$ we have
$$v (x_0+t\nu(x_0),0) = \alpha(x_0) \sqrt t + o(\sqrt t), \quad \textrm{as $t \rightarrow 0^+,$}$$ with $$\alpha(x_0) \geq 1,$$ where $\nu(x_0)$ denotes the unit normal at $x_0$ to $F(v)$ pointing toward $\mathcal{B}_1^+(v);$\\
\item Either $v$ is not harmonic in $B_1^+(v)$ or $\alpha(x_0) >1$ at all $x_0 \in F(v).$
\end{enumerate} 
\end{defn}

Similarly one can define a (strict) comparison supersolution. 

\begin{defn}We say that $g$ is a viscosity solution to \eqref{FB} if $g$ is a  continuous non-negative function in $B_1$ which is even with respect to $s=0$ and it satisfies
\begin{enumerate} \item $\Delta g = 0$ \quad in $B_1^+(g)$;\\ \item Any (strict) comparison subsolution (resp. supersolution) cannot touch $g$ by below (resp. by above) at a point $X_0 = (x_0,0)\in F(g). $\end{enumerate}\end{defn}

\begin{rem}\label{rescale} We remark that if $g$ is a viscosity solution to \eqref{FB} in $B_\lambda$, then $$g_{\lambda}(X) = \lambda^{-1/2} g(\lambda X), \quad X \in B_1$$ is a viscosity solution to \eqref{FB} in $B_1.$
\end{rem}

Finally, we state for completeness the boundary Harnack inequality which will be often used throughout the paper.  This version follows from the boundary Harnack inequality  proved in \cite{CFMS}.

\begin{thm}[Boundary Harnack Inequality] Let $v$ be harmonic in $B_1^+(v)$ and let $F(v)$ be a Lipschitz graph in the $e_n$-direction (pointing towards the positive phase) with $0\in F(v)$. If $w$ is harmonic in $B_1^+(w)=B_1^+(v),$ then 
$$\frac{w}{v} \leq C \frac w v (\frac 1 2 e_n) \quad \mbox{in $B_{3/4}$,}$$ with $C$ depending only on $n$ and on the Lipschitz constant of $F(v).$
\end{thm}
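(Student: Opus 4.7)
The plan is to invoke the boundary Harnack principle of Caffarelli-Fabes-Mortola-Salsa, after recasting our slit-domain setting in a form compatible with NTA-based methods. Write $\mathcal{S} := \mathcal{B}_1 \setminus \mathcal{B}_1^+(v)$, the vanishing set of $v$ in the thin space. Since $F(v)$ is a Lipschitz graph in the $e_n$-direction with $0 \in F(v)$, the set $\mathcal{S}$ is a closed Lipschitz subgraph of $\mathcal{B}_1$, and the reference point $\frac{1}{2} e_n$ lies at distance comparable to $1$ from $\mathcal{S}$ inside $\mathcal{B}_1^+(v)$. Both $v$ and $w$ are harmonic in $\Omega := B_1 \setminus \mathcal{S}$, are even in $s$, and (for the quotient $w/v$ to be nontrivial) vanish on $\mathcal{S}$.

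The key reduction uses evenness: restrict first to the upper half-ball $B_1 \cap \{s > 0\}$, where $v$ and $w$ are harmonic, satisfy the Neumann condition $\partial_s = 0$ on the positive-phase part $\mathcal{B}_1^+(v)$ of the thin space (by evenness), and vanish (Dirichlet) on $\mathcal{S}$. Even reflection across the Neumann portion extends these functions harmonically back to all of $\Omega$, which simply recovers the original $v$ and $w$. On this unfolded picture $\mathcal{S}$ plays the role of a codimension-one Lipschitz slit, and the CFMS framework must be adapted accordingly — either by working directly in the half-ball with the mixed Dirichlet/Neumann problem along the Lipschitz boundary $\mathcal{S}$ (a classical extension of CFMS for mixed boundary problems), or by viewing $\Omega$ as a two-sheeted domain whose two sides of $\mathcal{S}$ are identified by evenness, making the quotient effectively NTA with respect to the Lipschitz boundary $\mathcal{S}$. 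Either formulation yields the boundary Harnack estimate $w/v \leq C\,(w/v)(Y_0)$ at some interior reference point $Y_0$ comparably far from $\mathcal{S}$; interior Harnack inside $\mathcal{B}_1^+(v)$ (which contains a neighborhood of $\frac{1}{2}e_n$ by the Lipschitz hypothesis on $F(v)$) then lets us replace $Y_0$ by $\frac{1}{2} e_n$, with $C$ depending only on $n$ and the Lipschitz constant of $F(v)$.

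The main obstacle is the codimension-one nature of the effective boundary $\mathcal{S}$ in $\R^{n+1}$: classical NTA theory expects codimension-zero Lipschitz boundaries, whereas $\mathcal{S} \subset \{s=0\}$ has lower-dimensional complement, so standard exterior-corkscrew balls near $\mathcal{S}$ do not exist. The unfolding/half-space reformulation via evenness is precisely what bridges this gap: once the Neumann portion is reflected away and only the Dirichlet portion $\mathcal{S}$ remains as boundary, the interior corkscrew (taking $x_0 + \frac{r}{2} e_s$ just off the thin space), exterior corkscrew (on the opposite sheet in the two-sheeted picture), and Harnack chain conditions (running through the bulk $\{|s| > cr\}$) all follow from the Lipschitz regularity of $F(v)$, and the rest of the CFMS argument (barrier comparisons near the Lipschitz boundary, Hölder oscillation decay for the quotient) goes through as usual.
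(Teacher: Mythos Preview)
The paper does not give its own proof of this theorem: it is stated ``for completeness'' with the one-line remark that ``this version follows from the boundary Harnack inequality proved in [CFMS],'' and is then used as a black box throughout. So there is nothing to compare your argument against except that citation.

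Your proposal is precisely an attempt to unpack how the CFMS result applies in this slit-domain setting, and the outline is sound: the real issue, which you correctly isolate, is that the vanishing set $\mathcal S\subset\{s=0\}$ is a thin (codimension-one) slit, so the domain $B_1\setminus\mathcal S$ fails the exterior corkscrew condition and is not literally NTA. Your two workarounds --- restricting to $\{s>0\}$ and treating the problem as a mixed Dirichlet/Neumann problem along a Lipschitz interface, or passing to a two-sheeted cover where the slit becomes a genuine two-sided Lipschitz boundary --- are both standard devices for exactly this situation, and either one does feed into the CFMS machinery. The sketch is correct in spirit; in a fully rigorous write-up you would need either a precise reference for boundary Harnack with mixed Lipschitz boundary data, or a careful verification of the interior/exterior corkscrew and Harnack-chain conditions on the double cover. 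But since the paper itself is content with the bare citation to [CFMS], your elaboration already goes well beyond what the authors supply.
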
  

\subsection{The function $\tilde g$} 

Here and henceforth we denote by $P$ the half-hyperplane $$P:= \{X \in \R^{n+1} : x_n \leq 0, s=0\}$$ and by $$L:= \{X \in \R^{n+1}: x_n=0, s=0\}.$$ 
 Also, throughout the paper
we call $U(X):=U(x_n,s),$ where $U$ is the function defined in \eqref{U}.

Let $g$ be a  continuous non-negative function in $\overline{B}_\rho$. As in \cite{DR}, we define the multivalued map $\tilde g$ which associate to each $X \in \R^{n+1} \setminus P$ the set $\tilde g(X) \subset \R$ via the formula 
\begin{equation}\label{deftilde} U(X) = g(X - w e_n), \quad \forall w \in \tilde g(X).\end{equation} 
We write $ \tilde g(X)$ to denote any of the values in this set.

This change of variables has the same role as the partial Hodograph transform for the standard one-phase problem. Our free boundary problem becomes a problem with fixed boundary for $\tilde g$,  and the limiting values of $\tilde g$ on $L$ give the free boundary of $g$ as a graph in the $e_n$ direction. 

Recall that if g satisfies the $\eps$-flatness assumption \begin{equation}\label{flattilde}U(X - \eps e_n) \leq g(X) \leq U(X+\eps e_n) \quad \textrm{in $B_\rho,$ for $\eps>0$}\end{equation} then $\tilde g(X) \neq \emptyset$ for $X \in B_{\rho-\eps} \setminus P$  and $|\tilde g(X)| \leq \eps,$ hence  we can associate to $g$ a possibly multi-valued function $\tilde{g}$ defined at least on $B_{\rho-\eps} \setminus P$ and taking values in $[-\eps,\eps]$ which satisfies\begin{equation}\label{til} U(X) = g(X - \tilde g(X) e_n).\end{equation} 
 Moreover if $g$ is strictly monotone  in the $e_n$-direction in $B^+_\rho(g)$, then $\tilde{g}$ is single-valued. 

We recall the following lemmas  from \cite{DR}.

\begin{lem}\label{elem} Let $g, v$ be non-negative continuous functions in $B_\lambda$ with  $v$ strictly increasing in the $e_n$-direction in $B_\lambda^+(v).$  Assume that $g$ and $v$ satisfy the flatness assumption \eqref{flattilde} in $B_\lambda$
for $\eps>0$ small. If 
$$v \leq g \quad \text{in $B_\lambda,$}$$ then
$$\tilde v \leq \tilde g \quad \text{on  $B_{\lambda-\eps} \setminus P.$}$$
Viceversa, if $$\tilde v \leq \tilde g \quad \text{on $B_\sigma \setminus P,$}$$ for some $0< \sigma< \lambda - \eps,$ then
$$v \leq g \quad \text{on $B_{\sigma-\eps}$}.$$
\end{lem}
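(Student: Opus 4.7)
The plan is to reduce both implications to a one-dimensional analysis along the vertical fiber through each relevant point $X$. Consider the one-dimensional slices
$$\phi(t):=v(X-te_n), \qquad \psi(t):=g(X-te_n), \qquad t\in[-\eps,\eps].$$
By the $\eps$-flatness assumption \eqref{flattilde}, $\phi(-\eps)\geq U(X)\geq \phi(\eps)$ and similarly for $\psi$, so $\tilde v(X)$ and the elements of $\tilde g(X)$ arise as zeros of $\phi-U(X)$ and $\psi-U(X)$ inside $[-\eps,\eps]$; the former is unique thanks to the strict monotonicity of $v$ in the $e_n$-direction on $B_\lambda^+(v)$.

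For the first implication, fix $X\in B_{\lambda-\eps}\setminus P$ and an arbitrary $w\in\tilde g(X)$. Strict monotonicity of $v$ on $B_\lambda^+(v)$ forces $\phi$ to be strictly decreasing on its positive set and, once $\phi$ vanishes, to remain zero: any re-entry to a positive value would, by strict monotonicity applied to points in $B_\lambda^+(v)$ straddling a zero, contradict the continuity of $\phi$ at that zero. Using $v\leq g$ we get $\phi(w)\leq\psi(w)=U(X)=\phi(\tilde v(X))$, and since $\phi(\tilde v(X))=U(X)>0$ lies in the positive part, this forces $w\geq\tilde v(X)$. As $w$ was arbitrary in $\tilde g(X)$, the inequality $\tilde v(X)\leq\tilde g(X)$ follows.

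For the converse, fix $Y\in B_{\sigma-\eps}$ and assume $v(Y)>0$ (otherwise the inequality is trivial). Because $U(\cdot,s)$ is strictly increasing on the half-line where it is positive, there is a unique point $X$ on the vertical $e_n$-line through $Y$ with $U(X)=v(Y)$; this forces $Y=X-\tilde v(X)e_n$, and $\eps$-flatness gives $|X-Y|\leq\eps$, so $X\in B_\sigma\setminus P$. By hypothesis $\tilde v(X)\leq w^\ast:=\min\tilde g(X)$. The minimality of $w^\ast$, combined with $\psi(-\eps)\geq U(X)$ and the intermediate value theorem, implies $\psi\geq U(X)$ on the whole interval $[-\eps,w^\ast]$: any dip below $U(X)$ on this interval would produce an earlier zero of $\psi-U(X)$, contradicting the choice of $w^\ast$. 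Since $\tilde v(X)\in[-\eps,w^\ast]$, we conclude $g(Y)=\psi(\tilde v(X))\geq U(X)=v(Y)$.

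The main subtlety lies in the monotone behavior of $\phi$ along the vertical fiber in the first step, since strict monotonicity of $v$ is assumed only inside $B_\lambda^+(v)$ and does not directly transfer across its free boundary; this is handled by the continuity argument sketched above, which shows that $\{\phi>0\}$ is an initial segment of $[-\eps,\eps]$. No monotonicity is required for $g$ in either direction: all the control on $g$ comes from its $\eps$-flatness and from working with the minimal element of the possibly multi-valued set $\tilde g(X)$.
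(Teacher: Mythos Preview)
Your argument is correct. The paper does not actually prove this lemma here---it is recalled from \cite{DR}---so there is no in-paper proof to compare against. Your fiberwise reduction via $\phi(t)=v(X-te_n)$ and $\psi(t)=g(X-te_n)$ is the natural approach: the ``no re-entry'' observation (that $\{\phi>0\}$ is an initial segment of $[-\eps,\eps]$, since a zero followed by a return to positivity would force $\phi$ to increase from $0$ on a segment lying in $B_\lambda^+(v)$, contradicting strict monotonicity) cleanly handles the first implication, and working with $w^*=\min\tilde g(X)$ together with the intermediate value theorem handles the second without any monotonicity assumption on $g$.
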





\begin{lem}\label{linearcomp}
Let $g, v$ be respectively a solution and a subsolution to \eqref{FB} in $B_2$, with $v$ strictly increasing in the $e_n$-direction in $B_2^+(v).$ Assume that $g$ and $v$ satisfy the flatness assumption \eqref{flattilde} in $B_2$
for $\eps>0$ small. If,
\begin{equation}\label{start}
 \tilde v + \sigma \leq  \tilde g \quad \text{in $(B_{3/2} \setminus \overline{B}_{1/2}) \setminus P,$} 
\end{equation} for some $\sigma >0,$ then 
\begin{equation}\label{conclusion}
 \tilde v+ \sigma \leq \tilde g  \quad \text{in $B_{3/2} \setminus P.$} 
\end{equation} 
\end{lem}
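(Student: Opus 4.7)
The strategy is a sliding argument in a translation parameter combined with Lemma \ref{elem}, the strong maximum principle, and the viscosity solution property of $g$. For $t \in \R$ let $v^t(X) := v(X + t e_n)$, which is again a strict comparison subsolution in a translate of $B_2$ containing $B_{3/2}$ for small $|t|$. A direct computation from \eqref{deftilde} yields $\tilde{v^t} = \tilde v + t$. Define
$$
t^* := \sup\{t \in \R : \tilde v + t \leq \tilde g \text{ on } B_{3/2} \setminus P\}.
$$
The flatness assumption makes this supremum finite, and continuity makes it attained; thus it suffices to prove $t^* \geq \sigma$, so suppose for contradiction $t^* < \sigma$.

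By \eqref{start}, on the annulus $(B_{3/2} \setminus \overline B_{1/2}) \setminus P$ we have $\tilde g - \tilde v - t^* \geq \sigma - t^* > 0$, so the minimum value $0$ of $\tilde g - \tilde v - t^*$ over $\overline{B_{3/2}\setminus P}$ is attained at some $X_0 \in \overline{B}_{1/2}$. Applying Lemma \ref{elem} (second direction) to the pair $v^{t^*}, g$ converts the inequality $\tilde{v^{t^*}} = \tilde v + t^* \leq \tilde g$ on $B_{3/2}\setminus P$ into $v^{t^*} \leq g$ on $B_{3/2 - \eps}$, with equality at $Y_0 := X_0 - \tilde g(X_0) e_n \in \overline{B}_{1/2 + \eps}$, where $v^{t^*}(Y_0) = g(Y_0) = U(X_0)$.

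Now split into two cases. If $X_0 \notin L$, then $U(X_0) > 0$, so $Y_0$ lies strictly in the positive phase of both $g$ and $v^{t^*}$, where $g$ is harmonic and $v^{t^*}$ is subharmonic. The strong maximum principle applied to $v^{t^*} - g \leq 0$ forces $v^{t^*} \equiv g$ throughout the connected component of $B_{3/2 - \eps}^+(g) \cap \{v^{t^*} > 0\}$ containing $Y_0$. The $\eps$-flatness makes both positive phases contain $B_{3/2 - \eps} \cap \{s > 0\}$, so this component reaches the annular region; translating back via Lemma \ref{elem} (first direction) then gives $\tilde g = \tilde{v^{t^*}}$ on a nonempty open subset of the annulus, contradicting the strict inequality there. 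If instead $X_0 \in L$, then $U(X_0) = 0$, so $Y_0 \in F(g) \cap F(v^{t^*})$ and $v^{t^*}$ is a strict comparison subsolution touching $g$ from below at a free boundary point, directly contradicting the viscosity solution property of $g$.

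\textbf{Main obstacle.} The technical step is the first case: propagating the identity $v^{t^*} = g$ from the single interior touching $Y_0$ across the full connected component of the positive phase all the way into the annular region. The $\eps$-flatness is what makes this propagation transparent, since both positive phases then contain the half-ball $\{s > 0\} \cap B_{3/2 - \eps}$, which connects $Y_0$ to the annulus. The second case and the translation bookkeeping between original and $\tilde{\,\cdot\,}$ coordinates are routine once the definitions are unpacked.
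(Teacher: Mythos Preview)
The paper does not prove Lemma \ref{linearcomp}; it is recalled from \cite{DR} without argument. Your sliding approach is the standard one for this type of statement and is almost certainly what \cite{DR} does.

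Two small points are worth tightening. First, you run the sliding in $\tilde{\,\cdot\,}$ coordinates and claim the infimum of $\tilde g-\tilde v-t^*$ is attained at some $X_0\in\overline{B}_{1/2}$; but $\tilde g$ may be multi-valued and is defined only on $B_{3/2}\setminus P$, so attainment (and the meaning of ``$X_0\in L$'' in your Case~2) is not immediate. It is cleaner to slide directly in the original variables: set $t^*=\sup\{t:v^t\le g\text{ on }\overline{B}_{3/2-\eps}\}$, which is finite and attained by continuity of $v,g$; then use \eqref{start} together with Lemma~\ref{elem} and the strict monotonicity of $v$ to get $v^{t^*}<v^\sigma\le g$ on a slightly smaller annulus, which forces the contact point $Y_0$ into $\overline{B}_{1/2+\eps}$. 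Your two cases become $v^{t^*}(Y_0)>0$ versus $v^{t^*}(Y_0)=0$, and the rest of your argument goes through unchanged. Second, in the free-boundary case you should note explicitly that $Y_0\in F(g)\cap F(v^{t^*})$: since $v^{t^*}\le g$ gives $B^+(v^{t^*})\subset B^+(g)$, and $Y_0$ is a limit of points where $v^t>g\ge 0$ as $t\downarrow t^*$, one has $Y_0\in\overline{B^+(v^{t^*})}\subset\overline{B^+(g)}$, so $Y_0$ lies on both free boundaries and the viscosity definition applies.
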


Finally, given a Lipschitz function $\phi$ defined on $B_{\lambda}(\bar X)$,  with values in $[-1,1]$, then  for all $\eps>0$ small there exists a unique function $\varphi_\eps$ defined at least on $B_{\lambda-\eps}(\bar X)$ such that \begin{equation}\label{deftilde2} U(X) = \varphi_\eps(X - \eps \phi (X)e_n), \quad X \in B_\lambda(\bar X), \end{equation} that is $$\tilde{\varphi_\eps}= \eps\phi.$$
Moreover such function $\varphi_\eps$ is increasing in the $e_n$-direction. 

If 
 $g$ satisfies the flatness assumption \eqref{flattilde} in $B_1$ and $\phi$ is as above then (say $\lambda<1/4$, $\bar X \in B_{1/2},$) 
\begin{equation}\label{gtildeg}\tilde \varphi_\eps \leq \tilde g \quad \text{in $B_\lambda(\bar X) \setminus P$} \Rightarrow \varphi_\eps \leq g \quad \text{in $B_{\lambda -\eps}(\bar X)$}.\end{equation}

The following Proposition will be used in the compactness argument for the proof of the improvement of flatness in Section 6.

\begin{prop}\label{1}Let $\phi$ be a smooth function in $B_\lambda(\bar X)\subset \R^{n+1} \setminus P$. Define (for $\eps>0$ small) the function $\varphi_\eps$ as above by\begin{equation}\label{deftilde3} U(X) = \varphi_\eps(X -  \eps\phi(X)e_n).\end{equation}Then, \begin{equation}\label{laplaceest} \Delta \varphi_\eps = \eps \Delta(U_n \phi)+ O(\eps^2),\quad \textrm{in $B_{\lambda/2}(\bar X)$}\end{equation} with the function in $O(\eps^2)$ depending on $\|\phi\|_{C^5}$ and $\lambda$.\end{prop}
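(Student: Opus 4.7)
The plan is to invert the defining relation and then compute $\Delta\varphi_\eps$ by a direct chain-rule expansion, exploiting the harmonicity of both $U$ and $U_n$ on $\R^{n+1}\setminus P$ and tracking the remainders carefully.

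First I would use the implicit function theorem to define $\psi(Y,\eps):=\phi(X)$ where $X=Y+\eps\phi(X)e_n$. For $\eps$ small (depending on $\|\phi\|_{C^1}$ and $\lambda$) this produces a smooth $\psi(\cdot,\eps)$ on $B_{\lambda/2}(\bar X)$ with $\psi-\phi=O(\eps)$ in any $C^k$ norm, constants depending on $\|\phi\|_{C^{k+1}}$. Setting $Z:=Y+\eps\psi(Y,\eps)e_n$, the defining relation \eqref{deftilde3} just reads $\varphi_\eps(Y)=U(Z)$.

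Next I would compute two derivatives by the chain rule. Since $\partial_{Y_i}Z_j=\delta_{ij}+\eps\psi_i\delta_{jn}$,
\[
\partial_{Y_i}\partial_{Y_j}\varphi_\eps=U_{ij}(Z)+\eps\bigl[\psi_i U_{jn}(Z)+\psi_j U_{in}(Z)+\psi_{ij}U_n(Z)\bigr]+\eps^2\psi_i\psi_j U_{nn}(Z).
\]
Summing $i=j$ and using $\Delta U=0$ on $B_\lambda(\bar X)\subset\R^{n+1}\setminus P$ kills the order-$1$ contribution, so
\[
\Delta_Y\varphi_\eps=\eps\bigl[\,2\nabla\psi\cdot(\nabla U_n)(Z)+\Delta\psi\cdot U_n(Z)\,\bigr]+O(\eps^2),
\]
with the $O(\eps^2)$ remainder controlled by $\|\psi\|_{C^1}^2\|U_{nn}\|_{L^\infty}$ near $\bar X$.

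The third step is to reassemble the bracket as a genuine $Y$-Laplacian modulo $O(\eps)$. A product-rule expansion of $\Delta_Y[\psi(Y)\,U_n(Z(Y))]$, combined with $\nabla_Y[U_n(Z)]=(\nabla U_n)(Z)+\eps U_{nn}(Z)\nabla\psi$ and $\Delta_Y[U_n(Z)]=(\Delta U_n)(Z)+O(\eps)=O(\eps)$ (harmonicity of $U_n$ off $P$), shows that the bracket equals $\Delta_Y[\psi(Y)\,U_n(Z(Y))]+O(\eps)$. Hence $\Delta\varphi_\eps=\eps\,\Delta_Y[\psi(Y)\,U_n(Z(Y))]+O(\eps^2)$. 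Since $\psi-\phi=O(\eps)$ and $U_n(Z)-U_n(Y)=O(\eps)$ both in $C^2(B_{\lambda/2}(\bar X))$, replacing $\psi(Y)U_n(Z(Y))$ by $\phi(Y)U_n(Y)$ changes the argument of $\Delta_Y$ by $O(\eps)$ in $C^2$, producing the claimed identity $\Delta\varphi_\eps=\eps\,\Delta(U_n\phi)+O(\eps^2)$.

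The main obstacle is this last bookkeeping step: the error must be $O(\eps^2)$ in $L^\infty$ \emph{after} applying $\Delta_Y$, so every chain-rule expansion and every Taylor remainder in $\eps$ has to be uniformly controlled in $C^2(Y)$. Two differentiations combined with the first-order $\eps$-Taylor of $\psi$ and of $U_n(Z)$ consume three extra derivatives of $\phi$ beyond the two already used by $\Delta$, which is precisely where the $C^5$ assumption enters the constant.
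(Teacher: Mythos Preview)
Your argument is correct. The paper, however, takes a shorter route: rather than inverting the map $X\mapsto Y=X-\eps\phi(X)e_n$ and computing $\Delta_Y\varphi_\eps$ via the chain rule, it Taylor-expands the defining relation directly in the $e_n$ direction,
\[
U(X)=\varphi_\eps(X)-\eps\,\varphi_{\eps,n}(X)\,\phi(X)+\eps^2\Psi(X),\qquad \|\Psi\|_{C^3}\le C(\|\phi\|_{C^5},\lambda),
\]
differentiates once to get $\varphi_{\eps,n}=U_n+O(\eps)$, substitutes back to obtain $\varphi_\eps=U+\eps\,U_n\phi+O(\eps^2)$ with the $O(\eps^2)$ controlled in $C^3$, and then simply applies $\Delta$ (using $\Delta U=0$). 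Your approach---which, incidentally, is essentially the computation the authors wrote out and then commented out in the \LaTeX\ source---is more explicit about the chain-rule structure and isolates the role of the harmonicity of $U_n$, at the price of carrying the auxiliary objects $\psi$ and $Z$ through several substitutions. The paper's version is quicker because it never changes variable: the single $C^3$ bound on the Taylor remainder $\Psi$ absorbs the two derivatives from $\Delta$ in one stroke, so no separate ``reassembly'' step is needed.
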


\begin{proof} For notional simplicity we drop the subindex $\eps$ in the definition of $\varphi_\eps.$
From formula \eqref{deftilde3} and Taylor's theorem, we have that 
\begin{equation}\label{Taylor}U(X) =\varphi(X) - \eps \varphi_n(X)\phi(X) +\eps^2\Psi(X), \quad \textrm{in $B_{\lambda/2}(\bar X)$}
\end{equation}
with $\|\Psi\|_{C^3(B_{\lambda/2}(\bar X))} \leq C$
and $C$ depending on $\|\phi\|_{C^5}$ and $\lambda$. Thus, $$U_n(X) = \varphi_n(X) + O(\eps).$$ Combining this formula for $\varphi_n(X)$ and \eqref{Taylor} we obtain
\begin{equation*}U(X) =\varphi(X) - \eps U_n(X)\phi(X) +O(\eps^2).\end{equation*}
Hence, using that $U$ is harmonic,
\begin{equation*}0 = \Delta U(X) =\Delta \varphi(X) - \Delta(\eps U_n\phi)(X) +O(\eps^2),\end{equation*} as desired.
\end{proof}

We remark that in fact the function in $O(\eps^2)$ only depends on $\lambda$ if we choose $\eps$ small enough depending on $\|\phi\|_{C^5}$.

\subsection{The linearized problem.}  

We recall here the linearized problem associated to \eqref{FB}.  Here and later $U_n$ denotes the $x_n$-derivative of the function $U$. Recall that  $$P:= \{X \in \R^{n+1} : x_n \leq 0, s=0\}, \quad L:= \{X \in \R^{n+1}: x_n=0, s=0\}.$$



Given  $h \in C(B_1)$  and  $X_0=(x'_0,0,0) \in B_1 \cap L,$ we call
$$|\nabla_r h |(X_0) := \di\lim_{(x_n,s)\rightarrow (0,0)} \frac{h(x'_0,x_n, s) - h (x'_0,0,0)}{r}, \quad   r^2=x_n^2+s^2 .$$
Once the change of unknowns  \eqref{deftilde} has been done, the linearized problem associated to \eqref{FB} is 
\begin{equation}\label{linear}\begin{cases} \Delta (U_n h) = 0, \quad \text{in $B_1 \setminus P,$}\\ |\nabla_r h|=0, \quad \text{on $B_1\cap L$.}\end{cases}\end{equation}




\begin{defn}\label{linearsol}We say that $h$ is a solution to \eqref{linear}  if $h \in C(B_1)$, $h$ is even with respect to $\{s=0\}$ and it satisfies
\begin{enumerate}\item $\Delta (U_n h) = 0$ \quad in $B_1 \setminus P$;\\ \item $h$ cannot be touched by below (resp. by above) at any  $X_0=(x'_0,0,0) \in B_1 \cap L$, by a continuous function $\phi$ which satisfy $$\phi(X) = \phi(X_0) + a(X_0)\cdot (x' - x'_0) + b(X_0) r + O(|x'-x'_0|^2 + r^{3/2}), $$ with $b(X_0) >0$ (resp. $b(x_0) <0$). \end{enumerate}\end{defn}

In Section 6, we will prove a quadratic expansion for solutions to the linearized problem which yields the following corollary.

\begin{cor}\label{classnewcor} Let  $h$ be a solution to \eqref{linear} such that $|h|  \leq 1$. Given any $\alpha \in (0,1)$, there exists $\eta_0$ depending on $\alpha$ , such that $h$ satisfies
\begin{equation*}|h(X) - (h(0) +  \xi_0 \cdot x' + \frac 1 2 (x')^TM_0x'  - \frac{a_0}{2}r^2 -b_0rx_n)| \leq \frac 1 4 \eta_0^{2+\alpha} \quad \textrm{in $B_{\eta_0},$}\end{equation*}  with $r^2=x_n^2+ s^2$, for some $a_0,b_0 \in \R, \xi_0 \in \R^{n-1}, M_0 \in S^{(n-1) \times (n-1)}$ with $$|\xi_0|, |a_0|,  |b_0|, \|M_0\| \leq C, \quad \textrm{$C$ universal}$$ and $$ a_0 +b_0 -tr M_0=0.$$ \end{cor}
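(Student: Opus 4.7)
The plan is to derive the corollary as the quantitative restatement of the qualitative quadratic expansion theorem for solutions of \eqref{linear} that Section~6 establishes (the one explicitly advertised just above the corollary). The content of the corollary is that this expansion has a specific \emph{admissible} form and that a single scale $B_{\eta_0}$ can be chosen so that the error is $\tfrac{1}{4}\eta_0^{2+\alpha}$.

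The preliminary computation is to identify the class of admissible quadratic polynomials for $\Delta(U_n h)=0$. For
$$Q(X)=c+\xi\cdot x' + \tfrac{1}{2}(x')^T M x' - \tfrac{a}{2} r^2 - b\, r x_n, \qquad r^2=x_n^2+s^2,$$
one checks that $\Delta(U_n Q)=(\mathrm{tr}\,M-a-b)\,U_n$ on $B_1\setminus P$. Three ingredients enter: $U_n$ is harmonic in $\R^{n+1}\setminus P$ and independent of $x'$, so the $c$, $\xi\cdot x'$, and $\tfrac12 (x')^T M x'$ terms together contribute only $U_n\,\mathrm{tr}\,M$; $U_n$ has degree $-1/2$ homogeneity in $(x_n,s)$ and $\Delta r^2=4$ yield $\Delta(U_n r^2)=2U_n$; finally, writing in polar $(r,\theta)$ on the $(x_n,s)$-plane,
$$U_n\cdot r x_n=\tfrac{1}{4}r^{3/2}\cos(3\theta/2)+\tfrac{1}{4}r^{3/2}\cos(\theta/2),$$
whose first summand is the harmonic $\mathrm{Re}\,z^{3/2}$ and whose second has Laplacian $U_n$, since $\Delta(r^{3/2}\cos(\theta/2))=4U_n$ by a short polar calculation. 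Consequently, the admissible quadratics are exactly those satisfying $a+b=\mathrm{tr}\,M$, explaining the compatibility $a_0+b_0-\mathrm{tr}\,M_0=0$ in the statement.

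With this class in hand, the proof reads as follows. Apply the Section~6 expansion theorem at $X_0=0$ to get $h(X)-Q_0(X)=o(|X|^2)$ for some admissible $Q_0$ with coefficients controlled by $\|h\|_{L^\infty}\leq 1$ (the bound on the coefficients is a standard consequence of the linearity of \eqref{linear} via rescaling). The form of $Q_0$ is forced: (i) the boundary condition $|\nabla_r h|=0$ on $L$ rules out pure $r$ and $r x'_i$ cross-terms in the second-order part; (ii) the interior equation $\Delta(U_n h)=0$ enforces the compatibility as just shown. Finally, given $\alpha\in(0,1)$, choose $\eta_0$ so small that the $o(\eta_0^2)$ remainder is bounded by $\tfrac{1}{4}\eta_0^{2+\alpha}$; this delivers the desired estimate in $B_{\eta_0}$.

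The main obstacle is of course the underlying Section~6 expansion theorem itself: the equation $\Delta(U_n h)=0$ degenerates at the thin set $L$ (where $U_n$ has an $r^{-1/2}$ singularity), so classical Schauder arguments do not apply directly. The natural approach, paralleling the machinery of \cite{DR} and \cite{S} for the nonlinear problem, is a linear dichotomy iteration: at each dyadic scale, either $h$ is well approximated by some admissible $Q$, or one can rescale to gain a fixed factor of flatness. The core technical step is the classification of homogeneous solutions of degree $\leq 2$ of the linearized problem to verify that they all lie in the admissible class identified above, so that the iteration is unobstructed and ultimately yields the quadratic expansion used here.
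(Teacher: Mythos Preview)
Your reduction of the corollary to the Section~6 expansion theorem is the right idea, and your identification of the admissible quadratic class via $\Delta(U_nQ)=(\operatorname{tr}M-a-b)U_n$ is correct and well computed.  There is, however, a genuine gap in the final step.  You invoke the expansion only as $h(X)-Q_0(X)=o(|X|^2)$ and then ``choose $\eta_0$ so small that the $o(\eta_0^2)$ remainder is bounded by $\tfrac14\eta_0^{2+\alpha}$.''  A bare $o(|X|^2)$ gives an $\eta_0$ that may depend on the particular solution $h$, whereas the corollary requires $\eta_0$ depending only on $\alpha$ (and $n$).  The paper's Theorem~\ref{class} is in fact quantitative: it yields
\[
|h(X)-Q_0(X)|\le C|X|^{3}\qquad\text{with $C$ universal,}
\]
and from this the corollary is immediate by taking $\eta_0^{1-\alpha}\le (4C)^{-1}$.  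So the corollary is not a ``quantitative restatement of a qualitative theorem''; it is a trivial consequence of an already quantitative one.

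Your sketch for the underlying expansion theorem also diverges from the paper.  You propose a linear improvement-of-flatness iteration with a classification of homogeneous solutions.  The paper instead argues directly: it uses the results of \cite{DR} to know that $h$ and all its $x'$-derivatives are H\"older up to $P$; then for each fixed $x'$ it rewrites the equation as the two-dimensional problem $\Delta_{x_n,s}(U_n h)=-U_n\,\Delta_{x'}h$ on the slit disk, applies the conformal map $z\mapsto \tfrac12 z^2$ to send the slit disk to a half-disk (so that $H=U_t h$ solves $\Delta\tilde H=\zeta\tilde f$ with $\tilde H=0$ on $\{\zeta=0\}$), and reads off the expansion from the ordinary Taylor polynomial of $\tilde H$ after odd reflection.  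This gives the sharp $C|X|^3$ remainder with universal $C$ in one stroke, without any iteration, and the relation $a_0+b_0=\operatorname{tr}M_0$ drops out of the identity $6d_1+2d_2=\Delta_{x'}h(x',0,0)$ for the Taylor coefficients.  Your iterative route could presumably be made to work and would yield $C|X|^{2+\beta}$ for some $\beta>0$, which also suffices; the conformal-map argument is shorter and gives the optimal exponent.
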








\section{A family of functions.}

In this section we introduce a family of functions $V_{\mathcal S, a, b}$ which approximate our solution quadratically. These functions will be often used as comparison subsolutions/supersolutions.
We establish here some of their basic properties, including their behavior under the change of coordinates $V \rightarrow \tilde V$ (see Proposition \ref{estimate}).

 We start by presenting some basic properties of the solution $U$ defined in the introduction. Recall that $$U(t,s) := \rho^{1/2}\cos \frac{\beta}{2},$$
  where 
$$t= \rho\cos\beta, \quad  s=\rho\sin\beta, \quad \rho\geq 0, \quad -\pi \leq  \beta \leq \pi.$$ 
 
 We will use the following properties of the function $U$:
 \begin{enumerate}
 \item $\Delta U = 0, \quad U>0 \quad \textrm{in $\R^{n+1} \setminus P.$}$ \\
 \item $U_t= \frac 1 2  \rho^{-1/2}\cos \frac{\beta}{2}= \dfrac {1}{2\rho} U$ and $U_t > 0$ in $\R^{n+1} \setminus P.$\\
 \end{enumerate}

 Since $U_t$ is positive harmonic in $\R ^2 \setminus \{(t,0), \quad t \le 0 \} $, homogenous of
degree $-1/2$ and vanishes continuously on $\{(t,0), \quad t < 0 \} $ one can see from boundary Harnack inequality (or by direct computation) that values of $U_t$ at nearby points with the same second coordinate are comparable in diadic rings. Precisely we have
\begin{equation}\label{diadic}\frac{U_t(t_1,s)}{U_t(t_2,s)} \le C \quad \textrm{if} \quad |t_1-t_2| \le \frac 1 2 |(t_2,s)|.\end{equation}

Next we introduce the family $V_{\mathcal S, a,b}$. For any $a,b \in \R$ we define the following family of (two-dimensional) functions (given in polar coordinates $(\rho, \beta)$)
\begin{equation}
v_{a,b}(t,s):= (1+\frac{a}{4}\rho+\frac{b}{2}t)\rho^{1/2}\cos \frac{\beta}{2},
\end{equation}
that is 
 $$v_{a,b}(t,s) = (1+\frac{a}{4}\rho+\frac{b}{2}t)U(t,s) = U(t,s) + o(\rho^{1/2}).$$

Given a surface $\mathcal S= \{x_n = h(x')\} \subset \R^n$,  we call $\mathcal P_{\mathcal S,X}$ the 2D plane passing through $X=(x,s)$ and perpendicular to $\mathcal S$,
that is the plane containing $X$ and generated by the $s$-direction and the normal direction from $(x,0)$ to $\mathcal S$.

We define the family of functions 
\begin{equation}\label{vS}
V_{\mathcal S, a,b} (X): = v_{a,b}(t,s), \quad X=(x,s),
\end{equation}
with $t=\rho\cos \beta, s=\rho\sin \beta$ respectively the first and second coordinate of $X$ in the plane $\mathcal P_{\mathcal S,X}$. In other words, $t$ is the signed distance from $x$ to $\mathcal S$ (positive above $\mathcal S$ in the $x_n$-direction.)

If $$\mathcal S:= \{ x_n = \frac 1 2 (x')^T M x' + \xi' \cdot x'\},$$ for some $M \in S^{(n-1) \times (n-1)}, \xi' \in \R^{n-1}$  we use the notation \begin{equation}\label{vM}
 V_{M,\xi', a,b}(X):= V_{\mathcal S, a,b} (X). \end{equation}
This will be the case  throughout most of the paper.

\begin{defn}\label{assonV} For $\delta >0$ small, we define the following classes of functions $$\mathcal{V}_\delta: = \{V_{M,\xi', a,b} : \ 
\|M\|, |\xi'|, |a|, |b| \leq \delta\},$$ and $$\mathcal V_\delta^0:= \{V_{M,\xi', a,b} \in \mathcal V_\delta : \  a+b- tr M=0\}.$$\end{defn} Most of the times we will work with functions in the class $\mathcal{V}_\delta$, since we deal with the flat  case. 
Notice that if we rescale $V = V_{M,\xi', a. b}$ that is $$V_\lambda(X)=\lambda^{-1/2}V(\lambda X), \quad X\in B_1, $$ then it easily follows  from our definition that $$V_\lambda= V_{\lambda M,\xi', \lambda a, \lambda b}.$$

In the next proposition we provide a condition for a function  $V \in \mathcal V_\delta$ to be a subsolution/supersolution.

\begin{prop}\label{sub} Let $V=V_{M,\xi',a,b} \in \mathcal V_\delta,$  with $\delta \leq \delta_0$ universal. There exists a universal constant $C_0>0$ such that if \begin{equation}\label{Vsub}a+b- tr M \geq C_0 \delta^2\end{equation} then  $V$ is a comparison subsolution to \eqref{FB} in $B_2$. 
\end{prop}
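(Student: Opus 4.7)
My plan is to directly check the three conditions defining a (strict) comparison subsolution in Definition \ref{defsub}. The construction handles non-negativity, evenness in $s$, and $C^2$-smoothness on $B_2^+(V)$ for free: $V$ is smooth except possibly on the edge $\mathcal S\cap\{s=0\}$, and that edge is contained in the zero set $P_{\mathcal S}$ of $V$ and hence excluded from $B_2^+(V)$. The free boundary $F(V)=\mathcal S\cap\mathcal B_2$ is $C^\infty$ by construction. So the real content reduces to (a) computing the asymptotic coefficient $\alpha(x_0)$ along $F(V)$ and (b) controlling the sign of $\Delta V$ on $B_2^+(V)$.

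For (a), at $x_0\in\mathcal S$ the unit normal $\nu(x_0)$ is, in the plane $\mathcal P_{\mathcal S,x_0}$, exactly the positive $t$-direction, so
\[
V(x_0+t\nu(x_0),0)=v_{a,b}(t,0)=\bigl(1+\tfrac{a+2b}{4}t\bigr)\sqrt t,
\]
which gives $\alpha(x_0)\equiv 1$. This settles (ii) of Definition \ref{defsub} and reduces (iii) to showing $V$ is not harmonic on $B_2^+(V)$, which will follow from (i) once established with strict inequality.

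For (b), I would write $V(X)=v_{a,b}(d(x),s)$, where $d$ denotes the signed distance to $\mathcal S$, and use $|\nabla d|\equiv 1$ to obtain
\[
\Delta V=\bigl(\Delta_{(t,s)}v_{a,b}\bigr)(d(x),s)+v_{a,b,t}(d(x),s)\,\Delta d(x).
\]
A brief polar computation based on the identities $\Delta(\rho^{3/2}\cos(\beta/2))=4U_t$ and $\Delta(\rho^{3/2}\cos(3\beta/2))=0$, together with $tU=\tfrac12\rho^{3/2}(\cos(\beta/2)+\cos(3\beta/2))$, yields $\Delta_{(t,s)}v_{a,b}=(a+b)U_t$. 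The standard formula for the signed distance to a quadratic surface with coefficients bounded by $\delta$ then gives $\Delta d=-tr\,M+O(\delta^2)$ throughout $B_2$.

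The one step that I expect to require genuine care is controlling the product $v_{a,b,t}\,\Delta d$ uniformly up to $P$. A crude estimate $v_{a,b,t}=U_t+O(\delta\rho^{1/2})$ leaves an additive error $O(\delta^2\rho^{1/2})$ that cannot be absorbed into $(a+b-tr\,M)\,U_t$ near $P$, since $U_t\sim\rho^{-1/2}\cos(\beta/2)$ vanishes there while $\rho^{1/2}$ does not. The way around this is to use the factored form $v_{a,b}=U\bigl(1+\tfrac a4\rho+\tfrac b2 t\bigr)$ and differentiate,
\[
v_{a,b,t}=U_t\bigl(1+\tfrac a4\rho+\tfrac b2 t\bigr)+U\bigl(\tfrac a4\cos\beta+\tfrac b2\bigr),
\]
and then invoke the identity $U=2\rho U_t$ from the second property of $U$ to rewrite the second term as $U_t\cdot O(\delta)$. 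This gives the uniform $v_{a,b,t}=U_t(1+O(\delta))$ in $B_2$, and combining with the previous pieces,
\[
\Delta V=U_t\bigl[(a+b-tr\,M)+O(\delta^2)\bigr].
\]
Choosing $C_0$ universal and larger than the implicit constant in $O(\delta^2)$, the hypothesis $a+b-tr\,M\ge C_0\delta^2$ produces $\Delta V\ge\tfrac{C_0}{2}\delta^2\,U_t>0$ throughout $B_2^+(V)$, which gives (i) and, since $V$ is strictly subharmonic hence not harmonic, (iii).
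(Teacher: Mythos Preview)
Your proof is correct and follows essentially the same route as the paper: verify $\alpha(x_0)\equiv 1$ from the formula for $v_{a,b}$, compute $\Delta V=\Delta_{(t,s)}v_{a,b}+(\partial_t v_{a,b})\Delta d$, obtain $\Delta_{(t,s)}v_{a,b}=(a+b)U_t$, and combine the curvature estimate $\Delta d=-\operatorname{tr}M+O(\delta^2)$ with the uniform bound $\partial_t v_{a,b}=U_t(1+O(\delta))$ to reach $\Delta V=U_t\bigl[(a+b-\operatorname{tr}M)+O(\delta^2)\bigr]$. The paper writes the key estimate \eqref{ptv} in one line as $|\partial_t v_{a,b}-U_t|\le(|a|+|b|)\rho^{1/2}\cos(\beta/2)\le 8\delta U_t$; your factored differentiation together with $U=2\rho U_t$ is the same computation spelled out, and your remark about why the naive $O(\delta\rho^{1/2})$ bound would fail near $P$ is a useful clarification of why the angular factor matters.
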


\begin{proof} Clearly from our formula for $v_{a,b}$ the function $V$ satisfies the free boundary condition of Definition \ref{defsub} with $\alpha(x_0) \equiv 1$. We need to check that $\Delta V(X) >0$ at all  $X \in B_2^+(V).$ 

Since that $V(X)$ depends only on $(t,s)$ and 
$$\Delta_x t= -\kappa(x)$$
where $\kappa(x)$ is the sum of the principal curvatures of the parallel surface to $S$ (in $\R^n$) 
passing through $x,$
 we compute that 
\begin{equation}\label{deltaV}\Delta V(X) = \Delta_{(t,s)} v_{a,b} - (\p_t v_{a,b}) \kappa(x).\end{equation}

From our formula for $v_{a,b}$, using polar coordinates we get that\begin{equation}\label{deltav}\Delta_{(t,s)} v_{a,b} = \frac 1 2 (a+b) \rho^{-1/2}\cos \frac{\beta}{2} = (a+b)U_t. \end{equation} 

Also, since $\rho \leq 2,$
\begin{equation}\label{ptv}|\p_t v_{a,b} -  U_t| \leq (|a| + |b|) \rho^{1/2}\cos \frac{\beta}{2} \leq 8 \delta  U_t.\end{equation}
Finally we use that $\kappa_i(x)$ the principal curvatures at $x$ are given by,
\begin{equation}\label{kappa}\kappa_i(x) = \frac{\kappa_i(x^*)}{1-t\kappa_i(x^*)}\end{equation} where $x^*$ is the projection of $x$ onto $S$.  Since $|\xi'|, \|M\| \leq \delta$ we obtain that $$|\kappa_i(x^*)| \leq C\delta, \quad 
|\kappa(x^*) - tr M| \leq C \delta^3$$ for $C$ universal, which in view of \eqref{kappa} give
\begin{equation}\label{curvatures} |\kappa(x) - tr M| \leq C \delta^2.\end{equation}
From \eqref{deltaV} combined with \eqref{deltav}, \eqref{ptv} and \eqref{curvatures} we get that
\begin{equation}\label{need}|\Delta V(X) - (a+b-tr M)U_t | \leq \frac 1 2 C_0\delta^2 U_t \end{equation} for a $C_0$ universal.
It follows that if $$a+b -tr M \geq  C_0 \delta^2$$ then $\Delta V(X) > 0$ as desired.

\end{proof}

Next, we estimate $V_n$ and $\Delta V$ outside a small cone with axis $L$. 

\begin{prop}Let  $V= V_{M,\xi', a, b} \in \mathcal V_\delta$ with $\delta \leq \delta_0$ universal, then
\begin{equation} \label{concl1}c \leq \frac{V_n}{U_n}\leq C, \quad \textrm{in $B_2 \setminus (P\cup \{|(x_n,s)| \leq 10\delta|x'|\}).$}\end{equation}
If $V \in \mathcal V_\delta^0$ then  \begin{equation}\label{estun}|\Delta V(X)| \leq C\delta^2U_n(X) \quad \textrm{in $B_2 \setminus (P\cup \{|(x_n,s)| \leq 10\delta|x'|\}).$}\end{equation} 
\end{prop}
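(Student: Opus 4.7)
The plan is to reduce both estimates to one geometric observation: outside the cone $\{|(x_n,s)| \leq 10\delta |x'|\}$, the ``intrinsic'' coordinate $t$ (the signed distance to $\mathcal S$) stays close enough to the ``ambient'' coordinate $x_n$ that the diadic property \eqref{diadic} forces $U_t(t,s)$ and $U_t(x_n,s) = U_n(X)$ to be comparable. Since $\mathcal S$ is the graph $\{x_n = \phi(x')\}$ with $\phi(x') = \tfrac12 (x')^T M x' + \xi'\cdot x'$ and $|\xi'|, \|M\| \leq \delta$, in $B_2$ we have $|\phi(x')| \leq C\delta |x'|$, and the signed distance differs from the vertical deviation $x_n - \phi(x')$ only by a correction of order $\delta^2|x'|$. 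Thus
\begin{equation*}
|t(x) - x_n| \leq C\delta |x'| \quad \text{in } B_2.
\end{equation*}
The hypothesis $|(x_n,s)| \geq 10\delta |x'|$ then yields $|t - x_n| \leq \tfrac12 |(x_n,s)|$, so \eqref{diadic} gives
\begin{equation}\label{plan:diadic}
c\, U_n(X) \ \leq\ U_t(t,s) \ \leq\ C\, U_n(X).
\end{equation}
A short check (using $t \geq x_n - C\delta|x'| > 0$ when $s=0$, $X\notin P$) ensures that $(t,s)$ also lies off $P$, so these quantities are strictly positive.

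For \eqref{concl1}, the chain rule gives $V_n(X) = (\partial_t v_{a,b})(t,s) \cdot \partial_{x_n} t(x)$. The gradient of the signed distance function at $x$ equals the unit normal of $\mathcal S$ at the projection $x^* \in \mathcal S$; since $|\nabla \phi| \leq C\delta$ on $\mathcal S\cap B_2$, the $n$-th component of this normal satisfies $\partial_{x_n} t \in [1 - C\delta^2, 1]$. Combining this with the bound \eqref{ptv}, namely $|\partial_t v_{a,b} - U_t(t,s)| \leq 8\delta\, U_t(t,s)$, and the two-sided comparison \eqref{plan:diadic}, we immediately obtain
\begin{equation*}
c \ \leq\ \frac{V_n(X)}{U_n(X)} \ \leq\ C
\end{equation*}
on the required set.

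For \eqref{estun}, I would recycle the key inequality \eqref{need} produced in the proof of Proposition~\ref{sub}, which reads
\begin{equation*}
\bigl|\Delta V(X) - (a+b-\operatorname{tr} M)\,U_t(t,s)\bigr| \ \leq\ \tfrac12 C_0\,\delta^2\, U_t(t,s)
\end{equation*}
throughout $B_2^+(V)$ (which contains our set, as $t > 0$ when $s=0$). When $V \in \mathcal V_\delta^0$ the first term vanishes, leaving $|\Delta V(X)| \leq \tfrac12 C_0 \delta^2 U_t(t,s)$, and applying \eqref{plan:diadic} upgrades the right-hand side to $C\delta^2 U_n(X)$. The main obstacle is the first step: establishing $|t - x_n| \leq C\delta |x'|$ with the right quantitative dependence, and tracking that the cone condition is tight enough to (i) keep $(t,s)$ outside $P$ and (ii) trigger \eqref{diadic}. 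Everything else is algebra that has already been executed inside the proof of Proposition~\ref{sub}.
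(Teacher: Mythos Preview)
Your approach is essentially identical to the paper's: compute $V_n$ via the chain rule, use \eqref{ptv} to compare $\partial_t v_{a,b}$ with $U_t(t,s)$, then invoke \eqref{diadic} to swap $U_t(t,s)$ for $U_t(x_n,s)=U_n(X)$, and finally recycle \eqref{need} for the Laplacian estimate. One small slip: the displayed bound $|t-x_n|\le C\delta|x'|$ is not literally true (take $x'=0$, $\xi'\neq 0$, $x_n>0$, where $|t-x_n|\sim \delta^2|x_n|$); the paper instead uses the coarser but correct $|t-x_n|\le 4\delta|x|$, and then observes that the cone condition $|(x_n,s)|\ge 10\delta|x'|$ forces $|(x_n,s)|\ge 8\delta|x|\ge 2|t-x_n|$, which is exactly what \eqref{diadic} needs.
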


\begin{proof} From our formula
$$V_n(X)=\p_t v_{a,b}(t,s) \frac{\p t}{\p x_n}$$ where $t$ represents the signed distance from $x$ to $\mathcal S.$ Since $\nabla_x t$ is the unit vector at $x$ that has the direction of the normal from $x$ to $\mathcal S$, it makes an angle of order $\delta$ with respect to $e_n$. Hence since $$\frac{\p t}{\p x_n} = \nabla_x t \cdot e_n$$ we get 
\begin{equation}\label{ptxn}
1 \geq \frac{\p t }{\p x_n} \geq  1- C \delta^2   
\end{equation}
 and we obtain
$$\p_t v_{a,b}(t,s)\ge V_n(X) \ge \frac 12 \p_t v_{a,b}(t,s).$$ 
From \eqref{ptv} we see that $\p_t v_{a,b} \sim U_t$
and we obtain that \begin{equation}\label{utcovt}
2 \p_t U(t,s) \ge V_n(X)\ge \frac 1 4 \p_t U(t,s).
\end{equation}
Thus to obtain our claim we need to replace $t$ with $x_n$ in the inequality above.

Since in $B_{2|x|}$ the surface $\mathcal S$ is in a $4 \delta|x|$ neighborhood of $x_n=0$ we find that
$|t-x_n| \le 4 \delta|x|$. If $X$  belongs to the domain in \eqref{concl1} then $$|(x_n,s)| \ge 8 \delta|x| \geq 2 |t-x_n| $$ and we obtain from \eqref{diadic}
\begin{equation}\label{cvu}c \le \frac{U_t(t,s)}{U_t(x_n,s)} \le C\end{equation} which together with \eqref{utcovt} gives the desired conclusion \eqref{concl1}.  

Now \eqref{estun} follows immediately.  Indeed by formula \eqref{need} we have that 
$$|\Delta V(X)| \leq C \delta^2 U_t(t,s),$$
which combined with \eqref{cvu} gives the desired bound.
\end{proof}

\begin{rem}\label{remark}We remark that if $V \in \mathcal V_\delta^0$, then the rescaling $V_\lambda (X)= \lambda^{-1/2} V(\lambda X)$ with $\lambda \le 1$, satisfies 
 \begin{equation}\label{estun2} c \leq \frac{(V_\lambda)_n}{U_n} \leq C,  \quad |\Delta V_\lambda(X)| \leq C\delta^2U_n(X), \end{equation} in the dilation ball of factor $1/\lambda$ $$B_{2/\lambda} \setminus (P\cup \{|(x_n,s)| \leq 10\delta|x'|\}).$$

Indeed $$\Delta V_\lambda(X)=\lambda^{3/2} \Delta V(\lambda X), \quad U_n(X) = \lambda^{1/2} U_n(\lambda X), \quad (V_\lambda)_n(X) = \lambda^{1/2} V_n(\lambda X).$$
\end{rem}

Now we study the behavior of $V \in \mathcal V_\delta$ under the transformation $V \rightarrow \tilde V.$ This will be quite useful in the rest of the paper.

\begin{prop}\label{estimate} Let $V=V_{M,\xi',a,b} \in \mathcal V_\delta$, with $\delta \leq \delta_0$ universal. Then $V$ is strictly monotone increasing in the $e_n$-direction in $B_2^+(V)$. 
Moreover, $\tilde V$ satisfies the following estimate in $B_{2} \setminus P$ 
\begin{equation*}
|\tilde V(X) - \gamma_V (X)| \leq C_1 \delta^2, \quad \gamma_V (X)= \frac{a}{2}r^2+brx_n-\frac 1 2 (x')^T M x' -\xi' \cdot x'
\end{equation*} with  $r=\sqrt{x_n^2+s^2}$ and $C_1$ a universal constant.
\end{prop}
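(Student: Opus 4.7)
My plan is to prove the two claims separately. Strict monotonicity of $V$ in the $e_n$-direction throughout $B_2^+(V)$ follows directly from the chain rule $V_n(X) = \partial_t v_{a,b}(t,s) \cdot \partial_{x_n} t$ combined with positivity of both factors: by \eqref{ptv} one has $\partial_t v_{a,b}(t,s) \geq (1-8\delta) U_t(t,s) > 0$ on $B_2^+(V)$ for $\delta_0$ small, and by \eqref{ptxn} one has $\partial_{x_n} t \geq 1 - C\delta^2 > 0$. Both bounds have already been worked out in the proofs of Propositions \ref{sub} and the preceding one.

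For the estimate on $\tilde V$, I fix $X \in B_2 \setminus P$ and set $\gamma := \gamma_V(X)$. The strategy is to show that the shift $w=\gamma$ approximately solves the defining equation $V(X - w e_n) = U(X)$ with residual of order $\delta^2$, and then invert via the monotonicity above to conclude $|\tilde V(X) - \gamma| \leq C_1 \delta^2$. The first ingredient is a standard Taylor expansion of the signed distance to the $C^2$ graph $\mathcal{S} = \{x_n = h(x')\}$, where $h(x') = \frac{1}{2}(x')^T M x' + \xi' \cdot x'$ satisfies $\|h\|_{C^2(B_2)} \leq C\delta$. This gives uniformly
\begin{equation*}
t(x', x_n - \gamma) = (x_n - \gamma) - h(x') + O(\delta^2),
\end{equation*}
and substituting the explicit $\gamma$ the $h$-terms cancel, leaving $t(x', x_n - \gamma) = x_n + \sigma_0$ with $\sigma_0 = -\frac{a}{2}r^2 - brx_n + O(\delta^2)$.

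The key step is to evaluate $v_{a,b}(x_n+\sigma_0, s) - U(X)$. I would split
\begin{equation*}
v_{a,b}(x_n+\sigma_0, s) - U(X) = \Bigl(\tfrac{a}{4}\rho + \tfrac{b}{2}t\Bigr)U(t,s) + \bigl[U(x_n+\sigma_0, s) - U(X)\bigr],
\end{equation*}
use $U_t = U/(2\rho)$ to linearize the shift as $U(x_n+\sigma_0,s) - U(X) = \sigma_0\, U(X)/(2r) + O(\delta^2)$, and observe that the specific value of $\sigma_0$ makes the leading $O(\delta)$ contributions of the two pieces cancel exactly: $\sigma_0 U(X)/(2r)$ produces $-(\tfrac{a}{4}r + \tfrac{b}{2}x_n)U(X)$, which is the opposite of the multiplicative correction $(\tfrac{a}{4}\rho + \tfrac{b}{2}t)U(t,s)$ to leading order. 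This cancellation is precisely the algebraic motivation behind the definition of $\gamma_V$. For cleaner bookkeeping the same computation can be carried out at the level of squares via the identity $U(t,s)^2 = (t+\rho)/2$, reducing the verification to the polynomial identity $(t+\rho)\bigl(1 + \tfrac{a}{2}\rho + bt\bigr) = (x_n + r) + O(\delta^2)$.

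Finally, strict monotonicity in $e_n$ converts the estimate $V(X - \gamma e_n) - U(X) = O(\delta^2)\cdot V_n$ into the pointwise bound $|\tilde V(X) - \gamma| \leq C_1 \delta^2$ by a mean-value argument along the fiber $\{X - we_n\}$. The main technical obstacle I expect is maintaining the $O(\delta^2)$ bound uniformly down to the edge $L = \{x_n = s = 0\}$, where $U, V$ vanish of order $r^{1/2}$ while $V_n \sim U_n$ blows up of order $r^{-1/2}$; both the quadratic remainder $\sigma_0^2 U_{tt}$ in the linearization of $U$ and the $O(\delta^2)$ residue in the signed-distance expansion must be tracked with their correct $r$-scaling so that, after dividing by $V_n$, one still obtains a uniform $O(\delta^2)$ bound on $\tilde V - \gamma_V$ throughout $B_2 \setminus P$.
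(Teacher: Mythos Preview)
Your approach is correct and is essentially the paper's argument reorganized: both establish that $V$, shifted by $\gamma_V$ in the $e_n$-direction, agrees with $U$ up to an $O(\delta^2)$ shift, and then read off the bound on $\tilde V$ from monotonicity. The monotonicity part is identical.

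The one organizational difference worth noting is how the two handle the edge $L$. The paper first isolates a purely two-dimensional statement,
\[
U(t+\gamma_{a,b}(t,s)-C\delta^2,s)\le v_{a,b}(t,s)\le U(t+\gamma_{a,b}(t,s)+C\delta^2,s),\qquad \gamma_{a,b}=\tfrac a2\rho^2+b\rho t,
\]
proved by the trick of writing $U(t+\mu,s)=(1+\tfrac{\mu}{2\rho}+O(\mu^2/\rho^2))U(t,s)$ and then choosing $\mu=\tilde\mu\pm 4C\tilde\mu^2/\rho$ so that the quadratic remainder is absorbed \emph{into the shift} rather than left as an additive error. Since $|\tilde\mu|\le C\delta\rho^2$, the resulting shift error is $\le C\delta^2\rho^3\le C\delta^2$ uniformly in $\rho$. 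Only afterwards does the paper pass from $(t,s)$ to $(x_n,s)$ via $|t-(x_n-h(x'))|\le C\delta^2$ and $|\gamma_{a,b}(t,s)-\gamma_{a,b}(x_n,s)|\le C\delta^2$. This two-step structure makes the uniform control near $L$ automatic: no division by $V_n$ is ever needed, and the final sandwich $U(X+(\gamma_V\pm C\delta^2)e_n)\gtrless V(X)$ directly gives $|\tilde V-\gamma_V|\le C\delta^2$ (up to a harmless $O(\delta^2)$ coming from $\gamma_V(X)$ versus $\gamma_V$ at the shifted point).

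Your route---evaluate $V(X-\gamma e_n)$, linearize $U$, and observe the cancellation---is the same computation at a shifted base point. The only caveat is that your intermediate claim $U(x_n+\sigma_0,s)-U(X)=\sigma_0 U(X)/(2r)+O(\delta^2)$ with a \emph{flat} $O(\delta^2)$ is too strong near $L$: the quadratic remainder $\sigma_0^2 U_{tt}$ picks up the $O(\delta^2)$ contribution of $\sigma_0$ from the signed-distance error and scales like $\delta^4 r^{-3/2}$, which is not uniformly $O(\delta^2)$. It \emph{is} uniformly $O(\delta^2 U_t)$, which is exactly what you need after dividing by $V_n$, and you state this correctly in your final step. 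So nothing is wrong, but the paper's ``absorb into the shift'' device is what lets one avoid tracking these $r$-weights explicitly.
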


\begin{proof} 
First we show that $v_{a,b}$ satisfies 
\begin{equation}\label{gab}
U(t+\gamma_{a,b}-C\delta^2, s) \le v_{a,b}(t,s) 
\le U(t+\gamma_{a,b}+C\delta^2, s)
\end{equation} where $\rho^2=t^2+s^2$ and $\gamma_{a,b}$ is the following expression depending on $t$ and $s$:
$$\gamma_{a,b}(t,s):=\frac a 2 \rho^2 + b\rho t.$$
Indeed since (see properties of $U$ listed at the beginning of this action) $$|U_{tt}| \le C \rho^{-1} U_t$$we have that if $|\mu| \le \rho/2$ then
$$|U(t+\mu,s)-(U(t,s)+\mu U_t(t,s))| \le \mu^2 |U_{tt}(t',s)| \le C \mu^2 \rho^{-1} U_t(t,s) ,$$
where in the last inequality we used \eqref{diadic}. Thus, since $U_t=U/(2 \rho)$,
$$(1+\frac {\mu}{2 \rho}+ C \frac{\mu^2}{\rho^2})U(t,s) \ge U(t+\mu,s) \ge (1+\frac {\mu}{2 \rho} - C \frac{\mu^2}{\rho^2})U(t,s) .$$
Choosing $$\mu = \tilde \mu \pm 4C \frac{\tilde \mu^2}{\rho}$$ we obtain that 
$$U(t+\tilde \mu + 4C \frac {\tilde \mu^2}{\rho},s) \ge (1+\frac{\tilde \mu}{ 2 \rho})U(t,s) \ge U(t+\tilde \mu - 4C \frac {\tilde \mu^2}{\rho},s), $$  provided that $|\tilde \mu / \rho|<c$, with $c$ sufficiently small.
Since $$v_{a,b}=(1+\frac a 4 \rho + \frac b 2 t)\,U$$ we can apply the inequality above with
$$\tilde \mu = \frac{a}{2} \rho^2+ b t \rho, $$ hence $|\tilde \mu|/\rho \le C \delta$ and obtain the claim.

When $t$ is the signed distance from $x$ to the surface $\mathcal S$ we have  $$t=0 \quad \mbox {on} \quad \mathcal S:=\{x_n=h(x') :=\frac 1 2 x'^T M x' +\xi' \cdot x'\}$$ and by \eqref{ptxn}
$$1 \ge \frac{\p t}{\p x_n} \ge 1-C \delta^2  \quad \mbox{in $B_1$},$$
thus, by integrating this inequality on the segment $(x', h(x')), (x', x_n)$ we get
$$ |t-(x_n-h(x'))| \le C \delta^2.$$
Since in $B_1$, the surface $\mathcal S$ and $x_n=0$ are within distance $\delta$ from each other we have $|t-x_n| \le C \delta$ and hence  $$|\gamma_{a,b}(t,s)-\gamma_{a,b}(x_n,s)| \le \|\nabla v_{a,b}\|_{L^{\infty}} |t-x_n| \leq C \delta^2.$$
From the last two inequalities we have that 
$$|(t+\gamma_{a,b}(t,s)) - (x_n + \gamma_V(X))| \leq C\delta^2,$$
with $$\gamma_V(X)=\gamma_{a,b}(x_n,s)-\frac 1 2 x'^T M x' -\xi' \cdot x'.$$

Using this fact and  \eqref{gab} (and the monotonicity of $U$ in the $e_n$ direction) we obtain  
$$U(X+(\gamma_V(X)-C\delta^2)e_n) \le V(X) \le  U(X+(\gamma_V(X)+C\delta^2)e_n), $$
and the estimate for $\tilde V$ is proved.

Finally, we remark that the monotonicity of $V$ follows from \eqref{utcovt}.
\end{proof}

\begin{rem}\label{Visflat} Notice that from the last inequality in the proof above, we obtain that if $V \in \mathcal{V}_\delta$, then $V$ satisfies the $4\delta$-flatness assumption in $B_1$ (see also \eqref{flattilde}):
$$U(X - 4\delta e_n) \leq V(X) \leq U(X+4\delta e_n).$$ This could be also checked easily directly from the definition of $V.$
\end{rem}

We conclude this section with by  comparing the functions $V$ corresponding to two nearby surfaces.

\begin{lem}\label{distance} Let $\mathcal S_i, i=1,2$ be surfaces with curvature bounded by 2. Let  $$V_i=V_{\mathcal S_i,a_i,b_i} ,  \quad \text{$|a_i|, |b_i| \leq 2$}, \quad i=1,2.$$
Assume that, $$\mathcal S_i\cap B_{2\sigma}=\{x_n=h_i(x')\}, \quad \sigma \leq c$$ with $h_i$ Lipschitz graphs, $h_i(0)=0$,  $|\nabla h_i| \le 1$ and $c$ universal. 
If  $$|a_1-a_2|, |b_1-b_2|\le \eps,  \|h_1-h_2\|_{L^\infty}\le \eps \sigma^2,$$ for some small $\eps \leq c$, 
then $$V_1 (X) \le V_2(X+C \eps \sigma^2e_n)\quad \mbox{in $B_\sigma$} .$$
\end{lem}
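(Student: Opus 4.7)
My plan is to compare $V_1(X)$ with $V_2(X+\tau e_n)$ directly, where $\tau=C\eps\sigma^2$ for a large universal constant $C$ to be fixed in the course of the argument. I will use the representation $V_i(X)=v_{a_i,b_i}(t_i(X),s)$, with $t_i(X)$ the signed distance in $\R^n$ from $x$ to $\mathcal S_i$ (positive in the $+e_n$ direction), so the comparison reduces to a pointwise inequality between two explicit two-dimensional profiles. The inequality is automatic on $\{V_1=0\}$ since $V_2\ge 0$, so I focus on $\{V_1>0\}\cap B_\sigma$, where the polar angle $\beta_1$ of $(t_1(X),s)$ satisfies $\cos(\beta_1/2)>0$.

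The first step is to establish two basic ingredients. From $\|h_1-h_2\|_{L^\infty}\le \eps\sigma^2$ and a triangle-inequality argument for distance functions, $|t_1(x)-t_2(x)|\le C_1\eps\sigma^2$ in $B_{2\sigma}$; from $|\nabla h_i|\le 1$, the unit upward normal to $\mathcal S_i$ makes an angle of at most $\pi/4$ with $e_n$, so $\partial_{x_n}t_i\ge 1/\sqrt 2$ throughout $B_{2\sigma}$ (where $t_i$ is smooth, which holds for $\sigma$ small thanks to the curvature bound on $\mathcal S_i$). Together these yield
$$t_2(X+\tau e_n)\ge t_2(X)+\tau/\sqrt 2\ge t_1(X)+c_0\eps\sigma^2\quad\text{in }B_\sigma,$$
with $c_0$ as large as I wish by taking $C$ large. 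The second ingredient is the monotonicity $\partial_t v_{a,b}(t,s)\ge \tfrac12 U_t(t,s)$ in the strip $\rho\le 2\sigma$, valid uniformly for $|a|,|b|\le 2$ when $\sigma$ is smaller than a universal constant; this follows from a direct computation using $v_{a,b}=(1+\tfrac a4\rho+\tfrac b2 t)U$ and $U_t=U/(2\rho)$.

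Next I will split $V_2(X+\tau e_n)-V_1(X)=I+II$, with
$$I=v_{a_2,b_2}(t_2^\tau,s)-v_{a_2,b_2}(t_1,s),\qquad II=v_{a_2,b_2}(t_1,s)-v_{a_1,b_1}(t_1,s),$$
where $t_2^\tau=t_2(X+\tau e_n)$. The term $II$ is explicit and equal to $(\tfrac{a_2-a_1}{4}\rho_1+\tfrac{b_2-b_1}{2}t_1)U(t_1,s)$, so $|II|\le C\eps\rho_1^{3/2}\cos(\beta_1/2)\le C\eps\sigma^{3/2}\cos(\beta_1/2)$. For $I$, the monotonicity and $t_2^\tau\ge t_1+c_0\eps\sigma^2$ give $I\ge\tfrac12\int_{t_1}^{t_1+c_0\eps\sigma^2}U_t(t,s)\,dt$, and the elementary bounds $\rho(t)\le C\sigma$ and $\cos(\beta(t)/2)\ge \cos(\beta_1/2)$ on the integration interval (the angle $|\beta|$ decreases toward $0$ as $t$ increases) produce $I\ge c\,c_0\eps\sigma^{3/2}\cos(\beta_1/2)$. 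Choosing $C$ large enough that $c\,c_0$ dominates the constant in the bound on $|II|$ yields $V_2(X+\tau e_n)\ge V_1(X)$.

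The main technical point will be controlling the behavior near the singular set $P$, where $U$ and $U_t$ both degenerate; this is resolved by noting that on $\{V_1>0\}$ the positive factor $\cos(\beta_1/2)$ appears to the same power in the lower bound for $I$ and the upper bound for $|II|$, and so cancels out, reducing the decisive inequality to a universal comparison between $\sigma^2$ and $\rho_1^2\le C\sigma^2$.
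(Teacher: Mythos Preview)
Your argument is correct and follows essentially the same strategy as the paper's proof: reduce the comparison to the two-dimensional profiles $v_{a_i,b_i}$, control the signed-distance discrepancy coming from $\|h_1-h_2\|_{L^\infty}$, and absorb the coefficient difference $|a_1-a_2|,|b_1-b_2|\le\eps$ using a lower bound on $\partial_t v_{a,b}$ in terms of $U_t$.

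The executions differ only in bookkeeping. The paper first rescales to $\sigma=1$ (so $|a_i|,|b_i|,\eps$ become small), then proves the purely 2D inequality $v_{a_1,b_1}(t,s)\le v_{a_2,b_2}(t+C\eps,s)$ via a dyadic-ring argument based on the comparability property $U_t(t_1,s)/U_t(t_2,s)\le C$ for $|t_1-t_2|\le\tfrac12|(t_2,s)|$, and finally combines this with the surface comparison $t_1+C\eps\le\bar t_2$. You instead work at scale $\sigma$ and merge the two steps into the single splitting $V_2(X+\tau e_n)-V_1(X)=I+II$, bounding $I$ from below by exploiting the explicit formula $U_t=\tfrac12\rho^{-1/2}\cos(\beta/2)$ together with the monotonicity of $\cos(\beta(t)/2)$ in $t$. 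Your route sidesteps the dyadic decomposition and is arguably more direct; the paper's dyadic step yields a sharper local shift $C\eps\lambda^2\sim C\eps\rho^2$ before taking the sup, but both lead to the same final constant $C\eps\sigma^2$.
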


\begin{proof} After a rescaling of factor $1/\sigma$, we need to prove our lemma for $\sigma=1$ and with the curvature of $S_i$, $a_i, b_i$ and $\eps$ smaller than $c$ universal.

First we prove that  for $0<\lambda \leq 1$, $$v_{a_1,b_1}(t,s) \le v_{a_2,b_2}(t+C \eps \lambda^2 , s), \quad \lambda \leq \rho=|(t,s)| \leq 2\lambda.$$

By \eqref{utcovt},  $\p_t v_{a,b}$ is proportional to $\p _t U$ in the disk of radius 2. Since on the segment with endpoints $(t,s)$ and $(t+ C \eps \lambda^2,s)$ all the values of $\p_t U$ are comparable (see \eqref{diadic}) we obtain (using $2\rho U_t= U$)
\begin{align*}
v_{a_2,b_2}(t+C \eps \lambda^2, s) &\ge v_{a_2,b_2}(t,s) + C \eps \lambda^2 U_t(t,s) \\
&\ge U(1+\frac {a_2}{4}\rho + \frac {b_2} {2} t + C \eps \frac{\lambda^2}{\rho})\\
&\ge U(1+\frac {a_1}{4}\rho + \frac {b_1} {2} t )\\
 &\ge v_{a_1,b_1}(t,s),
 \end{align*}
and our claim is proved.

Since $v_{a_2,b_2}$ is increasing in the first coordinate, we obtain that
$$v_{a_1,b_1}(t,s) \le v_{a_2,b_2}(t+C \eps, s), \quad |(t,s)| \leq 1.$$
On the other hand, from the hypotheses on $h_i$ we see that in $B_1$ 
$$ t_1+ C \eps \le \bar t_2,$$ where $\bar t_2$ is the distance to $\mathcal S_2-C' \eps  e_n,$ for some $C'$ large depending on the $C$ above.
Hence in $B_1$ we have $$V_1(X)= v_{a_1,b_1}(t_1,s) \le v_{a_2,b_2}(t_1+C \eps, s) \le v_{a_2,b_2}(\bar t_2 , s) =V_2(X+C' \eps  e_n).$$
\end{proof}

\section{Harnack Inequality}

In this section we state and prove a Harnack type inequality for solutions to our free boundary problem \eqref{FB}. This will allow us to obtain some compactness of flat solutions after the transformation $g \rightarrow \tilde g$ (see Corollary \ref{corHI}) which is a crucial ingredient in Theorem \ref{mainT}.

\begin{thm}[Harnack inequality]\label{mainH} There exist $\bar \eps> 0$ small  and $\bar C>0$ large universal, such that if $g$ solves \eqref{FB} and it satisfies 
\begin{equation}\label{flatH}V(X +a_0e_n)  \leq g(X) \leq V(X+ b_0e_n) \quad \textrm{in $B_\rho(X^*) \subset B_1, $}\end{equation}with $V=V_{M, \xi', a,b} \in \mathcal V_\delta^0$, and 
$$\bar C \delta^2 \leq \frac{b_0 - a_0}{\rho} \leq \bar \eps, $$  with $|a_0|, |b_0| \leq 1,$ then \begin{equation}\label{impr}V(X + a_1e_n)  \leq g(X) \leq V(X+b_1e_n) \quad \textrm{in $B_{\bar \eta \rho}(X^*)$, }\end{equation} with $$a_0 \leq a_1\leq b_1 \leq b_0, \quad b_1-a_1 = (1-\bar \eta)(b_0-a_0),$$ for a small universal constant $\bar \eta \in (0,1/2)$. \end{thm}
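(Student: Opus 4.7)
The plan is to follow the flat-free-boundary Harnack scheme of \cite{DR, S}. By Remark \ref{rescale} and the identity $V_\lambda = V_{\lambda M, \xi', \lambda a, \lambda b}$, reduce to $X^* = 0$, $\rho = 1$; the class $\mathcal V_\delta^0$ is preserved (with $\delta$ shrinking), the gap $b_0 - a_0 =: 2\eps \in [\bar C \delta^2, \bar\eps]$ is unchanged in relative scale, and $|a_0|, |b_0|$ stay bounded. By the dichotomy at the origin and symmetry, it is enough to prove the one-sided improvement: if $g(0) \geq V(0 + \frac{a_0+b_0}{2} e_n)$, then $g(X) \geq V(X + (a_0 + \bar\eta \eps) e_n)$ in $B_{\bar\eta}$, for some universal $\bar\eta > 0$; the other case follows by reversing the roles of sub- and supersolution.

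The core device is a one-parameter family of strict subsolution barriers obtained by perturbing the surface $\mathcal S = \{x_n = h(x')\}$ of $V$. Fix a smooth nonnegative bump $\psi$ supported in $\mathcal B_{3/4}$, with $\psi \equiv 1$ on $\mathcal B_{1/2}$ and $\psi \leq 1$, and set
\[
\mathcal S_t := \{x_n = h(x') - t\eps\,\psi(x')\}, \qquad W_t := V_{\mathcal S_t,\, a,\, b + \tau t\eps}(\cdot + a_0 e_n),
\]
with $\tau > \sup|\Delta\psi|$ universal and $t \in [0, c_0]$. The same computation as in the proof of Proposition \ref{sub}, with $\mathrm{tr}\,M$ replaced by the mean curvature $\kappa(\mathcal S_t)$, gives
\[
\Delta W_t \geq \bigl(t\eps(\tau + \Delta\psi) - C_0\delta^2\bigr) U_t,
\]
which is strictly positive once $t\eps \geq c_1 \delta^2$; this is ensured by the hypothesis $\eps \geq \tfrac{1}{2}\bar C \delta^2$, provided $\bar C$ is chosen large enough and $t$ is of order one. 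The $b$-shift $\tau t\eps$ simultaneously forces $\alpha > 1$ along $F(W_t)$, so $W_t$ is a strict comparison subsolution in $B_1$.

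Now slide $t$ from $0$ (where $W_0 = V(\cdot + a_0 e_n) \leq g$ by hypothesis) up to the largest $t^*$ with $W_{t^*} \leq g$ in $\overline{B}_1$. If $t^* \geq c_0$, Lemma \ref{distance} applied to $\mathcal S_{t^*}$ and $\mathcal S$ gives $W_{t^*}(X) \geq V(X + (a_0 + c_0' \eps) e_n)$ in $B_{1/2}$, yielding the desired improvement on $B_{\bar\eta}$. Otherwise $W_{t^*}$ touches $g$ from below at some $X^* \in \overline{B}_1$: a contact on $F(W_{t^*})$ contradicts strict subsolution-ness against a viscosity solution; a contact in $B_1^+(g)$ contradicts the strong maximum principle applied to $g - W_{t^*}$, which is nonnegative and nearly harmonic (the $O(\delta^2)U_t$ error being absorbed by an $\eps U_t$-order positivity); and a contact on $\partial B_1$ is ruled out by the hypothesis $g(0) \geq V(0 + \tfrac{a_0+b_0}{2} e_n)$, which via the boundary Harnack inequality propagates to $g - V(\cdot + a_0 e_n) \geq c\eps U_n$ on a definite interior neighborhood of $\partial B_{3/4}$, leaving room to continue sliding.

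The principal difficulty is the barrier construction itself: the bump $\psi$ together with the $b$-correction $\tau t\eps$ must produce a strict comparison subsolution while respecting the constraint $a + b - \mathrm{tr}\,M = 0$ of $\mathcal V_\delta^0$ (which places $V$ exactly on the threshold between sub- and supersolution) and absorbing the $O(\delta^2)$ error in Proposition \ref{sub}. This is precisely where the hypothesis $\bar C \delta^2 \leq b_0 - a_0$ is essential: it ensures that the $\eps$-scale gain from the bump dominates the $\delta^2$-scale nonlinear error, so $W_t$ can be made strictly subsolution while simultaneously moving the free boundary upward by an amount of order $\eps$ at the origin. A secondary subtle point is the propagation of the initial positivity at $0$ via boundary Harnack in the Lipschitz domain $B_1^+(g)$, whose Lipschitz character follows a priori from the flatness assumption with $V \in \mathcal V_\delta$.
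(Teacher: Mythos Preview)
Your reduction to $X^*=0$ is not valid: translating in the $e_{n+1}$-direction takes $V_{M,\xi',a,b}$ out of the class $\mathcal V_\delta$ (the defining surface $\mathcal S$ must live in $\{s=0\}$), and the identities you invoke only handle dilations centered at the origin. The paper translates only in $x'$ so that $(x^*)'=0$, rescales, and is then \emph{forced} to distinguish three cases according to the position of $X^*$ relative to $P$ and $L$. When $X^*$ is away from $L$ no free-boundary barrier is needed at all: one applies interior Harnack (if $X^*$ is also away from $P$) or boundary Harnack near $P$ to the nearly-harmonic function $h=g-V(\cdot-\eps e_n)$, using \eqref{concl1}--\eqref{estun} to control $V_n/U_n$ and $|\Delta V|$, and the condition $\bar C\delta^2\le\eps$ to absorb the right-hand side. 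Your sketch treats only the situation in which the free boundary passes near the center of the ball, and this omission is a genuine gap.

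For that remaining case (the paper's Case~1), your bumped-surface sliding is a plausible but different route. The paper instead passes to the hodograph variable $\tilde g$, uses the \emph{paraboloid} perturbation $W=V_{M+\frac{c\eps}{n-1}I,\xi',a,b+2c\eps}\in\mathcal V_{\delta+\eps}$ (so that Proposition~\ref{estimate} gives explicit polynomial control of $\tilde V-\tilde W$), obtains the gain away from $L$ via Lemma~\ref{prelimH}, and then propagates it across $L$ by the comparison Lemma~\ref{linearcomp}; no sliding in the original coordinates is performed. Two technical inaccuracies in your version should be fixed if you pursue it: first, $\alpha\equiv 1$ for every $V_{\mathcal S,a,b}$ (see the opening line of the proof of Proposition~\ref{sub}), so the strictness of $W_t$ as a comparison subsolution comes solely from strict subharmonicity, not from $\alpha>1$; second, because the $b$-shift $\tau t\eps$ is global while $\psi$ is compactly supported, $W_t>V(\cdot+a_0 e_n)$ wherever the signed distance to $\mathcal S$ is positive, including outside the bump --- so ``$W_t\le g$ near $\partial B_1$'' is not automatic and must itself be fed by the boundary-Harnack gain, which (cf.\ Lemma~\ref{prelimH}) you only have on a region bounded away from $L$.
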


In the particular case when $V=U$, this statement was proved in \cite{DR}. Our proof follows the same lines as the one in \cite{DR}  but it requires a more careful analysis since the function $V$ is no longer a precise solution.

From this statement we get the desired corollary to be
used in the proof of our main result. Precisely, assume $g$ satisfies
\eqref{flatH} in $B_1$ with $a_0=-\eps, b_0=\eps$ for some small $\eps 
\ll \bar \eps,$ and $\delta$ such that $\bar C \delta^2 \leq \eps.$ 
Notice that from Remark \ref{Visflat}, the functions $V$ and $g$ are $(4\delta+\eps)$-flat in $B_1.$

Then at any point $X^* \in B_{1/2}$ we can apply Harnack inequality
repeatedly for a sequence of radii $\rho_m=\frac 1 2 \bar \eta^m$ and obtain $$\label{osc2} V(X +a_m e_n) \leq g(X) \leq V(X+b_me_n) \quad \textrm{in $B_{\frac 1 2 \bar\eta^{m}}(X^*)$, }\ $$with
\begin{equation}\label{osctilde}b_m-a_m = (b_0-a_0)(1-\bar \eta)^m = 2 \eps (1-\bar \eta)^m, \end{equation} for all $m$'s, $m\geq 1$ such that 
\begin{equation}\label{m}4\eps \frac{(1-\bar \eta)^{m-1}} {\bar \eta^{m-1}} \leq 
\bar \eps.\end{equation}This implies that for all such $m$'s, the function $\tilde g$ satisfies
\begin{equation}\label{oscg}
\tilde V+a_m \leq \tilde g \leq \tilde V+b_m, \quad \textrm{in $B_{\frac 1 2 \bar \eta^{m} - 4\delta-\eps}(X^*) \setminus P,$}
\end{equation} with $a_m, b_m$ as in \eqref{osctilde}. 
Define the following (possibly multivalued) function 
\begin{equation}\label{newdeftildeg}
\tilde g_{\eps, V}(X):=  \frac{\tilde g(X)-\tilde V(X)}{\eps}, \quad X \in B_{1-4\delta-\eps} \setminus P,\end{equation} and notice that $$|\tilde g_{\eps, V}| \leq 1.$$

In view of \eqref{oscg} we then get that in $B_{\frac 1 4 \bar \eta^{m} }(X^*) \setminus P$
\begin{equation}\label{oscgev}\textrm{osc} \ \tilde g _{\eps, V} \leq 2(1-\bar \eta)^m,\end{equation}
provided that 
\begin{equation}\label{newbound}
4\delta+\eps \le  \eps^{1/2} \le \bar \eta^m/4.
\end{equation}
If $ \eps \le  \, \bar  \eps \, \, \bar \eta^{2 m_0}$ for some nonnegative integer $m_0$ then our inequalities above \eqref{m}, \eqref{newbound} and hence also \eqref{oscg} hold for all $m \le m_0$. We thus obtain the following corollary.

\begin{cor} \label{corHI}Let $g$ solve \eqref{FB} and satisfy for $\eps \leq \bar \eps$
$$V(X - \eps e_n) \leq g(X) \leq V(X+\eps e_n) \quad \textrm{in $B_1$,}$$ with $$V=V_{M, \xi', a,b} \in \mathcal V_\delta^0, \quad \bar C \delta^2 \leq \eps,$$ for $\bar \eps, \bar C>0$ universal constants.
If $$\eps \le \bar \eps \, \, \bar \eta^{2 m_0},$$ for some nonnegative integer $m_0$ (with $\bar \eta>0$ small universal), then the function $\tilde g_{\eps, V}$ defined in \eqref{newdeftildeg} satisfies $$a_\eps (X)\leq \tilde g_{\eps,V}(X) \leq b_\eps(X), \quad \textrm{in $B_{1/2} \setminus P$}$$  with
$$ b_\eps - a_\eps \leq 2(1 - \bar \eta)^{m_0},$$
and $a_\eps, b_\eps$ having a modulus of continuity bounded by the H\"older function $\alpha t^\beta$ for $\alpha, \beta$  depending only on $\bar \eta$. 
\end{cor}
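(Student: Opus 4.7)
The strategy is to iterate the Harnack inequality (Theorem \ref{mainH}) on a geometric sequence of scales around an arbitrary base point $X^* \in B_{1/2}$, to obtain a geometric decay of the oscillation of $\tilde g_{\eps,V}$ on dyadically shrinking balls, and then to convert such a decay into a H\"older modulus of continuity in the standard way. In fact, most of the bookkeeping is already contained in the discussion preceding the corollary statement; what remains is to assemble it into a clean argument.

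First I would fix $X^* \in B_{1/2}$ and set $a_0 := -\eps$, $b_0 := \eps$, $\rho_0 := 1/2$, so that the hypothesis gives $V(X+a_0 e_n) \le g(X) \le V(X+b_0 e_n)$ in $B_{\rho_0}(X^*)$. I would then inductively apply Theorem \ref{mainH} at the scales $\rho_m := \tfrac{1}{2}\bar\eta^{m}$ to generate $a_m, b_m$ with $b_m - a_m = 2\eps(1-\bar\eta)^m$. At each step I need to verify the twin hypothesis $\bar C\delta^2 \le (b_m-a_m)/\rho_m \le \bar\eps$: the upper bound is exactly \eqref{m} and is guaranteed by $\eps \le \bar\eps\,\bar\eta^{2m_0}$ for all $m \le m_0$ (using that $(1-\bar\eta)/\bar\eta \le \bar\eta^{-2}$ for $\bar\eta$ small), while the lower bound is preserved because the ratio $(1-\bar\eta)^m/\bar\eta^m$ is nondecreasing in $m$, so that $(b_m-a_m)/\rho_m \ge 4\eps \ge \bar C\delta^2$. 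The resulting inequalities transfer via Lemma \ref{elem} into $\tilde V + a_m \le \tilde g \le \tilde V + b_m$ on $B_{\rho_m - 4\delta - \eps}(X^*) \setminus P$, and condition \eqref{newbound}, also implied by the hypotheses on $\eps$ and $\delta$, ensures that this set contains $B_{\bar\eta^m/4}(X^*) \setminus P$. Dividing by $\eps$ yields \eqref{oscgev}, that is,
\[
\textrm{osc}_{B_{\bar\eta^m/4}(X^*) \setminus P}\; \tilde g_{\eps,V} \le 2(1-\bar\eta)^m \qquad \text{for every } m \le m_0.
\]

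Finally, the geometric oscillation decay is converted into H\"older continuity in the usual way: setting $\beta := \log(1-\bar\eta)/\log \bar\eta \in (0,1)$, for any two points at distance $r \le \bar\eta^{m_0}/4$ one selects the unique $m \le m_0$ with $\bar\eta^{m+1}/4 \le r < \bar\eta^{m}/4$ and reads off that the branches of $\tilde g_{\eps,V}$ at these points differ by at most $C r^\beta$, with $C$ depending only on $\bar\eta$. For larger separations the global bound $2(1-\bar\eta)^{m_0}$ already suffices. I would then define $a_\eps(X)$ and $b_\eps(X)$ as the pointwise infimum and supremum of $\tilde g_{\eps, V}$ on $B_{\bar\eta^{m_0}/4}(X) \setminus P$; these automatically satisfy $a_\eps \le \tilde g_{\eps,V} \le b_\eps$, inherit the global oscillation bound $b_\eps - a_\eps \le 2(1-\bar\eta)^{m_0}$, and enjoy a H\"older modulus of the form $\alpha t^\beta$ with $\alpha, \beta$ depending only on $\bar\eta$. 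The only real obstacle is the careful simultaneous verification that both Harnack hypotheses remain valid across the iteration and that the auxiliary balls $B_{\rho_m - 4\delta - \eps}$ produced by Lemma \ref{elem} do not shrink below the target scale $\bar\eta^m/4$ — the calibration $\bar C \delta^2 \le \eps$ together with $\eps \le \bar\eps\, \bar\eta^{2m_0}$ is precisely what makes both conditions compatible.
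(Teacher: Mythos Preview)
Your proposal is correct and follows essentially the same approach as the paper: the paper derives the corollary from the discussion preceding its statement, iterating Theorem \ref{mainH} at scales $\rho_m=\tfrac12\bar\eta^m$ around an arbitrary $X^*\in B_{1/2}$ to obtain \eqref{oscgev} for all $m\le m_0$, and leaves the passage to a H\"older modulus and the construction of $a_\eps,b_\eps$ implicit. You have simply supplied those routine details (the check that the lower Harnack hypothesis $\bar C\delta^2\le (b_m-a_m)/\rho_m$ persists, the standard oscillation-to-H\"older argument, and the inf/sup definition of $a_\eps,b_\eps$), which is entirely in line with the paper's argument.
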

The proof of Harnack inequality will follow from the Proposition below.

\begin{prop}\label{babyH} There exist $\bar \eps, \bar \delta > 0$  and $\bar C>0$ universal, such that if $g$ solves \eqref{FB} and  it satisfies 
\begin{equation}\label{flatH2} V(X  - \eps e_n)  \leq g(X) \leq V(X +\eps e_n) \quad \textrm{in $B_1, $} \quad \textrm{for $0< \eps \leq \bar \eps$}\end{equation}with $$V=V_{M,\xi', a,b} \in \mathcal V_\delta^0,\quad \delta \leq \bar \delta, \quad \bar C \delta^2 \leq \eps, $$
then either
$$g(X) \leq V(X+(1-\eta)\eps e_n)\quad \text{in $B_{\eta},$}$$ 
or 
$$g \geq V(X  -(1-\eta)\eps e_n) \quad \text{in $B_{\eta},$}$$
 for a small universal constant $\eta \in (0,1)$.
 \end{prop}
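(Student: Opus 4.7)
The plan is to adapt to our setting the dichotomy-plus-sliding-barrier scheme used in \cite{DR} for the analogous statement around $V=U$, with care to absorb the $O(\delta^2)$ errors that arise because $V\in\mathcal V_\delta^0$ is only an approximate solution of \eqref{FB}. The hypothesis $\bar C\delta^2\le\eps$ is precisely what makes these absorptions admissible. First, fix the interior reference point $\bar X:=\tfrac15 e_n$, which lies in $B_1^+(V)$ at universal distance from $L$ and $F(V)$. By \eqref{flatH2} and $V_n\sim U_n$ on compact subsets of $B_1\setminus P$ (from \eqref{concl1}), we have $V(\bar X\pm\eps e_n)-V(\bar X)\sim \pm\eps\, U_n(\bar X)$, so either $g(\bar X)\ge V(\bar X)$ or $g(\bar X)\le V(\bar X)$. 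By the symmetry of the two alternatives in the conclusion we assume the former, producing the nonnegative function $w:=g-V(\,\cdot\,-\eps e_n)\ge 0$ in $B_1$ with $w(\bar X)\ge c_0\eps U_n(\bar X)$. Since $\Delta g=0$ and $|\Delta V|\le C\delta^2 U_n$ off a thin cone around $L$ by \eqref{estun}, we have $|\Delta w|\le C\bar C^{-1}\eps U_n$. Applying the boundary Harnack inequality of Section 2 to $w$ and $U_n$, with this small almost-harmonic defect treated perturbatively, together with interior Harnack, propagates the gap to a fixed compact set $K\subset B_{3/4}\setminus P$ containing $B_{1/2}\cap\{x_n^2+s^2\ge \kappa^2\}$: $w\ge c_1\eps U_n$ on $K$ for a universal $c_1>0$.

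The core of the argument is then to build a one-parameter family $V_\tau$, $\tau\in[0,c_2]$, of strict comparison subsolutions with $V_0=V(\,\cdot\,-\eps e_n)$. Choose a smooth cutoff $\psi\in C^\infty_c(B_{7/8})$, radial in $(x_n,s)$, with $\psi\equiv 1$ on $B_{1/2}$ and (as in \cite{DR}) with $\Delta(U_n\psi)\ge c>0$ on $K$. Using Proposition \ref{1}, define $V_\tau$ implicitly by $\tilde V_\tau=\tilde V-\eps+\tau\eps\psi$. Then Proposition \ref{estimate} gives the correct quadratic profile of each $V_\tau$, and combining \eqref{laplaceest} with $|\Delta V|\le C\delta^2 U_n$ yields
$$\Delta V_\tau\ \ge\ \tau\eps\,\Delta(U_n\psi)\,-\,C\delta^2 U_n\,-\,C\eps^2\ >\ 0\quad\text{on }K,$$
whenever $\tau\in(0,c_2]$, using $\bar C\delta^2\le\eps\le\bar\eps$. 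At $F(V_\tau)\cap B_{1/2}$, where $\psi\equiv 1$, an infinitesimal adjustment of the parameter $a$ (say $a\mapsto a+\tau\eps^2$, keeping $V_\tau$ in $\mathcal V_{2\delta}$) upgrades the free-boundary coefficient $\alpha$ from $1$ to $\alpha>1$, so each $V_\tau$ with $\tau>0$ is a strict comparison subsolution to \eqref{FB} in the sense of Definition \ref{defsub}.

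Finally, I would run a continuity argument. At $\tau=0$, $V_0\le g$ by hypothesis. Increasing $\tau$, the first touching value $\tau^*$ cannot occur on $F(g)$ because $V_{\tau^*}$ is a strict subsolution and $g$ is a viscosity solution. On $K$, the Step-1 gap translates via Lemma \ref{elem} and Proposition \ref{estimate} into $\tilde g-\tilde V_\tau\ge (c_1-\tau)\eps$, ruling out interior touching on $K$ for $\tau\le c_1/2$. The complementary region near the thin set $L$ is handled by invoking Lemma \ref{linearcomp} for $(\tilde V_\tau,\tilde g)$, which reduces potential interior touching there to boundary touching (already excluded). Therefore $\tau^*\ge c_3:=\min(c_1/2,c_2)$, so $V_{c_3}\le g$ in $B_{3/4}$. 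Restricting to $B_{1/2}$ and using $\psi\equiv 1$ there, this reads $g(X)\ge V(X-(1-c_3)\eps e_n)$, the desired lower alternative with $\eta:=\min(c_3,1/2)$.

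The technically most delicate point is the construction of $\psi$ and the verification that each $V_\tau$ is genuinely a strict subsolution: the interior inequality $\Delta V_\tau>0$ must survive three independent error scales (the $O(\delta^2 U_n)$ intrinsic defect of $V$, the $O(\eps^2)$ Taylor remainder from Proposition \ref{1}, and the curvature corrections of the perturbed surface), dominated by the positive term $\tau\eps\Delta(U_n\psi)$ only on the set $K$ that excludes the thin cone around $L$. Handling the excluded thin region via Lemma \ref{linearcomp} rather than pointwise PDE comparison, and choosing $\psi$ with $\Delta(U_n\psi)\ge c>0$ on $K$, is what ultimately pins down the universal constants $\bar\eps,\bar\delta,\bar C,\eta$.
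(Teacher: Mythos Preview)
Your opening step (propagating the one-point gap to a compact set away from $L$ via Harnack and boundary Harnack) is essentially Lemma~\ref{prelimH}, so that part is fine. The gap is in the barrier construction. On $B_{1/2}$, where $\psi\equiv 1$, your $V_\tau$ reduces exactly to the translate $V(\,\cdot\,-(1-\tau)\eps e_n)$. Since $V\in\mathcal V_\delta^0$ has $a+b-\operatorname{tr}M=0$, formula~\eqref{need} gives only $|\Delta V|\le \tfrac12 C_0\delta^2 U_t$, not $\Delta V>0$; so $V_\tau$ is \emph{not} a strict comparison subsolution in the region containing $F(V_\tau)$. Your proposed fix ``$a\mapsto a+\tau\eps^2$'' does not help: first, the free-boundary coefficient $\alpha$ for any $V_{\mathcal S,a,b}$ is identically $1$ regardless of $a,b$ (from $v_{a,b}(t,0)=\sqrt t+O(t^{3/2})$), so $\alpha$ cannot be upgraded this way; second, to force $\Delta V>0$ via Proposition~\ref{sub} one needs $a+b-\operatorname{tr}M\ge C_0\delta^2$, and an increment of size $\tau\eps^2$ is too small since $\delta^2$ can be as large as $\eps/\bar C$. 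Without the strict subsolution property near $L$, the continuity argument cannot exclude a first touching point on $F(g)$, and Lemma~\ref{linearcomp}---which you invoke for the thin region---also fails, as its hypothesis is precisely that $v$ be a subsolution. (There is a related issue in the transition annulus $B_{7/8}\setminus B_{1/2}$, where $\Delta(U_n\psi)$ has no sign, but the defect above is already fatal.)

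The paper sidesteps the cutoff construction altogether by choosing the barrier inside the family $\mathcal V$: it sets $W:=V_{M+\frac{c}{n-1}\eps I,\,\xi',\,a,\,b+2c\eps}\in\mathcal V_{\delta+\eps}$, so that $a_W+b_W-\operatorname{tr}M_W=c\eps\ge C_0(\delta+\eps)^2$ once $\bar C$ is large and $\bar\eps$ small, and Proposition~\ref{sub} makes $W$ a strict subsolution in all of $B_2^+(W)$, including near $L$. Proposition~\ref{estimate} then gives $\tilde V-\tilde W$ explicitly as $c\eps(2rx_n-\frac{1}{2(n-1)}|x'|^2)+O((\delta+\eps)^2)$, which is $\ge\tau^*\eps$ on the lateral boundary $\{|x'|=\tfrac12,\ r\le d\}$ and $\ge -\tfrac{\tau^*}{2}\eps$ near $0$; combining this with the gap from Lemma~\ref{prelimH} on the annular part, Lemma~\ref{linearcomp} applies directly and yields the conclusion. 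The moral is that the ``bump'' you are trying to manufacture with $\psi$ is already encoded---in a way that is compatible with the free boundary condition up to and including $L$---by perturbing the parameters $(M,b)$ within $\mathcal V$.
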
 

 First we show that if $g \geq V$ and they separate of order $\eps$ at one point, then they separate also of order $\eps$ away from a neighborhood of $L=\{x_n=0, s=0\}$. This follows from the boundary Harnack inequality. Below are the details.
 
  \begin{lem}\label{prelimH} If $g$ solves \eqref{FB} and it satisfies 
\begin{equation}\label{flatH3}g(X) \geq V(X-\eps e_n) \quad \textrm{in $B_1, $}\end{equation} \begin{equation}\label{flatH4}g(\bar X) \geq V(\bar X) \quad \textrm{at some $\bar X \in B_{\frac 1 8}(\frac 1 4 e_n), $}\end{equation}with $V=V_{M,\xi', a,b} \in \mathcal V_\delta^0$, $\bar C \delta^2 \leq \eps$ for $\bar C>0$ universal,
then\begin{equation}\label{flatH5}g(X) \geq V(X-(1-\tau) \eps e_n) \quad \textrm{in $\mathcal C$,}\end{equation} with $$\mathcal C : =\{(x',x_n,s): \frac{d}{2} \leq |(x_n,s)| \leq \frac 1 2, |x'|\leq \frac 1 2\}, \quad d=\frac{1}{8\sqrt{n-1}}$$ and $\tau$ a small universal constant $\tau\in(0,1).$
\end{lem}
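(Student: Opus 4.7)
The plan is to apply the boundary Harnack inequality to transport the lower bound $g(\bar X) \geq V(\bar X)$ at a single interior point into the uniform improvement $g \geq V(X-(1-\tau)\eps e_n)$ on the annular region $\mathcal C$. The crucial observation is that, by the hypotheses $\delta \leq \bar\delta$ and $\bar C \delta^2 \leq \eps$, the approximate solution $V$ is almost harmonic with an error that is small relative to $\eps$, so $V$ can be treated as a genuine solution up to negligible corrections.

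First, I introduce the comparison function $w(X) := g(X) - V(X-\eps e_n)$, which is nonnegative on $B_1$ by hypothesis \eqref{flatH3}. Since the annular region $\mathcal C$ avoids the cone $\{|(x_n,s)|\leq 10\delta|x'|\}$ provided $\bar\delta$ is chosen sufficiently small (because $|(x_n,s)|\geq d/2$ on $\mathcal C$ while the cone has $|(x_n,s)|\leq 5\delta$ for $|x'|\leq 1/2$), the estimate \eqref{estun} gives $|\Delta V(X-\eps e_n)|\leq C\delta^2 U_n$ and hence $\Delta w = -\Delta V(\cdot -\eps e_n) = O(\delta^2) U_n$ on a neighborhood of $\mathcal C$ inside $B_1^+(g)$. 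Next, I estimate the ``seed" value $w(\bar X)$: since $\bar X\in B_{1/8}(\tfrac14 e_n)$ is at distance bounded away from $P$, the function $V_n$ is comparable to a universal constant on the segment from $\bar X-\eps e_n$ to $\bar X$ (by \eqref{concl1} and \eqref{diadic}), so
$$w(\bar X)\geq V(\bar X)-V(\bar X-\eps e_n)=\int_0^\eps V_n(\bar X-t e_n)\,dt\geq c_*\eps,$$
with $c_*>0$ universal.

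The core step is a boundary Harnack comparison of $w$ with the positive harmonic function $U_n$ on an appropriate Lipschitz domain inside $B_1\setminus P$ containing $\bar X$ and $\mathcal C$; both $w$ and $U_n$ are nonnegative, vanish on $P$, and $w$ is almost harmonic with error $O(\delta^2 U_n)$. Standard perturbation arguments (or applying boundary Harnack to $w\pm C\delta^2 \cdot \phi$ for a suitable barrier $\phi$ with $\Delta \phi = U_n$) allow one to conclude
$$w(X)\geq c_1\, w(\bar X)\frac{U_n(X)}{U_n(\bar X)} - C\delta^2 U_n(X) \geq (c_1 c_* - C\delta^2/\eps)\,\eps\, U_n(X) \geq c_2\eps\, U_n(X)$$
for every $X\in\mathcal C$, using $\bar C\delta^2\leq \eps$ with $\bar C$ large.

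Finally, I convert this pointwise bound back to the claimed inequality. For $X\in \mathcal C$,
$$V(X-\eps e_n)-V(X-(1-\tau)\eps e_n)=-\int_{(1-\tau)\eps}^\eps V_n(X-t e_n)\,dt \leq -c_3\tau\eps\, U_n(X),$$
again by \eqref{concl1} and \eqref{diadic}, since $\mathcal C$ lies outside the cone. Combining with the previous step,
$$g(X)\geq V(X-\eps e_n)+c_2\eps U_n(X)\geq V(X-(1-\tau)\eps e_n)+(c_2-c_3\tau)\eps U_n(X)\geq V(X-(1-\tau)\eps e_n),$$
provided $\tau\leq c_2/c_3$. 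The main obstacle is executing the boundary Harnack comparison cleanly in the presence of the inhomogeneity $\Delta w=O(\delta^2 U_n)$; this is precisely where the assumption $\bar C\delta^2\leq \eps$ enters, letting the error be absorbed into the main term.
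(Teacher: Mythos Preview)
Your approach is essentially the paper's: define $h=g-V(\cdot-\eps e_n)\ge 0$, extract the seed bound $h(\bar X)\ge c\eps$ from \eqref{flatH4} via \eqref{concl1}, propagate it to $\mathcal C$ by a Harnack/boundary Harnack argument (the paper does this explicitly with two barriers $q_1,q_2$ solving $\Delta q_1=0$, $\Delta q_2=-1$ on $\widetilde{\mathcal C}\setminus(B_{1/8}(\tfrac14 e_n)\cup P)$, rather than invoking boundary Harnack abstractly), and then convert $h\ge c\eps\,U_n$ back into the translated inequality.

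Two small slips to fix. First, in your final step you need the \emph{upper} bound $V(X-(1-\tau)\eps e_n)-V(X-\eps e_n)\le C\tau\eps\,U_n(X)$ coming from $V_n\le CU_n$ in \eqref{concl1}; the inequality you wrote, $\int V_n\ge c_3\tau\eps\,U_n$, points the wrong way and would not let you choose $\tau$ small. Second, $w$ need not vanish on $P$ (the lemma only assumes the one-sided bound \eqref{flatH3}, so $g$ could be positive there); what you actually use is just $w\ge 0$ on $P$, which suffices for the barrier comparison. With these corrections your argument matches the paper's.
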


\begin{proof}
We have 
$$V(X-(1-\tau) \eps e_n) = V(X-\eps e_n) + \tau \eps V_n(X+\lambda \eps e_n), $$ for some $\lambda$ with $|\lambda| <1.$
Hence by \eqref{diadic},\eqref{concl1} for $\eps$ small enough and $X \in \mathcal C$
\begin{align*}
V(X-(1-\tau) \eps e_n) & \leq V(X-\eps e_n) + C\tau \eps U_n(X+\lambda \eps e_n)\\
 &  \leq  V(X-\eps e_n) + C_1\tau \eps U_n(X).
 \end{align*}

Thus,  if $h(X):= g(X)-V(X-\eps e_n)$  we need to show that 
\begin{equation}\label{finalh} h \geq c_1\eps  U_n, \quad \textrm{in $\mathcal C$,} \end{equation}
and then choose $\tau=c_1/C_1.$

To obtain \eqref{finalh}, notice that by a similar computation as the one above in view of 
\eqref{flatH4} and \eqref{diadic},\eqref{concl1} we get that for $\eps$ small enough
\begin{equation}\label{honepoint}h(\bar X)  \geq V(\bar X) - V(\bar X - \eps e_n )\geq c U_n(\bar X)\eps \geq c_2 \eps.
\end{equation}
Also, by \eqref{flatH3} we have
$$h \geq 0 \quad \textrm{in $B_1$.}$$
Finally, by \eqref{estun}
$$|\Delta h| \leq C \delta^2 U_n  \leq C_2 \delta^2, \quad \textrm{in $\widetilde{\mathcal C} \setminus P$},$$ where $\widetilde{\mathcal C} \supset\supset \mathcal C$ is the $d/4$-neighborhood of $\mathcal C.$

Thus in view of \eqref{honepoint} and Harnack inequality we have that (for $\bar C$ large enough)
\begin{equation}\label{firstbound}h \geq c_2 \eps - C \delta^2 \geq c_3 \eps, \quad \textrm{in $B_{1/8}(\frac 1 4 e_n)$.}\end{equation}
Denote by $$D:=\widetilde{\mathcal C} \setminus (B_{1/8}(\frac 1 4 e_n)\cup P)$$
and let $q_1,q_2$ satisfy in $D$
\begin{equation} \Delta q_1=0, \quad \Delta q_2= -1\end{equation} with boundary conditions respectively 
$$q_1=0 \quad \textrm{on $\p \widetilde{\mathcal C} \cup P$}, \quad q_1=1 \quad \textrm{on $\p B_{1/8}(\frac 1 4 e_n)$}$$ and
$$q_2=0 \quad \textrm{on $\p D.$}$$
By boundary Harnack inequality, $q_1$ is comparable to the distance function $s$ in a neighborhood of $P \cap \mathcal C \subset \subset \widetilde{\mathcal C}.$ Since $q_2$ is Lipschitz continuous in a neighborhood of $P \cap \mathcal C$, we then obtain 
\begin{equation}\label{q1q2}q_1 \geq c_4 q_2 \quad \textrm{in $\mathcal C \setminus B_{1/8}(\frac 1 4 e_n),$}\end{equation} with $c_4>0$ universal.
By the maximum principle,
$$h \geq q:= c_3 \eps \,q_1 - C_2\delta^2 \, q_2 \quad \textrm{in $D$},$$ since $h \geq q$ on $\p D$ and $\Delta h \leq \Delta q$ in $D.$ Hence, by \eqref{q1q2} we get that (for $\bar C$ large enough)
$$h \geq \eps \frac{c_3}{2}q_1 \geq c_5 \eps U_n, \quad \textrm{in $\mathcal C \setminus B_{1/8}(\frac 1 4 e_n),$}$$
where in the last inequality we used that (by boundary Harnack inequality) $q_1$ and $U_n$ are comparable. This inequality together with \eqref{firstbound} gives the desired claim \eqref{finalh}.

\end{proof}

We are now ready to present the proof of Proposition \ref{babyH}.

\

\textit{Proof of Proposition \ref{babyH}.} Assume that 
\begin{equation} g (\bar X) - V (\bar X)\geq 0, \quad \bar X= \frac 1 2 e_n.\end{equation}
Then in view of assumption \eqref{flatH2}  from Lemma \ref{prelimH}, after the change of variables $g \rightarrow \tilde g $ we get that 
\begin{equation}\label{firstineq}\tilde g(X) \geq\widetilde V(X)+\tau \eps  -\eps\quad \textrm{in $\mathcal C' \setminus P$}
\end{equation}
with$$\mathcal C' : =\{(x',x_n,s): d \leq |(x_n,s)| \leq \frac 1 4, |x'|\leq \frac 1 2\}, \quad d=\frac{1}{8\sqrt{n-1}}.$$ 
Denote by
$$W(X) : = V_{M+\frac{c}{n-1}\eps I, \xi', a,b+2c\eps}(X) \in  \mathcal{V}_{\delta+\eps},$$ with $c$ small to be made precise later.
Then in view of Proposition \ref{estimate} we have
\begin{equation}\label{VW}-2C_1(\delta+\eps)^2 \leq (\tilde V -\tilde W)+ c\eps(2 rx_n -\frac{1}{2(n-1)} |x'|^2) \leq 2C_1(\delta+\eps)^2.\end{equation}

First we choose $c$ small depending on $\tau$ such that $$\tilde V \geq \tilde W-\frac{\tau}{2} \eps,$$ where we used that $\bar C \delta^2 \leq \eps \leq \bar \eps$ with $\bar C \geq C(\tau)$ and $\bar \eps$ small enough. Then, if $\bar C$ is sufficiently large depending on $c$, 
\begin{equation}\label{lateral}\tilde V \geq \tilde W + \tau^* \eps, \quad \textrm{on $\{|(x_n,s)| \leq d, |x'| = \frac 1 2\} \setminus P,$}\end{equation} for some $\tau^*>0$ small, say $\tau^*<\tau/2$. 
These combined with \eqref{firstineq} give
\begin{equation}\label{secondineq}\tilde g(X) \geq\widetilde W(X)+\tau^*\eps   -\eps\quad \textrm{in $(\mathcal C' \cup \{|(x_n,s)| \leq d, |x'| = \frac 1 2\} )\setminus P$}.
\end{equation} Moreover, if $\bar C$ is large enough we get that
$W$ satisfies \eqref{Vsub} and hence $W$ is a subsolution. Thus from Lemma \ref{linearcomp} and the inequality above we conclude that
\begin{equation}\label{thirdineq}\tilde g(X) \geq\widetilde W(X)+\tau^*\eps   -\eps\quad \textrm{in $\{|(x_n,s)| \leq d, |x'| \leq \frac 1 2\} \setminus P$}.\end{equation}
Finally, from \eqref{VW} we see that there is a small neighborhood around the origin $B_\eta \subset \{|(x_n,s)| \leq d, |x'| \leq \frac 1 2\} $ ($\eta$ small universal depending on the constants above, $\eta <\tau^*/2$) such that  $$\tilde W \ge \tilde V - \frac{\tau^*}{2} \eps, \quad \textrm{in $B_{2\eta} \setminus P.$}$$

Hence, from \eqref{thirdineq} we conclude that $$\tilde g \ge \tilde V +\eta \eps -\eps \quad \mbox{in} \quad B_{2\eta} \setminus P,$$
for some small universal constant $\eta$, and the lemma is proved after the change of variable $\tilde g \rightarrow g.$
\qed

\

We conclude this section with the proof of Theorem \ref{mainH}.

\

\textit{Proof of Theorem \ref{mainH}.} After a translation of the origin we may assume that we satisfy our flatness hypothesis \eqref{flatH} in $B_\rho(X^*) \subset B_2$ with 
$$(x^*)'=0, \quad a_0+b_0=0, \quad V \in \mathcal V_{2\delta}^0.$$
We dilate the picture by a factor of $2/\rho$ and work with the rescalings
$$g_\rho(X) =(\frac \rho 2)^{-1/2} g(\frac{\rho}{2} X), \quad V_\rho(X) = (\frac \rho 2)^{-1/2} V(\frac{\rho}{2} X), $$ which are defined in a ball of radius 2 included in $B_{4/\rho}.$
 Notice that, if $V \in \mathcal V_{2\delta}^0$ then $V_\rho \in \mathcal V_{2 \delta}^0.$ 
 
 After dropping the subindex $\rho$ for simplicity of notation, we may assume that the flatness condition \eqref{flatH} holds in some ball $B_2(X^*) \subset \R^{n+1}$, 
with $V \in \mathcal V_{2 \delta}^0,$ $$a_0= -\eps, \quad b_0= \eps, \quad  (x^*)'=0$$ and 
$$\bar C \delta^2 \leq \eps \leq \bar \eps.$$
We need to prove the conclusion \eqref{impr} in a ball $B_{2\bar \eta}(X^*).$

We distinguish three cases depending on whether $X^*$ is close to $L$, close to $P$, or far from $P$. 

In Case 2 and Case 3 we will use the following properties from Remark \ref{remark}.

\begin{equation}\label{newremark}c \leq \frac{V_n}{U_n} \leq C, \quad |\triangle V| \le C \delta^2 U_n \quad \mbox{in} \quad B_2(X^*)\setminus (P \cup \{|(x_n,s)|\le 20 \delta |x'|\}). \end{equation}

Below $\eta$ is the universal constant from Proposition \ref{babyH}. 

\

\textit{Case 1.} $|X^*|  < \eta/4.$ 

In this case, since $B_1 \subset B_2(X^*)$ we follow under the assumptions of Proposition \ref{babyH}. Hence we can conclude that for any $\bar \eta \leq \eta/4$
in  $B_{2\bar \eta}(X^*) \subset B_{\eta}$ either 
$$g(X) \leq V(X +(1-\eta)\eps e_n),$$ 
or 
$$g(X) \geq V(X  -(1-\eta)\eps e_n),$$ and our conclusion is satisfied for all $\bar \eta \leq \eta/4.$ 

\

\textit{Case 2.} $|X^*|  \geq \eta/4, $ and $B_{\frac{\eta}{32}}(X^*) \cap P = \emptyset.$ 

In this case, if $\bar \eps $ is small enough then it follows from \eqref{newremark} that  the function $$h(X):= g(X) - V(X-\eps e_n) \geq 0,$$ satisfies  $$|\Delta h| \leq C \delta^2 U_n \quad \textrm{in $B:=B_{\frac{\eta}{64}}(X^*)$}.$$  Notice also that by  Harnack inequality
\begin{equation}\label{claim2new}\frac{U_n(X)}{U_n(Y)} \leq  C \quad\textrm{for $X,Y \in B$,} \end{equation} with $C$ universal.
Assume that 
$$g(X^*) \geq V(X^*).$$
Then, in view of \eqref{newremark} and \eqref{claim2new} 
$$h(X^*) = g(X^*) - V(X^*-\eps e_n) \geq  c \eps U_n(X^*).$$

Hence by Harnack inequality, \eqref{claim2new} and the condition $\bar C\delta^2 \leq \eps$

$$h \geq c \eps U_n(X^*) -  C \delta^2\|U_n\|_{L^\infty(B)} \geq c' \eps  U_n(X^*) \quad \textrm{in $B_{\frac{\eta}{128}}(X^*)$}.$$

Thus, using \eqref{newremark} we have that for $\tau$ small enough
$$h \geq c' \eps  \sup_B V_n \geq V(X- (1-\tau)\eps e_n) -V(X-\eps e_n)\quad \textrm{in $B_{\frac{\eta}{128}}(X^*)$},$$ from which our desired conclusion follows with any $\bar \eta$ such that
$2\bar \eta \leq \min\{\eta/128,\tau\}.$

\

\textit{Case 3.} $|X^*| \geq \eta/4$ and $B_{\frac{\eta}{32}(X^*)} \cap P \neq \emptyset.$

In this case we argue similarly as in the previous case but we need to make use of the boundary Harnack inequality. 

Assume that $X^* \in \{s>0\}$ and call $X^*_0=(x^*, 0)$ the projection of $X^*$ onto $\{s=0\}$. If $\bar \eps$ is small enough then it follows from  \eqref{newremark} that  the function $$h(X):= g(X) - V(X-\eps e_n) \geq 0,$$ satisfies  $$|\Delta h| \leq C \delta^2 U_n \quad \textrm{in $B:=B_{\frac{\eta}{8}}(X^*_0) \cap \{s>0\}$},$$ for a universal constant $C.$ Denote by $Y^*= X^*_0 + \frac{\eta}{16} e_n$ and assume that 
$$g(Y^*) \geq V(Y^*).$$ As in the previous case, by Harnack inequality
\begin{equation}\label{maxbound}h \geq c \eps U_n(Y^*) \quad \textrm{in $B_{\frac{\eta}{32}}(Y^*).$}\end{equation}

Now we argue similarly as in Lemma \ref{prelimH}.

Denote by $$D:= (B_{\eta/8}(X^*_0) \setminus B_{\eta/32}(Y^*)) \cap \{s>0\}.$$ Let $q_1,q_2$ satisfy in $D$
$$\Delta q_1=0, \quad \Delta q_2=-1$$
with boundary conditions respectively,
$$q_1=1 \quad \textrm{on $\p B_{\eta/32}(Y^*)$}, \quad q_1=0 \quad \textrm{on $\p (B_{\eta/8}(X^*_0)\cap \{s>0\}) $}$$
and
$$q_2=0 \quad \textrm{on $\p D.$}$$

By the maximum principle, in view of \eqref{maxbound} we obtain that
$$h \geq c\eps U_n(Y^*)q_1-C\delta^2 q_2 \quad \textrm{in $D$.}$$ Moreover, $$q_1 \geq c q_2 \quad \textrm{in $D \cap B_{\eta/16}(X_0^*).$}$$ Hence using that $\bar C\delta^2 \leq \eps$ we get
$$h(X) \geq c'\eps U_n(Y^*) q_1(X) \geq c \eps U_n(X) \quad \textrm{in $B_{\eta/16}(X^*_0) \cap \{s>0\}$}$$
where in the last inequality we used that $U_n(Y^*)q_1$ is comparable to $U_n$ in view of boundary Harnack inequality.

Now we use \eqref{diadic} and \eqref{newremark} to conclude 
\begin{align*}
h(X)&= h(x,x_{n+1}) \ge c \eps \sup_ {B_{\frac \eta 8}(X_0^*)}U_n(y,x_{n+1}) \ge c \eps \sup_{B_{\frac \eta 8}(X_0^*)}V_n(y,x_{n+1})  \\
& \geq V(X- (1-\tau)\eps e_n) -V(X-\eps e_n)\quad \textrm{in $B_{\frac{\eta}{16}}(X_0^*)\supset B_{\frac{\eta}{32}}(X^*)$}.\end{align*} Then our desired statement holds for $\bar \eta \leq \min\{\tau/2, \eta/64\}.$
\qed

\section{Improvement of flatness.}

In this section we prove our main Theorem \ref{mainT}. We start with the following quadratic improvement of flatness proposition. We show that if a solution $g$ stays in a $\lambda^{2+\alpha}$ neighborhood of a function $V \in \mathcal V_1^0$ in a ball $B_\lambda$ then in $B_{\eta \lambda},$ $g$ is in a $(\lambda\eta)^{2+\alpha}$ neighborhood of another function $V$ in the same class. 

\begin{prop}\label{impflat}Given $\alpha \in (0,1)$, there exist $\lambda_0, \eta_0 \in (0,1)$ and $C>0$ large depending on $\alpha$ and $n$, such that if $g$ solves \eqref{FB}, $0 \in F(g)$ and $g$ satisfies
\begin{equation}\label{a1} V(X - \lambda^{2+\alpha} e_n) \leq g(X) \leq V(X + \lambda^{2+\alpha} e_n), \quad \textrm{in $B_{\lambda}$ with $0<\lambda\leq \lambda_0$}
\end{equation} for $V= V_{M, 0, a, b} \in \mathcal{V}^0_1$, then in a possibly different system of coordinates denoted by $\bar E = \{\bar e_1,\ldots, \bar e_n, \bar e_{n+1}\}$, 
\begin{equation}\label{c1} \bar V(X - (\eta_0 \lambda)^{2+\alpha} \bar e_n) \leq g( X) \leq \bar V(X + (\eta_0\lambda)^{2+\alpha} \bar e_n), \quad \textrm{in $B_{\eta_0\lambda}$}
\end{equation} for some $\bar V= V_{\bar{\mathcal S}, \bar a, \bar b}$ (defined in \eqref{vS}) with $\bar S$ given in the $\bar E$ coordinates by 
$$\bar{\mathcal S}= \{\bar x_n = \frac 1 2 (\bar x')^T \bar M \bar x'\},$$ and $$\|\bar M - M\|, |\bar a -a|, |\bar b -b| \leq C\lambda^\alpha, \quad \bar a + \bar b -tr \bar M=0.$$
Moreover, for any $\sigma \in (0,1]$, the surfaces $\bar {\mathcal S}$ and $\mathcal S$ separate in $B_\sigma$ at most 
$C (\lambda^\alpha \sigma^2 +\lambda^{1+\alpha} \sigma).$ 
\end{prop}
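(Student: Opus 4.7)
The plan is a compactness-and-contradiction argument in the spirit of the flatness theorems of \cite{DR} and \cite{S}, adapted to deliver a quadratic rather than linear improvement, with the family $\mathcal V^0_\delta$ from Section 3 playing the role that affine graphs play in the classical setup. I would first rescale: set $g_k^*(X):=\lambda_k^{-1/2}g_k(\lambda_k X)$ and $V_k^*:=V_{\lambda_k M_k,0,\lambda_k a_k,\lambda_k b_k}\in\mathcal V^0_{\lambda_k}$, so that \eqref{a1} translates into $\eps_k$-flatness of $g_k^*$ around $V_k^*$ in $B_1$ with $\eps_k=\lambda_k^{1+\alpha}$ and $\delta_k=\lambda_k$. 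For $\lambda_k$ small one has $\bar C\delta_k^2\le\eps_k$, placing us in the regime of Corollary \ref{corHI}. Assuming by contradiction sequences $\lambda_k\to 0$, $V_k\in\mathcal V^0_1$ and solutions $g_k$ with $0\in F(g_k)$ for which no admissible $(\bar V_k,\bar E_k)$ verifies \eqref{c1}, Corollary \ref{corHI} makes the normalized Hodograph increments
\[
h_k(X):=\frac{\tilde g_k^*(X)-\tilde V_k^*(X)}{\lambda_k^{1+\alpha}}
\]
bounded by $1$ and uniformly H\"older continuous on compacts of $B_{1/2}\setminus P$, so $h_k\to h$ locally uniformly along a subsequence, with $|h|\le 1$.

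The next step, and the main technical obstacle, is to identify $h$ as a viscosity solution of the linearized problem \eqref{linear}. On compacts of $B_{1/2}\setminus(P\cup L)$ the equation $\Delta(U_n h)=0$ should follow by Taylor expanding the Hodograph identity \eqref{til} for $g_k^*$ around $V_k^*$ in the spirit of Proposition \ref{1}, combined with the bound $|\Delta V_k^*|\le C\delta_k^2 U_n=o(\eps_k U_n)$ from Remark \ref{remark}. For the boundary condition $|\nabla_r h|=0$ on $L$ I would argue by contradiction: if $h$ were touched by below at $X_0\in L$ by a test function $\phi$ as in Definition \ref{linearsol} with $b(X_0)>0$, I would build perturbed strict subsolutions of the form
\[
W_k=V_{M_k+\eps_k M',\,0,\,a_k+\eps_k a',\,b_k+\eps_k b'}
\]
with $a'+b'-\mathrm{tr}\,M'>0$ so that Proposition \ref{sub} makes them genuine subsolutions, slide them upward until they touch $g_k$ from below near $X_0$, and use Lemma \ref{distance} to compare the $W_k$'s with $\phi$ at nearby scales; Lemma \ref{linearcomp} then contradicts the viscosity property of $g_k$.

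Once $h$ is known to solve \eqref{linear} with $|h|\le 1$, Corollary \ref{classnewcor} yields $\xi_0,a_0,b_0,M_0$ with universal bounds, $a_0+b_0-\mathrm{tr}\,M_0=0$, and a $\tfrac14\eta_0^{2+\alpha}$-approximation of $h$ by the polynomial $h(0)+\xi_0\cdot x'+\tfrac12(x')^T M_0 x'-\tfrac{a_0}{2}r^2-b_0 rx_n$ on $B_{\eta_0}$; the fact that $0\in F(g_k)\cap F(V_k)$ forces $h(0)=0$. Matching this polynomial with the expression $\gamma_V$ from Proposition \ref{estimate} dictates, in rescaled variables, the candidate parameters
\[
\bar M_k^*=\lambda_k M_k-\lambda_k^{1+\alpha}M_0,\quad \bar a_k^*=\lambda_k a_k-\lambda_k^{1+\alpha}a_0,\quad \bar b_k^*=\lambda_k b_k-\lambda_k^{1+\alpha}b_0,
\]
together with a residual linear term $-\lambda_k^{1+\alpha}\xi_0\cdot x'$. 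This residual is zeroed out by choosing $\bar E_k$ to be the rotation of the standard frame in the $2$-plane spanned by $e_n$ and the horizontal lift of $\xi_0$, through angle $\lambda_k^{1+\alpha}|\xi_0|$; in $\bar E_k$ the surface $\bar{\mathcal S}_k$ becomes $\{\bar x_n=\tfrac12(\bar x')^T\bar M_k\bar x'\}$ up to errors of order $\lambda_k^{2+2\alpha}$, and undoing the rescaling delivers the bounds $\|\bar M_k-M_k\|,\,|\bar a_k-a_k|,\,|\bar b_k-b_k|\le C\lambda_k^\alpha$ together with the trace constraint.

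To close the contradiction, I would combine the uniform convergence $h_k\to h$ on $\overline B_{\eta_0}$, the quadratic bound above, and Proposition \ref{estimate} applied to both $V_k^*$ and $\bar V_k^*$ to obtain $|\tilde g_k^*-\tilde{\bar V}_k^*|\le \tfrac12\eta_0^{2+\alpha}\lambda_k^{1+\alpha}$ on $B_{\eta_0}\setminus P$ for $k$ large. Lemma \ref{elem} transfers this into pointwise $(\eta_0\lambda_k)^{2+\alpha}$-flatness of $g_k$ around $\bar V_k$ in $B_{\eta_0\lambda_k}$ (after a harmless shrinkage of $\eta_0$), contradicting the failure assumption. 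The separation estimate for $\bar{\mathcal S}$ and $\mathcal S$ in $B_\sigma$ is then an immediate Taylor expansion: the quadratic parts contribute $\|\bar M-M\|\sigma^2=O(\lambda^\alpha\sigma^2)$, and the frame tilt of angle $\lambda^{1+\alpha}|\xi_0|$ contributes a linear-in-$\sigma$ error of size $O(\lambda^{1+\alpha}\sigma)$.
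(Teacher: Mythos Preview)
Your proposal is correct and follows essentially the same compactness--and--linearization strategy as the paper's own proof: rescale, extract a limit via Corollary~\ref{corHI}, identify it as a solution of \eqref{linear}, apply Corollary~\ref{classnewcor}, and rotate away the residual linear term. The only technical differences are that the paper normalizes by the explicit polynomial $\gamma_{V_\lambda}$ of Proposition~\ref{estimate} (for which $\Delta(U_n\gamma_{V_\lambda})=0$ exactly) rather than by $\tilde V_\lambda$, which streamlines the interior viscosity step, and its barrier for the free boundary condition carries a $-\eps\xi'$ parameter to match the linear part $\xi'\cdot x'$ of the test function, which your schematic $W_k$ with second argument $0$ omits.
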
 
\begin{proof} Let $\eta_0, C$ be the constants in Corollary \ref{classnewcor}.

The proof is by compactness. Assume that no such $\lambda_0$ exists, then we can find a sequence 
of $\lambda_k$'s, tending to 0, $g_k$ and $V_k$ satisfying \eqref{a1} for which \eqref{c1} fails. We rescale $g_k$ and $V_k$. For simplicity of notation we drop the dependence on $k$ and denote
$$g_\lambda(X) = \lambda^{-1/2} g(\lambda X), \quad V_\lambda(X)=\lambda^{-1/2}V(\lambda X), \quad X\in B_1.$$ Notice that $$V_\lambda= V_{\lambda M,0, \lambda a, \lambda b} \in \mathcal V_\lambda^0,$$ and 

\begin{equation*} V_\lambda(X - \lambda^{1+\alpha} e_n) \leq g_\lambda(X) \leq V_\lambda(X + \lambda^{1+\alpha} e_n)\quad \textrm{in $B_{1}.$}
\end{equation*} 
Let  $$\eps =\lambda^{1+\alpha}, \quad \delta=\lambda$$ and define 
\begin{equation}\label{wlambda}w_\lambda:= \frac{\tilde g_\lambda - \gamma_{V_\lambda}}{\eps}.\end{equation} Thus by Proposition \ref{estimate}
$$w_\lambda=  \frac{\tilde g_\lambda -\tilde V_\lambda}{\eps} + \frac{\tilde V_\lambda - \gamma_{V_\lambda}}{\eps}= \widetilde{(g_\lambda)}_{\eps, \tilde V_\lambda} + O(\frac{\delta^2}{\eps})$$
and hence by Corollary \ref{corHI} we get that $w_\lambda$ converges uniformly to a Holder continuous function $w_0$ as $k$ tends to $\infty$ (and $\lambda \rightarrow 0$), with $w_0(0)=0$ and $|w_0| \leq 1.$ 

We claim that $w_0$ is a viscosity solution of the linearized problem 

\begin{equation}\label{linearp}\begin{cases} \Delta (U_n w_0) = 0, \quad \text{in $B_{1/2} \setminus P,$}\\ |\nabla_r w_0|=0, \quad \text{on $B_{1/2}\cap L$.}\end{cases}\end{equation}

We start by showing that $U_n w_0$ is harmonic in $B_{1/2} \setminus P. $ 

Let $\tilde \varphi$ be a smooth function which touches $w_0$ strictly from below at $X_0 \in B_{1/2} \setminus P.$ We need to show that 
\begin{equation}\label{des} \Delta (U_n \tilde \varphi)(X_0) \leq 0.
\end{equation}
Since  $w_\lambda$ converges uniformly to $w_0$ in $B_{1/2} \setminus P$ we conclude that there exist a sequence of constants $c_\lambda \rightarrow 0$ and a sequence of points $X_\lambda \in B_{1/2} \setminus P$, $X_\lambda \rightarrow X_0$ such that $\tilde \psi_\lambda:=\eps(\tilde \varphi + c_\lambda) + \gamma_{\tilde V_\lambda}$ touches $\tilde g_\lambda$ by below at $X_\lambda$ for a sequence of $\lambda$'s tending to 0.

Define the function $\psi_\lambda$ by the following identity
\begin{equation}\label{varphi}\psi_\lambda (X-  \tilde{\psi_\lambda}(X) e_n) = U(X). \end{equation}

Then according to \eqref{gtildeg} $\psi_\lambda$ touches $g_\lambda$ from below at $Y_\lambda = X_\lambda -  \tilde \psi_\lambda(X_\lambda)e_n \in B_1^+(g_\lambda)$. Thus, since $g_\lambda$ satisfies \eqref{FB} in $B_1$ it follows that
\begin{equation}\label{sign}
\Delta \psi_\lambda(Y_\lambda) \leq 0.
\end{equation}
 In a neighborhood of $X_0$,  $\gamma_{V_\lambda}/\lambda$ has bounded $C^k$ norms (depending on $|X_0|$) hence $\tilde \psi_\lambda/\lambda$ has also bounded $C^k$ norms. By Proposition \ref{1}
\begin{align*}\Delta \psi_\lambda&=\lambda \Delta (U_n (\tilde \psi_\lambda/\lambda)) + O(\lambda^2)\\ &=\Delta (U_n \tilde \psi_\lambda)+O(\lambda^2)\\
& = \Delta (U_n (\eps \tilde \varphi + \gamma_{\tilde V_\lambda}))(X_\lambda)+ O(\lambda^2)\\ & = \eps \Delta(U_n \tilde \varphi) + O(\lambda^2)\end{align*}
where we have used that $$\Delta(U_n\gamma_{V_{\lambda}}) = 0.$$
This can be checked either explicitly or by using Theorem \ref{class}.

In conclusion
$$\eps \Delta (U_n \tilde \varphi)(Y_\lambda) + O(\lambda^2) \le 0.$$
We divide by $\eps=\lambda^{1+\alpha}$ and let $\lambda \rightarrow 0$. Using that $Y_\lambda \rightarrow X_0$ we  obtain
$$\Delta(U_n \tilde \varphi)(X_0) \le 0,$$ as desired.

Next we need to show that $$|\nabla_r w_0 |(X_0)= 0, \quad X_0=(x'_0,0,0) \in B_{1/2} \cap L,$$ in the viscosity sense of Definition \ref{linearsol}. 

We argue by contradiction.  Assume for simplicity (after a translation) that there exists a function $\phi$ which  touches $w_0$ by below at $0$ with  $\phi(0)=0$ and such that $$\phi(X) = \xi' \cdot x'  + \beta  r + O(|x'|^2 + r^{3/2}), $$ with $$\beta >0.$$   

Then we can find constants $\sigma, \tilde r$ small and $A$ large  such that the polynomial

$$q(X)= \xi' \cdot x'  - \frac{A}{2}|x'|^2 + 2A (n-1)x_n r$$
 touches $\phi$ by below at $0$  in a tubular neighborhood  $N_{\bar r}= \{|x'|\leq \tilde r, r \leq \tilde r\}$ of $0,$ with 
 
 $$\phi- q \geq \sigma>0, \quad \text{on $N_{\tilde r} \setminus N_{\tilde r/2}$.}$$ This implies that
\begin{equation}\label{second}
w_0 - q \geq \sigma>0, \quad \text{on $N_{\tilde r} \setminus N_{\tilde r/2}$,}\end{equation}
and 
\begin{equation}\label{third}
w_0(0)- q(0) =0.\end{equation}
In particular, by continuity near the origin we can find a point $X^*$ such that \begin{equation}\label{thirdprime}
w_0 (X^*)- q(X^*) \leq \frac{\sigma}{8}, \quad \textrm{$X^* \in N_{\tilde r} \setminus P$ close to 0}. \end{equation}

Now, let us define 

$$W_\lambda:=V_{\lambda M+A\eps I, -\eps \xi', \lambda a, \lambda b+2\eps A(n-1)} \in \mathcal V_{2\delta}.$$

Then in view of Proposition \ref{estimate} we have 

$$\widetilde W_\lambda = \eps q + \widetilde \gamma_{V_\lambda} + O(\delta^2)$$
and moreover, $W_\lambda$ is a subsolution to our problem since $\eps \gg \delta^2$.

Thus, from the uniform convergence of $w_\lambda$ to $w_0$ and \eqref{second} we get that (for all $\lambda$ small)
\begin{equation}\label{fcont}
 \frac{\tilde g_\lambda - \widetilde W_\lambda}{\eps} = w_\lambda - q + O(\frac{\delta^2}{\eps}) \geq \frac \sigma 2 \quad \text{in $(N_{\tilde r} \setminus N_{\tilde r/2}) \setminus P.$} 
\end{equation}
Similarly, from the uniform convergence of $w_\lambda$ to $w_0$ and \eqref{thirdprime} we get that for $k$ large
\begin{equation}\label{scont}
\frac{(\tilde g_\lambda - \widetilde W_\lambda)(X^*)}{\eps} \leq \frac \sigma 4,  \quad \text{ at  $X^* \in N_{\tilde r} \setminus P.$}\end{equation} 

On the other hand, it follows from Lemma \ref{linearcomp} and \eqref{fcont} that
\begin{equation*}
\frac{\tilde g_\lambda - \widetilde W_\lambda}{\eps} \geq \frac \sigma 2 \quad \text{in $N_{\tilde r} \setminus P,$} 
\end{equation*} which contradicts \eqref{scont}.

In conclusion $w_0$ solves the linearized problem. Hence, by Corollary \ref{classnewcor} since $w_0(0) = 0$, $w_0$ satisfies 
\begin{equation}\label{bound2}  - \frac{1}{4} \eta_0^{2+\alpha}\leq  w_0(X) -(  \xi_0 \cdot x' + \frac 12 x'^TM_0x' -  \frac{a_0}{2} r^2 - b_0 x_n r) \leq  \frac 14 \eta_0^{2+\alpha} \quad \text{in $B_{4\eta_0},$}\end{equation} 
 for some $\eta_0 \in (0,1)$ universal and with $$a_0+b_0 - tr M_0=0, \quad \quad |\xi_0|,\|M_0\|, |a_0|, |b_0| \le C.$$

From the uniform convergence of $w_\lambda$ to $w_0$, we get that for all $k$ large enough
\begin{equation}\label{boundnew}  - \frac{1}{2} \eta_0^{2+\alpha}\leq  w_\lambda(X)-\frac{\tilde T_\lambda-\gamma_{\tilde V_\lambda}}{\eps} \leq \frac{1}{2} \eta_0^{2+\alpha} \quad \text{in $B_{4\eta_0} \setminus P,$}\end{equation}  
with
$$T_\lambda:=V_{\lambda M-\eps M_0, -\eps \xi_0,\lambda a-\eps a_0, \lambda b-\eps b_0}.$$
In conclusion, from the definition \eqref{wlambda} of $w_\lambda$, we get
\begin{equation}\label{boundnew2}  
\tilde T_\lambda -\frac{\eps}{2} \eta_0^{2+\alpha}  \le \tilde g_\lambda \le \tilde T_\lambda +\frac{\eps}{2} \eta_0^{2+\alpha},
\end{equation} 
or
$$T_\lambda(X-\frac{\eps}{2} \eta_0^{2+\alpha}e_n)\le  g_\lambda(X) \le T_\lambda(X+\frac{\eps}{2} \eta_0^{2+\alpha}e_n) \quad \mbox{in $B_{2\eta_0}$.}$$

We rescale $g_\lambda$ back from the ball $B_1$ to $B_\lambda$ and obtain
\begin{equation}\label{rotate}T(X-  \frac{\eps \lambda}{2} \eta_0^{2+\alpha}e_n)\le  g(X) \le T(X+\frac{\eps \lambda }{2} \eta_0^{2+\alpha}e_n) \quad \mbox{in $B_{2\lambda\eta_0}$,}\end{equation}
with $$T=V_{\mathcal S_T, a_T,b_T},$$
for
$$\mathcal S_T:= \{x_n= \frac 1 2 (x')^TM_T x' + \xi_T \cdot x'\},$$ $$M_T:=M-\frac \eps \lambda M_0, \quad \xi_T:= -\eps \xi_0, \quad a_T:= a- \frac \eps \lambda a_0, \quad b_T: = b- \frac \eps \lambda b_0 .$$

Next we show that in a different system of coordinates, called $\bar E$, the function $T$ can be approximated by $V_{M_T,0, a_T,  b_T}.$

Assume for simplicity that $\xi_T$ points in the $e_1$ direction. Then we choose an orthogonal system of coordinates  $\bar E:= \{\bar e _1, \bar e_2, \ldots, \bar e_{n+1}\}$ with $$\bar e_i=e_i, \quad \textrm{if $i \neq 1,n$}$$ and $\bar e_n$  normal to $S_T$ at 0.

Notice that the $\bar E$ system of coordinates is obtained from the standard one after an orthogonal transformation of norm bounded by $C|\xi_T|$ which is smaller than $C\eps.$

A point in this system on coordinates is denoted by $\bar X$. We let,
$$\bar {\mathcal{S}}: = \{\bar x_{n} = \frac 1 2 (\bar x ')^T M _T \bar x'\},$$ 
and we write $\bar{\mathcal S}$ as a graph in the $e_n$ direction, that is $$\bar{\mathcal S}:= \{ x_{n} = h(x')\}.$$ We claim that in a ball of radius $\sigma$  the distance (in the $e_n$ direction) between $\mathcal S_T$ and $\bar{\mathcal S}$ in $B_\sigma$ is less that $C \eps \sigma^2$, for any $0< \sigma \leq 1 $.

Indeed, since $\bar x = O x$ with $O$ orthogonal and $\|O- I\| \leq C\eps$, we obtain by implicit differentiation
$$\|D_{x'}^2 h - M_T\|_{L^\infty(B_1)} \leq C\eps, \quad \nabla_{x'} h(0)=\xi_T.$$ 
Thus
in $B_{2 \eta_0 \lambda}$ we have that the surfaces $$S_T \pm \frac \eps 2 \lambda \eta_0^{2+\alpha} e_n$$ lie between $$\bar S \pm \eps \lambda \eta_0^{2+\alpha} \bar e_n$$ since $C\eps (\eta_0 \lambda)^2 \ll \frac \eps 2 \lambda \eta_0^{2+\alpha}$.

In view of this inclusion, using that $v_{a_T,b_T}(t,s)$ is monotone in $t$, we obtain from \eqref{rotate} the desired conclusion  \eqref{c1} with $\bar M= M_T, \bar a = a_T, \bar b = b_T.$

Since the distance between $\mathcal S _T$ and $S$ in $B_\sigma$ is less than $C(\dfrac \eps \lambda \sigma^2+\eps \sigma)$ the proof is finished. 
\end{proof}

We can now prove our main Theorem \ref{mainT}. In fact we show that under our flatness assumption, a solution $g$ can be approximated in a $C^{2,\alpha}$ fashion by a function $V \in \mathcal V_C^0.$

\begin{thm}
There exists $\bar \eps>0$ small universal  such that if $g$ solves \eqref{FB} in $B_1$ with
\begin{equation}\label{ass1}\{x \in \mathcal{B}_1 : x_n \leq - \bar \eps\} \subset \{x \in \mathcal{B}_1 : g(x,0)=0\} \subset \{x \in \mathcal{B}_1 : x_n \leq \bar \eps \},\end{equation}
then in an appropriate system of coordinates denoted by $\bar e_i$
\begin{equation*}\label{trap1}V(\bar X-C\lambda^{2+\alpha}\bar e_n) \le g(\bar X) \le V(\bar X+C\lambda^{2+\alpha}\bar e_n) \quad \mbox{in $B_\lambda$, \, for all $0<\lambda<1/C,$} \end{equation*}
for some $V=V_{M_0,0,a_0,b_0} \in \mathcal V^0_C$, with $C$ depending on $n$ and $\alpha$.  In particular,  $F(g) \cap \mathcal{B}_{1/2}$ is a $C^{2,\alpha}$ graph in the $e_n$ direction for any $\alpha \in (0,1)$.
\end{thm}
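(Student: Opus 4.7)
My plan is to iterate Proposition \ref{impflat} at the given free boundary point---by translation, at the origin---producing a Cauchy sequence $\{V_k\}\subset \mathcal{V}_1^0$ whose limit $V_\infty\in\mathcal{V}_C^0$ realizes the quadratic approximation claimed. As a setup, I invoke the $C^{1,\alpha}$ flatness result of \cite{DR}: provided $\bar{\eps}$ is sufficiently small, $F(g)\cap \mathcal{B}_{1/2}$ is a $C^{1,\alpha}$ graph in the $e_n$ direction, and at every free boundary point $x_0 \in F(g)\cap \mathcal{B}_{1/2}$, after translating $x_0$ to the origin and rotating so that the normal at $x_0$ aligns with $e_n$, the function $g$ satisfies $U(X-C_0\bar{\eps}\,e_n)\le g(X)\le U(X+C_0\bar{\eps}\,e_n)$ in $B_{1/2}$ for a universal $C_0$. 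Fix $\lambda_0,\eta_0\in(0,1)$ as in Proposition \ref{impflat} and shrink $\bar{\eps}$ so that $C_0\bar{\eps}\le \lambda_0^{2+\alpha}$; then the trivial element $V_0:=U=V_{0,0,0,0}\in \mathcal{V}_1^0$ satisfies the hypothesis of Proposition \ref{impflat} at scale $\lambda_0$.

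Iteratively applying Proposition \ref{impflat} yields, for each $k\ge 0$, a function $V_k=V_{M_k,0,a_k,b_k}\in\mathcal{V}_1^0$, a coordinate frame $\bar E^{(k)}$, and the step-$k$ flatness
\begin{equation*}
V_k(X-\lambda_k^{2+\alpha}\bar{e}_n^{(k)})\le g(X)\le V_k(X+\lambda_k^{2+\alpha}\bar{e}_n^{(k)}) \quad\text{in } B_{\lambda_k},\quad \lambda_k:=\eta_0^k\lambda_0.
\end{equation*}
Proposition \ref{impflat} gives $\|M_{k+1}-M_k\|,|a_{k+1}-a_k|,|b_{k+1}-b_k|\le C\lambda_k^\alpha$, and its ``moreover'' clause (using that both $\mathcal{S}_k$ and $\mathcal{S}_{k+1}$ have vanishing linear part in their own frames) controls the rotation from $\bar E^{(k)}$ to $\bar E^{(k+1)}$ by $C\lambda_k^\alpha$. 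All these differences are geometrically summable, so the parameters converge to some $(M_\infty,a_\infty,b_\infty)$ with $|M_\infty|+|a_\infty|+|b_\infty|\le C$ and $a_\infty+b_\infty-\operatorname{tr}M_\infty=0$, and the frames converge to a limit frame $\bar E$; I define $V_\infty:=V_{M_\infty,0,a_\infty,b_\infty}\in \mathcal{V}_C^0$ in the frame $\bar E$.

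To transfer the flatness to the single $V_\infty$, I compare $V_k$ with $V_\infty$ in $B_{\lambda_k}$ using Lemma \ref{distance}: the tail sums $\sum_{j\ge k}\lambda_j^\alpha\le C\lambda_k^\alpha$ bound the remaining parameter and rotation changes, so $V_k$ and $V_\infty$ differ in the $\bar e_n$-direction by at most $C\lambda_k^{2+\alpha}$ on $B_{\lambda_k}$. Combining with the step-$k$ flatness yields
\begin{equation*}
V_\infty(X-C\lambda_k^{2+\alpha}\bar{e}_n)\le g(X)\le V_\infty(X+C\lambda_k^{2+\alpha}\bar{e}_n)\quad\text{in } B_{\lambda_k},
\end{equation*}
and a dyadic interpolation between $\lambda_{k+1}$ and $\lambda_k$ extends this to every $0<\lambda<\lambda_0$, giving the first part of the theorem at the origin. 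Running the same argument at every $x_0\in F(g)\cap\mathcal{B}_{1/2}$ with uniform constants provides a pointwise quadratic approximation of $F(g)$ by a $C^{2,\alpha}$-regular family of quadratic surfaces, from which the ``In particular'' graph statement follows by the Campanato-type characterization of $C^{2,\alpha}$ functions.

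The main obstacle I anticipate is the coordinate bookkeeping: each step of Proposition \ref{impflat} delivers $\bar V$ in a new frame adapted to the normal of a new quadratic surface at the origin, and these frames must be reassembled into a single convergent frame $\bar E$ with total rotation controlled by a geometric sum derived from the ``moreover'' surface-separation estimate. Once this is handled, the comparisons via Lemma \ref{distance} between the approximating $V_k$ and the limit $V_\infty$ reduce to routine calculations.
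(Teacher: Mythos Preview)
Your approach is essentially the same as the paper's: iterate Proposition~\ref{impflat} at geometrically decreasing scales, extract a limiting $V_{\mathcal S_*,a_*,b_*}$ from the Cauchy sequence of parameters and surfaces, and then use Lemma~\ref{distance} together with the surface-separation bound from the ``moreover'' clause to replace each $V_k$ by the limit at scale $\lambda_k$. The only notable difference is in the initialization: the paper obtains the starting flatness $U(X-\tau^{2+\alpha}e_n)\le g_\mu\le U(X+\tau^{2+\alpha}e_n)$ self-containedly via Lemma~\ref{differentassumption} (a compactness argument), whereas you quote the $C^{1,\alpha}$ result of \cite{DR}; both are valid, but the paper's route keeps the argument independent of \cite{DR}. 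One small point you gloss over: to ensure each $V_k\in\mathcal V_1^0$ throughout the iteration you need $\sum_k C\lambda_k^\alpha\le 1$, which the paper secures by taking the initial scale $\tau$ small enough---you should add the analogous shrinking of $\lambda_0$.
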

\begin{proof}
It suffices to prove the theorem for any fixed $\alpha \in (0,1)$ for some $\bar \eps(\alpha)$, $C(\alpha)$ depending on $\alpha$. The dependence of $\bar \eps$ on $\alpha$ can be easily removed by fixing $\bar \eps:=\bar \eps(\bar \alpha)$, say with $\bar \alpha =1/2$. Then by the conclusion \eqref{trap1} for $\bar \alpha$, appropriate rescalings of $g$ satisfy the flatness assumption \eqref{ass1} also for $\bar \eps(\alpha)$ for any $\alpha \in (0,1)$. 

By Lemma \ref{differentassumption} the rescaling $$g_\mu(X)=\mu^{-\frac 12}g(\mu X)$$ satisfies 
$$U(X- \tau^{2+\alpha}e_n) \le g_\mu (X) \le U(X+\tau^{2+\alpha}e_n) \quad \mbox{in $B_{1}$,} $$
provided that $\bar \eps$, $\mu$ are chosen small depending on $\tau \le \lambda_0$, with $\lambda_0$ the universal constant in Proposition \ref{impflat} and $\tau$ small universal to be made precise later.
Thus $g_\mu$ satisfies in $B_{\tau}$ the hypotheses of Proposition \ref{impflat} with $M=0$, $a=0$, $b=0$. 
Then we can apply Proposition \ref{impflat} repeatedly  for all $\tau_k:=\tau \eta_0^k$ 
 since by choosing $\tau$ small enough we can guarantee that $\sum_k C \tau_k^\alpha \leq 1$ and hence the corresponding $M_k$, $a_k$, $b_k$ have always norm less than $1$. Thus we obtain
 \begin{equation}\label{iterate}V_{\mathcal S_k,a_k,b_k}(X- \tau_k^{2+\alpha}e^k_n) \le g_\mu (X) \le V_{\mathcal S_k,a_k,b_k}(X+\tau_k^{2+\alpha}e^k_n) \quad \mbox{in $B_{\tau_k}$.} \end{equation}

Using that $\mathcal S_k$ and $\mathcal S_{k+1}$ separate (in the $e_n$-direction) in $B_\sigma$ at most $C(\tau_k^\alpha \sigma^2 + \tau_k^{1+\alpha} \sigma)$
we conclude that as $k \to \infty$, the paraboloids $\mathcal S _k$ converge uniformly in $B_1$ to a limit parabolid $\mathcal S_*$. Moreover, $\mathcal S_*$ also separates from $S_k$ in $B_\sigma$ by at most  $C(\tau_k^\alpha \sigma^2 + \tau_k^{1+\alpha} \sigma)$ in the $e_n^*$ direction where $e_n^*$ is the normal to $S_*$ at the origin.
Finally, as $k \rightarrow \infty$, $a_k \to a_*$, $b_k \to b_*$, with $$|a_k - a_*|, |b_k-b_*| \le C \tau_k^{\alpha} .$$ 

Now notice that in $B_{2\tau_k}$, the paraboloids $S_k$ and $S_*$ separate at most $C \tau_k^{2+\alpha}$, thus we can apply Lemma \ref{distance} and use the inequality \eqref{iterate} to obtain
 $$V_{\mathcal S_*,a_*,b_*}(X-C\tau_k^{2+\alpha}e^*_n) \le g_\mu (X) \le V_{\mathcal S_*,a_*,b_*}(X+C\tau_k^{2+\alpha}e^*_n) , \quad \textrm{in $B_{\tau_k}$.}$$ Rescaling back we obtain the desired claim.
 
\end{proof}

\section{The regularity of the linearized problem}

We recall that the linearized problem associated to \eqref{FB} is 
\begin{equation}\label{linearnew}\begin{cases} \Delta (U_n h) = 0, \quad \text{in $B_1 \setminus P,$}\\ |\nabla_r h|=0, \quad \text{on $B_1\cap L$,}\end{cases}\end{equation}
where 
$$|\nabla_r h |(X_0) := \di\lim_{(x_n,s)\rightarrow (0,0)} \frac{h(x'_0,x_n, s) -h(x'_0,0,0)}{r}, \quad   r^2=x_n^2+s^2 .$$

In this section we obtain a second order expansion near the origin for a solution $h$ to \eqref{linearnew}.

\begin{thm}\label{class} Let  $h$ be a solution to \eqref{linearnew} such that $|h|  \leq 1$. Then $h$ satisfies
\begin{equation}\label{mainh}|h(X) - (h(0) +  \xi_0 \cdot x' + \frac 1 2 (x')^TM_0x'  -\frac{a_0}{2}r^2 -b_0rx_n)| \leq C |X|^{3},\end{equation} for some $a_0,b_0, \xi_0, M_0$ with $|\xi_0|, |a_0|,  |b_0|, \|M_0\| \leq C$, $C$ universal  and $$ a_0 +b_0 -tr M_0=0.$$ \end{thm}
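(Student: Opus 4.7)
The plan is to prove \eqref{mainh} by a Campanato-type iteration in which the role of "second-order Taylor polynomials" is played by the admissible family
$$p(X) = c + \xi \cdot x' + \tfrac{1}{2}(x')^T M x' - \tfrac{a}{2} r^2 - b r x_n, \qquad a + b - tr\, M = 0.$$
I would first verify that each such $p$ solves \eqref{linearnew}: the condition $|\nabla_r p| = 0$ on $L$ is immediate since both $r^2$ and $r x_n$ are quadratic in $r$, and a direct computation using $\Delta U_n = 0$ in $B_1 \setminus P$ together with the identities $\Delta(U_n r^2) = 2 U_n$, $\Delta(U_n r x_n) = U_n$ and $\Delta(U_n (x')^T M x') = 2 (tr\, M)\, U_n$ gives
$$\Delta(U_n p) = (tr\, M - a - b)\, U_n,$$
which vanishes exactly under the compatibility condition.

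The central step is a one-step improvement: there exist universal $\eta \in (0, 1/2)$ and $C_0$ such that every solution $h$ of \eqref{linearnew} in $B_1$ with $\|h\|_{L^\infty(B_1)} \le 1$ admits an admissible polynomial $p$ with $|c|, |\xi|, |a|, |b|, \|M\| \le C_0$ satisfying $\|h-p\|_{L^\infty(B_\eta)} \le \tfrac 12 \eta^3$. I would prove this by contradiction: a counterexample sequence $h_k$ is equibounded, hence (since $\Delta(U_n \cdot)$ is uniformly elliptic with smooth coefficients away from $L$) uniformly smooth on compacts of $B_1 \setminus L$. Near $L$, for each fixed $x'$ the function $U_n h_k(x',\cdot,\cdot)$ is bounded and harmonic in the 2D slit plane, even in $s$, and vanishes on the slit (since $U_n|_P=0$ and $h_k$ is bounded). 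The admissible even modes of this problem are $r^{m-1/2}\cos((m-\tfrac12)\theta)$ for $m=0,1,2,\ldots$; dividing by $U_n$ yields an expansion of $h_k$ into a constant term ($m=0$), a linear-in-$r$ term ($m=1$) forbidden by the viscosity boundary condition $|\nabla_r h_k|=0$, and quadratic-or-higher terms ($m\ge 2$) that reassemble into the admissible polynomial form $-\frac{a}{2}r^2 - b r x_n$ with a specific $a{:}b$ ratio at each scale. Combined with tangential $C^{0,\alpha}$ estimates in $x'$ (obtained by separating variables and applying Harnack to each angular mode), this gives uniform regularity of $h_k$ up to $L$. Passing to a locally uniform limit $h_\infty$ — still a viscosity solution by stability of the strict-touching condition in Definition \ref{linearsol} — and extracting its quadratic part yields an admissible $p_\infty$ with $\|h_\infty - p_\infty\|_{L^\infty(B_\eta)} = o(\eta^2)$ as $\eta\to 0$; the relation $a+b=tr\, M$ arises exactly from demanding that the $x'$-Hessian term and the $r$-quadratic term combine to leave $U_n p_\infty$ harmonic. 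This contradicts the standing assumption for $k$ large.

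Iteration is then routine. Applying the one-step lemma to the rescalings $\eta^{-3k}\bigl(h(\eta^k X) - p_{k-1}(\eta^k X)\bigr)$ — each of which is again a bounded solution of \eqref{linearnew} by linearity — produces new corrections with coefficients of order $C_0$, and summing the resulting geometric series in the (appropriately rescaled) coefficients yields a limit polynomial $p_\infty$ of admissible form with $|\xi_0|, |a_0|, |b_0|, \|M_0\| \leq C$ universal and $a_0 + b_0 - tr\, M_0 = 0$, satisfying $|h(X) - p_\infty(X)| \leq C|X|^3$, as required.

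The main obstacle I anticipate is the compactness step, specifically: (i) establishing uniform regularity of the $h_k$ up to the thin edge $L$, where the operator $\Delta(U_n\, \cdot)$ degenerates and one must lean on the explicit eigenfunction decomposition in the slit plane together with tangential $x'$-regularity; and (ii) stability of the viscosity condition $|\nabla_r h|=0$ under uniform convergence, which must be argued by perturbing the strict touching polynomial in Definition \ref{linearsol}. Once these are in place, the rest of the argument is the clean Campanato iteration already employed elsewhere in the paper.
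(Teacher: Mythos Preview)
Your approach is a Campanato-type iteration, whereas the paper proves the result directly. The paper's key step is the conformal map $z \mapsto \tfrac12 z^2$ from the half-plane $\{\zeta>0\}$ to the slit plane $\R^2\setminus\{t\le0,\,s=0\}$: for each fixed $x'$, setting $H(x_n,s)=U_n\, h(x',x_n,s)$ and pulling back by this map converts the degenerate problem into a standard Dirichlet problem $\Delta\tilde H=\zeta\tilde f$ on $\{\zeta>0\}$ with $\tilde H=0$ on $\{\zeta=0\}$, to which classical Schauder estimates apply. Reading off the Taylor expansion of $\tilde H$ at the origin (odd in $\zeta$, even in $y$) and translating back yields \eqref{mainh} directly. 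The tangential $x'$-regularity needed to feed this 2D analysis is imported from \cite{DR}.

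Your iteration has a structural gap. A Campanato scheme works when the limit problem in the compactness step is \emph{simpler} than the original --- constant-coefficient after freezing, or linear after linearizing --- so that the quadratic expansion of $h_\infty$ is already available. Here \eqref{linearnew} is already linear with translation-invariant structure, so your limit $h_\infty$ solves the \emph{same} problem as the $h_k$, and ``extracting its quadratic part'' with remainder $o(\eta^2)$ is precisely the theorem you are proving. You try to supply this via a 2D eigenfunction decomposition plus tangential regularity, but that analysis --- once made precise --- proves the theorem outright, and the Campanato wrapper becomes superfluous. In effect the entire content of your proposal is hidden in obstacle (i), which the paper resolves cleanly with the conformal transformation rather than a mode-by-mode expansion.
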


\begin{proof}
This proof is a refinement of Theorem 8.1 in \cite{DR} where the authors obtained a first order expansion for $h$, in particular 
\begin{equation}\label{DRexp}|h(X) - h(X_0)|=O (|X-X_0|), \quad X_0  \in L.\end{equation} Also in \cite{DR} it is shown that $h$ and its derivatives of all orders in the $x'$ direction are Holder continuous  with norm controlled  by a universal constant in $B_{1/2}$ (see Corollary 8.7.)

We wish to prove that 
\begin{equation}\label{exp2} |h(x',x_n,x) - h(x', 0,0) +  \frac{a(x')}{2}r^2 + b(x')x_n r| \leq C r^{3}, \quad (x',0,0) \in B_{1/2} \cap L, \end{equation} with $C$ universal and $h(\cdot, 0,0), a,b$ smooth functions of $x'$.

The function $h$ solves $$\Delta(U_n h) =0 \quad \text{in $B_1 \setminus P$}, $$ and since $U_n$ is independent on $x'$ we can rewrite this equation as
\begin{equation}\label{2dreduction} \Delta_{x_n,s} (U_n  h) = - U_n \Delta_{x'} h.\end{equation}
Moreover, since $\Delta_{x'}h$ solves the same linear problem as $h$ then any estimate for $h$ also holds for $\Delta_{x'} h.$

For each fixed $x'$, we investigate the 2-dimensional problem 
$$\Delta (U_t h) = U_t f, \quad \text{in $B_{1/2} \setminus \{t \leq 0, s=0\} \subset \R^2$}$$
with $h,f \in C^{0,\beta}$. Without loss of generality, for a fixed $x'$ we may assume $h(x', 0,0)=0.$
Thus in view of \eqref{DRexp}, the function $$H:=U_t h$$ is continuous in $B_{1/2} \subset \R^2$ and satisfies
$$\Delta H = U_t f \quad \text{in $B_{1/2} \setminus \{t \leq 0, s=0\}$, }\quad H=0 \quad \text{on $B_{1/2} \cap \{t \leq 0, s=0\}.$}$$

Now, we consider the holomorphic transformation $z \rightarrow \frac 1 2 z^2$
$$\Phi: (\zeta,y) \rightarrow (t,s) =(\frac 1 2 (\zeta^2-y^2), \zeta y)$$
which maps  $B_{1} \cap \{\zeta > 0\}$ into $B_{1/2} \setminus  \{t \leq 0, s=0\}$ and call

$$\tilde h(\zeta,y) = h(t,s), \quad \tilde f(\zeta,y) = f(t,s), \quad \tilde H(\zeta,y) = H(t,s)$$ with $(\tilde r, \tilde \theta),$ the polar coordinates in the $(\zeta,y)$ plane.
Then, easy computations show that
\begin{equation}\label{reflect}\Delta \tilde H = \zeta \tilde f \quad \text{in  $B_{1} \cap \{\zeta > 0\}$}, \quad \tilde H(\zeta,y)=\frac{\zeta}{\tilde r^2}\tilde h,\end{equation} and $$\tilde H = 0 \quad \textrm{on $\{\zeta=0\}$.}$$
Since the right-hand side is in $C^{0,\beta}$ and $\tilde h, \tilde f$ have the same regularity, we conclude from repeatedly applying Lemma \ref{odd} below that $\tilde h,\tilde f \in C^{\infty}$ with $$\|\tilde f\|_{C^{k, \beta}(B_{1/2}^+)}, \|\tilde h\|_{C^{k, \beta}(B_{1/2}^+)} \leq C(k, \beta).$$ 
Notice that we can reflect $\tilde H$ oddly and $\tilde h, \tilde f$ evenly across $\{\zeta=0\}$ and the resulting functions will still solve \eqref{reflect} in $B_1$. Moreover from our assumptions, $\tilde f$ and $\tilde h$ are even with respect to $y$. Thus, we conclude that the Taylor polynomials for $\tilde f, \tilde h$ around the origin, are polynomials in $\zeta^2, y^2.$  
Now we use the Taylor expansion for $\tilde H$ around 0, which is odd with respect to $\zeta$ and even in $y,$ that is  
$$\tilde H (\zeta,y) = \zeta(d_0+d_1\zeta^2+d_2y^2+O(\tilde{r}^4))$$ 
with $$6d_1+2d_2=\tilde f(0)=f(0).$$Thus,
$$ \tilde h (\zeta,y) = \tilde \zeta^2(d_0+d_1\zeta^2+d_2y^2+O(\tilde r^4)).$$ In terms of the $(t,s)$ coordinates this means that
$$h(t,s)= 2r(d_0+2d_1 r (\cos\frac{\theta}{2})^2  + 2d_2 r (\sin\frac{\theta}{2})^2) + O(r^3)=$$
$$=  2r(d_0+(d_1+d_2) r  + (d_1 - d_2)t ) + O(r^3)$$
$$=  2d_0r-\frac a 2 r^2  - btr  + O(r^3).$$

In conclusion, 

$$|h(X) - h(x',0,0) - 2d_0(x')r+\frac{a (x')}{2} r^2  + b(x')x_nr| \leq C r^3 \quad \text{in $B_{1/2} $}$$ with $C$ universal, $$\|a\|_{L^\infty(\{|x'|\leq 1/2\})}, \|b\|_{L^\infty(\{|x'|\leq 1/2\})} \leq C,$$ and $$a+b=\Delta_{x' }h(x',0,0).$$ Since $h$ solves \eqref{linearnew} we must also have $d_0(x')=0$ and hence
$$|h(X) - h(x',0,0) + \frac{a(x')}{2} r^2  + b(x')x_nr| \leq C r^3, \quad \text{in $B_{1/2} $}.$$ 

Notice that $a, b$ are smooth functions of $x'$ with all order derivatives bounded by appropriate universal constants. Indeed due to the linearity of the problem it is easy to see that $D^\beta_{x'} a, D^\beta_{x'}b $ are the corresponding $a$ and $b$ for $D^\beta_{x'}h$.
Writing the Taylor expansions at 0 for $h(x', 0, 0)$ up to order 2 and $a, b$ up to order 1 with $a_0=a(0)$, $b_0=b(0)$ we get
$$\left |h(X) - \left (h(0) + \xi_0\cdot x' + \frac 12 (x')^T M_0 x' - \frac{a_0}{2}r^2-b_0x_nr \right) \right| \leq C|X|^3.$$
\end{proof}

In our proof above we used the following easy lemma.

\begin{lem} \label{odd}Let $\tilde H=\tilde H(\zeta, y)$ be a function defined on $\overline{B_1^+} \subset \R^2$,  which vanishes continuously on $\{\zeta =0\}$. If $\tilde H \in C^{k,\alpha}(\overline{B_1^+})$, $k \in \mathbb N, \alpha \in (0,1]$, then $\zeta^{-1}\tilde H \in C^{k-1, \alpha}(\overline{B_1^+}),$ and 
$$ \|\zeta^{-1}\tilde H \|_{C^{k-1, \alpha}}\leq \|\tilde H\|_{C^{k,\alpha}}.$$
\end{lem}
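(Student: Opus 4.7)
The plan is to represent $\zeta^{-1}\tilde H$ by an explicit integral of $\partial_\zeta \tilde H$ along the segment joining $(0,y)$ to $(\zeta,y)$, and then read off its $C^{k-1,\alpha}$ regularity directly from the $C^{k,\alpha}$ regularity of $\tilde H$. Since $\tilde H(0,y)=0$ and $k \ge 1$ so that $\partial_\zeta \tilde H$ is continuous up to $\{\zeta=0\}$, the fundamental theorem of calculus in the $\zeta$ variable gives, for $(\zeta,y) \in \overline{B_1^+}$ with $\zeta>0$,
$$\tilde H(\zeta,y) = \int_0^\zeta \partial_\zeta \tilde H(\tau,y)\,d\tau = \zeta \int_0^1 \partial_\zeta \tilde H(t\zeta,y)\,dt,$$
so that
$$G(\zeta,y) := \zeta^{-1}\tilde H(\zeta,y) = \int_0^1 \partial_\zeta \tilde H(t\zeta,y)\,dt,$$
and the right-hand side extends continuously to $\zeta=0$ by uniform continuity of $\partial_\zeta \tilde H$.

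Next I would differentiate under the integral. For any multi-index $\beta=(\beta_\zeta,\beta_y)$ with $|\beta| \le k-1$, one obtains
$$\partial^\beta G(\zeta,y) = \int_0^1 t^{\beta_\zeta}\, (\partial^{\beta+e_\zeta}\tilde H)(t\zeta,y)\,dt,$$
which is legal because $\partial^{\beta+e_\zeta}\tilde H$ is continuous on $\overline{B_1^+}$ (as $|\beta|+1\le k$). The $L^\infty$ bound $\|\partial^\beta G\|_\infty \le \|\tilde H\|_{C^{k,\alpha}}$ is immediate. For the Hölder seminorm, given two points $(\zeta_i,y_i) \in \overline{B_1^+}$,
$$\bigl|\partial^\beta G(\zeta_1,y_1) - \partial^\beta G(\zeta_2,y_2)\bigr| \le \int_0^1 t^{\beta_\zeta}\,[\partial^{\beta+e_\zeta}\tilde H]_{C^\alpha}\,\bigl|\bigl(t(\zeta_1-\zeta_2),\,y_1-y_2\bigr)\bigr|^\alpha\,dt,$$
and since $t \in [0,1]$ the argument on the right is bounded by $|(\zeta_1-\zeta_2,y_1-y_2)|^\alpha$, giving $[\partial^\beta G]_{C^\alpha}\le [\tilde H]_{C^{k,\alpha}}$. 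Summing over $|\beta|\le k-1$ yields the claimed estimate.

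There is essentially no analytical obstacle: the only mildly delicate point is verifying that the formula for $G$ and its derivatives is valid all the way up to the boundary $\{\zeta=0\}$, which follows from the uniform continuity of $\partial^\gamma \tilde H$ for $|\gamma|\le k$ on the closed half ball, together with dominated convergence to justify differentiation under the integral sign.
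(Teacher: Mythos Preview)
Your proof is correct and follows exactly the same approach as the paper: the integral representation $\zeta^{-1}\tilde H(\zeta,y)=\int_0^1 \tilde H_\zeta(t\zeta,y)\,dt$ followed by differentiation under the integral sign. The paper's proof is a two-line sketch of precisely this argument, and you have simply filled in the details.
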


\begin{proof}
Since $\tilde H(0,y)=0$ we see that
$$\zeta^{-1}\tilde H(\zeta,y)=\int_0^1 \tilde H_\zeta(t\zeta,y)\,dt$$ and the lemma follows easily by taking derivatives in the equality above. 
\end{proof}

\section{Basic properties of a solution $g$.}

We collect here some useful  general facts about solutions $g$ to our free boundary problem \eqref{FB}, such as $C^{1/2}$-optimal regularity, asymptotic expansion near regular points of the free boundary and compactness.

First we recall some notation. Let $v \in C(B_1)$ be a non-negative function. We denote by $$B_1^+(v) := B_1 \setminus \{(x,0) : v(x,0) = 0 \} \subset \R^{n+1}$$
 and by $$F(v) := \p_{\R^n}(B_1^+(v) \cap \mathcal{B}_1)\cap \mathcal{B}_1 \subset \R^{n}.$$
Also, we denote by $P$ the half-hyperplane $$P:= \{X \in \R^{n+1} : x_n \leq 0, s=0\}.$$

Given a $C^2$ surface $\mathcal S$ in $\R^{n-1}$, we often work with functions of the form $V=V_{\mathcal S,a,b}$ (see Definition \ref{vS}). We remark that we can still apply the boundary Harnack inequality with $V$  in a neighborhood of $\mathcal S$ since in this set $V$ is comparable with a harmonic function $H$ with $F(H)=\mathcal S$.

 Indeed, after a dilation we may assume that $V=V_{\mathcal S,a,b}\in \mathcal V_\delta$, that is the curvatures of $\mathcal S$ in $B_2$ and $|a|$, $|b|$ are bounded by $\delta$ small, universal.
Let 
 $$V_1:=V_{\mathcal S, a-2n\delta,b}, \quad V_2:=V_{\mathcal S,a+2n \delta, b}$$
 and notice that $V_1$ is a supersolution and $V_2$ is a subsolution in $B_1$ (see Proposition \ref{sub}). Also  
 $$1/2 V_2 \le V \le 2 V_1\le 2 V_2,$$ hence there exists $H$ between $1/2V_2$ and $2V_1$, with $1/4 V \le H \le 4V$, $H$ harmonic in $\{H>0\}$ and $F(H)=\mathcal S$.

We obtain the following version of the boundary Harnack inequality. 

\begin{lem}\label{bhV} Let $V:=V_{\mathcal S, a, b} \in \mathcal V_{\delta_0}$, for some small $\delta_0$ universal and with $0 \in \mathcal S.$ Let $w \in C(\overline{B}_1)$ be a non-negative function which is harmonic in $B_1^+(w)$. If $B_1^+(V) \subset B_1^+(w)$ then 
$$w \geq c w(\frac 1 2 e_n) V, \quad \mbox{in $B_{1/2}$}.$$ If $B_1^+(V) \subset B_1^+(w)$ then 
$$w \leq C \|w\|_{L^\infty(B_1)} V,  \quad \mbox{in $B_{1/2}$}.$$
\end{lem}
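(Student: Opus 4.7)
The plan is to reduce the statement to the CFMS boundary Harnack inequality (the last theorem of Section 2.1) by replacing the model function $V$ with the harmonic comparison $H$ constructed in the paragraph preceding the lemma. That function is harmonic in $\{H>0\}$, has $F(H)=\mathcal{S}$, and satisfies $\tfrac14 V\le H\le 4V$ in $B_1$. Since $V\in\mathcal{V}_{\delta_0}$, the surface $\mathcal{S}$ is a $C^2$ graph in the $e_n$-direction with slope bounded by $C\delta_0<1$ passing through the origin, so $\mathcal{S}\cap\mathcal{B}_1$ is Lipschitz with universal constant and CFMS is applicable to any pair of non-negative harmonic functions in $B_1^+(V)$ that vanish continuously on $F(V)\cap\mathcal{B}_1$.

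For the upper bound, which must be read under the hypothesis $B_1^+(w)\subset B_1^+(V)$ (otherwise the claim fails at any point where $w>0=V$), the function $w$ is harmonic in $B_1^+(w)\subset B_1^+(V)=B_1^+(H)$ and vanishes continuously on $F(V)\cap\mathcal{B}_1$. Applying CFMS to $(w,H)$ on $B_1^+(V)$ gives $w\le C\,w(\tfrac12 e_n)\,H\le 4C\,\|w\|_{L^\infty(B_1)}\,V$ in $B_{3/4}$, which proves the estimate in $B_{1/2}$.

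For the lower bound under $B_1^+(V)\subset B_1^+(w)$, the function $w$ is non-negative and harmonic in $B_1^+(V)$ but its trace on $F(V)$ may be strictly positive, so CFMS does not apply to $(w,H)$ directly. I would trim $w$ by introducing the harmonic function $\tilde w$ in $B_1^+(V)$ with $\tilde w=w$ on $\partial B_1$ and $\tilde w=0$ on $F(V)\cap\mathcal{B}_1$; the maximum principle yields $0\le\tilde w\le w$, and CFMS applied to $(\tilde w,H)$ then gives $\tilde w\ge c\,\tilde w(\tfrac12 e_n)\,H\ge \tfrac{c}{4}\,\tilde w(\tfrac12 e_n)\,V$ in $B_{3/4}$.

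The main obstacle is to show $\tilde w(\tfrac12 e_n)\ge c'\,w(\tfrac12 e_n)$. For this, the difference $w-\tilde w$ is non-negative and harmonic in $B_1^+(V)$ with zero data on $\partial B_1$ and data $w$ on $F(V)\cap\mathcal{B}_1$; since $\tfrac12 e_n$ sits deep inside $B_1^+(V)$, its harmonic measure for $F(V)\cap\mathcal{B}_1$ is bounded by some $\lambda<1$ depending only on the Lipschitz geometry of $\mathcal{S}$, while an interior Harnack chain inside the larger positive phase $B_1^+(w)$ gives $\sup_{F(V)\cap\mathcal{B}_1} w \le C\,w(\tfrac12 e_n)$. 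Combining these estimates yields $(w-\tilde w)(\tfrac12 e_n)\le C\lambda\,w(\tfrac12 e_n)$, and hence $\tilde w(\tfrac12 e_n)\ge (1-C\lambda)\,w(\tfrac12 e_n)$ after reducing $\delta_0$ if necessary, which completes the proof.
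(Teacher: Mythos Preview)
Your overall architecture matches the paper's: replace $V$ by the comparable harmonic $H$, introduce a harmonic replacement of $w$ in the slit domain, and invoke CFMS boundary Harnack. For the upper bound your argument is essentially correct, modulo the minor point that $w$ is only \emph{subharmonic} in $B_1^+(V)$ (harmonic where positive, zero elsewhere); CFMS as stated requires harmonicity, but this is exactly what the harmonic replacement fixes.

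The genuine gap is in the final paragraph, in your estimate of $\tilde w(\tfrac12 e_n)$ from below. Two problems. First, the boundary data of $w-\tilde w$ lives on all of $\{V=0\}\cap B_1$, not just on $F(V)\cap\mathcal B_1$, and $w$ need not vanish below $\mathcal S$. Second, and more seriously, the claim ``an interior Harnack chain inside $B_1^+(w)$ gives $\sup_{F(V)}w\le C\,w(\tfrac12 e_n)$'' is not valid with a universal $C$: points of $\mathcal S$ (or of $\{V=0\}$) may lie arbitrarily close to $F(w)$, so the Harnack chain in $B_1^+(w)$ connecting them to $\tfrac12 e_n$ has unbounded length. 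Reducing $\delta_0$ does not help, since the harmonic measure $\lambda$ of the slit from $\tfrac12 e_n$ tends to a fixed positive constant as $\delta_0\to0$, while your $C$ is not controlled at all.

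The paper sidesteps this entirely by a much simpler device: define the replacement $\bar w$ on $B_{3/4}^+(V)$ rather than $B_1^+(V)$, and bound $\bar w(\tfrac12 e_n)$ \emph{directly from below} by the harmonic measure of a cap of $\partial B_{3/4}$ near $\tfrac34 e_n$ times $\inf w$ on that cap. The cap lies in a fixed compact subset of $B_1^+(V)\subset B_1^+(w)$, so interior Harnack for $w$ (with a \emph{universal} chain, independent of $F(w)$) gives $\inf w\ge c\,w(\tfrac12 e_n)$ there. No estimate of $w$ on $\{V=0\}$ is needed.
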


\begin{proof}
Let  $\bar w$ be the harmonic function in $B_{3/4}^+(V)$ with boundary value $w$ on $\p B_{3/4}$ and $\bar w =0$ where $\{V=0\}.$

 If $B_1^+(V) \subset B_1^+(w)$ then in view of the observation above we can apply the boundary Harnack inequality with $V$ and conclude that $$w \geq \bar w \geq c \,\bar w (\frac 1 2 e_n) V, \quad \mbox{in $B_{1/2}$}.$$ On the other hand, $$\bar w (\frac 1 2 e_n) \geq c \inf_{B_{1/4}(\frac 3 4 e_n) \cap \p B_{3/4}} \bar w.$$ Using that $w$ and $\bar w$ coincide on $\p B_{3/4}$ together with Harnack inequality we obtain our desired estimate.

 If $B_1^+(V) \subset B_1^+(w)$ then $$w \leq \bar w \leq C \bar w (\frac 12 e_n) V, \quad \mbox{in $B_{1/2}$}.$$ On the other hand,
 $$\bar w (\frac 1 2 e_n) \leq \|\bar w \|_{L^\infty(B_{3/4})}= \|w \|_{L^\infty(B_{3/4})}$$
which yields our conclusion.

\end{proof}

An immediate consequence is the following useful lemma.

\begin{lem}\label{wv1}
Let $V=V_{\mathcal Sa,b} \in \mathcal V_{\delta_0}$ be a subsolution in $B_1$, for some small universal $\delta_0$ and with $0\in S$. If $w$ is harmonic in $B_1^+(w)$ and  $B_1^+(V)\subset B_1^+(w)$ and $$w \ge V -\eps \quad \mbox{in $B_1$},$$then $$w \ge (1-C \eps)V \quad \mbox{in $B_{1/2}$}.$$ 
\end{lem}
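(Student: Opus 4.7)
The idea is to compare $V-w$ with a harmonic function that has $O(\eps)$ boundary data and then apply the upper boundary Harnack inequality of Lemma \ref{bhV}.

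Set $u:=V-w$. On $B_1^+(V)$, which by hypothesis is contained in $B_1^+(w)$, the function $w$ is harmonic while $V$ is a subsolution (hence $\Delta V\ge 0$), so $\Delta u\ge 0$: $u$ is subharmonic on $B_1^+(V)$. The boundary of $B_1^+(V)$ decomposes into the outer part on $\partial B_1$, where $u\le \eps$ by the assumption $w\ge V-\eps$, and the ``thin'' part $\{V=0\}\cap\overline{\mathcal B}_1$, where $V=0$ and $w\ge 0$ give $u\le 0$. Let $H$ be the harmonic function on $B_1^+(V)$ with boundary values equal to $\eps$ on $\partial B_1\cap\overline{B_1^+(V)}$ and $0$ on the thin zero set of $V$; the strong maximum principle gives $H>0$ in $B_1^+(V)$, so $B_1^+(H)=B_1^+(V)$. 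By the maximum principle $u\le H$ in $B_1^+(V)$.

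Now I would apply the second assertion of Lemma \ref{bhV} to $H$: since $H$ is harmonic on $B_1^+(H)=B_1^+(V)$ with $\|H\|_{L^\infty(B_1)}\le\eps$, we obtain
\[
H\le C\,\|H\|_{L^\infty(B_1)}\,V \le C\eps\,V \quad\text{in } B_{1/2}.
\]
Combining with $V-w=u\le H$ in $B_1^+(V)$ gives $V-w\le C\eps\,V$ on $B_{1/2}\cap B_1^+(V)$, i.e.\ $w\ge(1-C\eps)V$ there. On the complement $B_{1/2}\setminus B_1^+(V)$ one has $V=0$, so the inequality $w\ge (1-C\eps)V$ is trivial since $w\ge 0$. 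This yields the lemma.

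The only structural issue to verify is that Lemma \ref{bhV} can indeed be applied in this setting. The hypothesis $V\in\mathcal V_{\delta_0}$ with $0\in\mathcal S$ is precisely what we assume, and the inclusion $B_1^+(V)\subset B_1^+(H)$ needed in Lemma \ref{bhV} holds as equality by construction of $H$. I expect no substantial obstacle beyond this bookkeeping; the proof is a clean boundary Harnack argument applied to the error function $V-w$.
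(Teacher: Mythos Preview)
Your proof is correct and follows essentially the same approach as the paper: both introduce the harmonic barrier with boundary value $\eps$ (the paper writes it as $\eps q$ with $q$ having boundary value $1$, and works in $B_{3/4}$ rather than $B_1$), bound $V-w$ above by it via the maximum principle, and then invoke Lemma~\ref{bhV} to conclude that the barrier is $\le C\eps V$ in $B_{1/2}$.
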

\begin{proof}Let $q:B_{3/4} \to \R$ be the harmonic function in $B_{3/4} \cap B^+_1(V)$ which has boundary values $q=1$ on $\p B_{3/4}$ and $q=0$ on the set where $V=0$. From our hypotheses on $w$ and the maximum principle
we obtain $$w \ge V - \eps q \quad \mbox{in} \quad B_{3/4}.$$ On the other hand by Lemma \ref{bhV}, since $B_{3/4}^+(V) = B_{3/4}^+(q)$ we have $q \le C V$ in $B_{1/2},$ which together with the inequality above implies the desired result.

\end{proof}
\begin{rem}\label{wv2}
From the proof of Lemma \ref{wv1} we see that if the hypotheses on $w$ hold only outside of the ball $B_{1/8}$, i.e $$w \ge V-\eps \quad \mbox{on} \quad B_1 \setminus B_{1/8}, \quad \quad \mbox{$w$ harmonic in} \quad B_1^+(V) \setminus B_{1/8}$$ then the conclusion holds in the shell $B_{3/4}\setminus B_{1/4}$.
\end{rem}

Next we prove optimal $C^{1/2}$ regularity for viscosity solutions.

\begin{lem}[$C^{1/2}$-Optimal regularity]\label{optimal}Assume $g$ solves \eqref{FB} in $B_1$ and $0 \in F(g)$. Then $$g(x,0) \le C |d(x)|^{1/2} \quad \mbox{in $\mathcal B_{1/2}$}$$
where $d(x)$ represents the distance from $x$ to $F(g)$. Also
$$\|g\|_{C^{1/2}(B_{1/2})} \le C (1+g(\frac 1 2 e_{n+1})).$$ 
\end{lem}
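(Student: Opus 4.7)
The plan proceeds in three steps. Set $M := 1 + g(\tfrac12 e_{n+1})$. The first step is the uniform $L^\infty$ bound $\|g\|_{L^\infty(B_{3/4})} \le C M$: since $g\ge 0$ is harmonic in $B_1 \setminus \{(x,0) : g(x,0)=0\}$, which contains $B_1 \setminus \{s=0\}$, and even in $s$, a standard Harnack chain in $B_{7/8} \cap \{s>0\}$ starting from $\tfrac12 e_{n+1}$ controls $g$ on any compact subset, and evenness together with continuity extend the bound across $\{s=0\}$.

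The heart of the argument is the growth estimate $g(x,0) \le C d(x)^{1/2}$ for $x \in \mathcal{B}_{1/2}$ (with $C$ allowed to depend on $M$). Fix such $x_0$ with $g(x_0,0)>0$, put $d := d(x_0)$, and pick $y_0 \in F(g)$ realizing this distance. Rescale via $\bar g(X) := d^{-1/2} g(y_0 + dX)$, which is a viscosity solution of \eqref{FB} on $B_{1/(2d)}(0)$ with $0 \in F(\bar g)$; the estimate reduces to bounding $\bar g(\bar X_0,0)$ by a universal constant, where $\bar X_0 := (x_0-y_0)/d \in \partial B_1$. I argue by contradiction: suppose $\bar g(\bar X_0,0) \ge N$ for some large $N$. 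The rescaled ball $B_1(\bar X_0)$ is disjoint from $\{\bar g(\cdot,0)=0\}$ by the choice of $d$, so $\bar g$ is positive harmonic there, and Harnack gives $\bar g \ge c_0 N$ on $B_{1/4}(\bar X_0)$. Next, I construct a strict comparison subsolution from Section 3: the scaled multiple $W := \kappa U$ with $\kappa > 1$ is harmonic on its positive phase and has $\alpha$-coefficient $\equiv \kappa > 1$ on $F(W)$, so $W$ satisfies Definition \ref{defsub}. Translating and rotating $W$ so that $F(W)$ is properly situated relative to $F(\bar g)$ and $W < \bar g$ initially where $W>0$, I slide $W$ by pushing $F(W)$ into $\{\bar g>0\}$ until the first contact. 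The contact cannot occur in the interior (strong maximum principle for two harmonic functions) nor on $F(\bar g)$ (the viscosity property forbids touching by a strict subsolution with $\alpha > 1$), a contradiction that bounds $N$ universally.

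For the $C^{1/2}$ estimate on $B_{1/2}$, combine the $L^\infty$ bound with the boundary growth estimate via a Campanato-type iteration: for $X \in B_{1/2}$, set $\rho := \text{dist}(X, \{(x,0):g(x,0)=0\})$; then $g$ is harmonic in $B_\rho(X)$ with $\sup_{B_{2\rho}(X)} g \le CM \rho^{1/2}$ (by the growth estimate applied at nearby boundary points together with the $L^\infty$ bound), so interior gradient estimates give $|\nabla g|(X) \le CM\rho^{-1/2}$. Integrating this bound along a path that shrinks to the zero set at speed proportional to $\rho$ yields $\|g\|_{C^{1/2}(B_{1/2})} \le C(1+g(\tfrac12 e_{n+1}))$.

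The main obstacle will be the sliding subsolution step. I must arrange that the first touching point between $W$ and $\bar g$ lies on $F(\bar g)$ rather than on $\partial B_1$ or in the interior, and that $W$ can be initialized below $\bar g$ while still vanishing on the correct side of $F(\bar g)$. The additional flexibility provided by the family $V_{\mathcal{S}, a, b}$ of Section 3 (with curved $\mathcal{S}$) will likely be needed to conform $F(W)$ to the a priori unknown geometry of $F(\bar g)$ near the origin, and the strict subsolution inequality $a+b-\text{tr } M > 0$ from Proposition \ref{sub} supplies the required $\alpha > 1$ margin in a quantitative, scale-invariant way.
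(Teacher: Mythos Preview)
Your Step 1 has a genuine gap. A Harnack chain in $B_{7/8}\cap\{s>0\}$ controls $g$ only on compact subsets of $\{s>0\}$; the chain length, and hence the constant, blows up as you approach $\{s=0\}$. Continuity alone does not convert this into a uniform bound across $\{s=0\}$. In fact the $L^\infty$ bound $\|g\|_{L^\infty(B_{3/4})}\le CM$ is essentially a weak form of the conclusion you are trying to prove, so placing it first is circular. This gap then propagates to Step 3, where you invoke $\sup_{B_{2\rho}(X)}g\le CM\rho^{1/2}$: the growth estimate only controls $g$ on $\{s=0\}$, and the portion of $B_{2\rho}(X)$ in $\{s>0\}$ is exactly what your unproven Step 1 was supposed to handle.

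The paper reverses the order and avoids the issue entirely. First it proves the growth estimate on $\{s=0\}$ with a \emph{universal} constant (not depending on $M$): after rescaling so that $\mathcal B_1(e_n)\subset B_2^+(g)$ and $0\in F(g)$, Lemma~\ref{bhV} (boundary Harnack) gives directly $g\ge c\,g(e_n)\,V_{\partial\mathcal B_1(e_n),\,2n,\,0}$ near $0$, with the right-hand side a strict subsolution whose free boundary touches $F(g)$ at $0$; the viscosity condition then forces $c\,g(e_n)\le 1$. No sliding is needed---boundary Harnack replaces your entire Step 2 barrier construction in one stroke. For the full $C^{1/2}$ bound the paper writes $g=g_0+g_1$ in $D=B_{3/4}\cap\{s>0\}$, where $g_0$ carries the trace on $\{s=0\}$ (hence $\|g_0\|_{C^{1/2}}\le C$ universal, by the growth estimate just proved) and $g_1$ vanishes on $\{s=0\}$ (hence $\|g_1\|_{C^{1/2}}\le C\,g_1(\tfrac12 e_{n+1})\le C\,g(\tfrac12 e_{n+1})$ by boundary Harnack in the half-space). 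This decomposition is precisely the device that separates the universal contribution from the $M$-dependent one, and it is what your Campanato scheme is missing.
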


\begin{proof} 
The first assertion follows in a standard way from the free boundary condition. By scaling, we need to show that if $g$ is defined in $B_2$, $0 \in F(g)$ and $\mathcal B_1(e_n)\subset B_2^+(g)$ then $u(e_n) \le C$ for some large $C$ universal. 

By a rescaled version of Lemma \ref{bhV} and Harnack inequality we have that in a neighborhood of $0$, $$g \ge c g(e_n) V_{\mathcal S, 2n, 0}, \quad \mathcal S=\p \mathcal B_1(e_n)$$ with $V_{S,2n,0}$ a subsolution near 0. The free boundary condition gives $1 \ge c g(e_n)$ which provides a bound for $g(e_n)$.

For the second inequality we write 
$$g=g_0+g_1 \quad \mbox{in $D:=B_{3/4} \cap \{s > 0\}$},$$
with  $g_0,g_1$ harmonic in $D$ and satisfying the following boundary conditions
$$g_0 = g \quad \mbox{on $\{s=0\} \cap \p D$}, \quad g_0=0 \quad \mbox{on $\{s>0\} \cap \p D$},$$
$$g_1 = 0 \quad \mbox{on $\{s=0\} \cap \p D$}, \quad g_1=g \quad \mbox{on $\{s>0\} \cap \p D$}.$$

From our estimate for $g$ on $\{s=0\}$, we obtain
$$\|g_0\|_{C^{1/2}(B_{1/2}\cap D)} \le C\|g\|_{C^{1/2}(\mathcal B _{3/4})}\le C,$$
which together with the bound
$$\|g_1\|_{C^{1/2}(B_{1/2}\cap D)} \le C g_1(\frac 12 e_{n+1}) \le C g(\frac 12 e_{n+1}),$$
gives the desired conclusion.

\end{proof}

Next we prove that if $F(g)$ admits a tangent ball at 0 either from the positive or from the zero phase, then $g$ has an asymptotic expansion of order $o(|X|^{1/2})$. This expansion also justifies our definition of viscosity solution to the free boundary problem \eqref{FB}. We remark however that this expansion holds also for an arbitrary harmonic function $w$ which does not necessarily satisfy the free boundary condition.

\begin{lem}[Expansion at regular points from one side]\label{oneside}
Let $w \in C^{1/2}(B_1)$ be 1/2-Holder continuous, $w \ge 0$, with $w$ harmonic in $B_1^+(w)$. 
If $$0 \in F(w), \quad \mathcal{B}_{1/2}(1/2 e_n) \subset B_1^+(w),$$
then 
$$w= \alpha U + o(|X|^{1/2}), \quad \mbox{for some $\alpha>0$}.$$ 
The same conclusion holds for some $\alpha \ge 0$ if $$\mathcal{B}_{1/2}(-1/2 e_n) \subset \{w=0\}.$$
\end{lem}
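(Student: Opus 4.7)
The plan is to prove the expansion by a blow-up/compactness argument combined with a Liouville-type classification. I would first introduce the rescalings $w_r(X) := r^{-1/2} w(rX)$ for $r \in (0,1/2)$. Since $w \in C^{1/2}(B_1)$ and $w(0) = 0$, we have $|w_r(X)| \le [w]_{C^{1/2}(B_1)} |X|^{1/2}$ uniformly in $r$, so $\{w_r\}$ is locally equibounded and, by interior estimates on $\R^{n+1} \setminus \{s=0\}$, locally equicontinuous, hence precompact in $C^0_{\mathrm{loc}}(\R^{n+1})$. Along a subsequence $r_k \to 0$ I extract a limit $w_0$ which is non-negative, continuous, satisfies $w_0(0)=0$ and $w_0(X) \le C|X|^{1/2}$, and is harmonic on $\R^{n+1} \setminus \{s=0\}$ since each $w_r$ is.

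The crucial step is to upgrade harmonicity of $w_0$ across $\{x_n > 0,\ s=0\}$ and to control $w_0$ on $P$. The rescaled tangent-ball hypothesis places $\mathcal B_{1/(2r)}(\pm \tfrac{1}{2r}e_n)$ inside $B_1^+(w_r)$ (first scenario) or inside $\{w_r = 0\}$ (second scenario), and these balls exhaust $\{\pm x_n > 0,\ s=0\}$ as $r \to 0$. In the first scenario this makes $w_0$ harmonic on $\R^{n+1} \setminus P$. In the second, $w_0 \equiv 0$ on $\mathrm{int}(P)$, and hence on $P$ by continuity; either $w_0 \equiv 0$ (and $\alpha = 0$ works) or the strong maximum principle forces $w_0 > 0$ on $\R^{n+1} \setminus P$ with $w_0$ harmonic there. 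I expect pinning down the correct harmonicity domain for $w_0$ in the first scenario, and ruling out spurious positivity of $w_0$ on $P$, to be the main technical point.

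Once $w_0$ is non-negative and harmonic on $\R^{n+1} \setminus P$, continuous on $\R^{n+1}$ with $w_0(0)=0$ and growth $O(|X|^{1/2})$, I would invoke a Liouville-type classification to conclude $w_0 = \alpha U$ with $\alpha \ge 0$. Following the idea of Theorem \ref{class}, applying the 2D conformal change $z \mapsto z^2$ in the $(x_n, s)$-plane turns $w_0$ (viewed with $x'$ fixed) into a harmonic function on a half-plane vanishing on its boundary; odd reflection yields a globally defined harmonic function of at most linear growth, and standard Liouville then forces linear dependence on the reflected variables. The growth bound combined with unique continuation in $x'$ rules out any nontrivial $x'$-dependent correction and leaves $w_0 = \alpha U$.

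In the first scenario, strict positivity $\alpha > 0$ comes from a matching lower barrier: by Proposition \ref{sub} the function $V_{\mathcal S', a', b'}$ with $\mathcal S' = \partial \mathcal B_{1/2}(\tfrac12 e_n)$ and suitable coefficients is a comparison subsolution, and Lemma \ref{bhV} (or Lemma \ref{wv1}) yields $w \ge c_0 V_{\mathcal S', a', b'}$ near $0$; passing to the blow-up and using Proposition \ref{estimate} to identify the rescaled limit of $V_{\mathcal S', a', b'}$ with $U$ gives $w_0 \ge c_0 U$, so $\alpha \ge c_0 > 0$. Finally, uniqueness of $\alpha$ across subsequences, which upgrades the subsequential convergence to the full statement $w_r \to \alpha U$ in $C^0_{\mathrm{loc}}$ (equivalently $w = \alpha U + o(|X|^{1/2})$), is a standard consequence of the rigidity of the classified limit, e.g.\ via an Almgren- or Weiss-type monotonicity formula ensuring that $r^{-1/2} w(r e_n)$ has a limit as $r \to 0^+$.
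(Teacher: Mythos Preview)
Your blow-up strategy is natural, but the two steps you flag as ``standard'' are exactly where the argument breaks down.

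\textbf{The Liouville step in the first scenario.} When the tangent ball lies in the positive phase, you have no information about $\{w=0\}$ on the other side. After blow-up you know $w_0$ is harmonic in $\R^{n+1}\setminus P$, nonnegative, continuous, with $w_0(0)=0$ and $|w_0|\le C|X|^{1/2}$, but you do \emph{not} know $w_0\equiv 0$ on $P$. Without that, the conformal-map argument does not force $w_0=\alpha U$: after $z\mapsto z^2$ you get a nonnegative harmonic function on a half-plane with linear growth and continuous boundary data that is merely $O(|y|)$ and vanishes at the origin, which does not pin down the boundary data as zero. So the classification fails as stated.

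\textbf{Uniqueness of the limit.} You appeal to an Almgren- or Weiss-type monotonicity formula to rule out different $\alpha$'s along different subsequences, but no such formula is available for a general $w$ as in the lemma. Weiss monotonicity uses the free boundary condition, which $w$ need not satisfy; Almgren frequency for the thin obstacle problem requires evenness in $s$ and a variational structure that is not assumed here. This is a genuine gap, not a routine step.

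The paper's proof sidesteps both problems by a different mechanism. It \emph{defines} $\alpha$ a priori as $\inf_{\nu\notin P}\liminf_{t\to 0^+} (w/U)(t\nu)$, shows $\alpha>0$ via Lemma~\ref{bhV}, and then argues by contradiction: if the expansion fails along $y_k\to 0$, any subsequential blow-up $w_*$ satisfies $w_*\ge \alpha U$ (from the definition of $\alpha$) with strict inequality at some point. Boundary Harnack then gives $w_*\ge \alpha(1+\delta_2)U$ on $B_1$. The key move is to transfer this improvement back to $w$ itself: one chooses a subsolution $V=V_{\frac{\delta_2}{2n}I,0,\delta_2,0}$ with $V\le (1+\tfrac{\delta_2}{2})U$, so $w_*\ge \alpha(1+\tfrac{\delta_2}{4})V$, and then Lemma~\ref{wv1} (using only that the rescalings $w_k$ are harmonic in $\{V>0\}$, which \emph{is} guaranteed by the tangent ball from the positive side) gives $w_k\ge \alpha(1+\tfrac{\delta_2}{8})V$ for large $k$. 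This contradicts the minimality in the definition of $\alpha$. No Liouville classification and no monotonicity formula are needed; the argument only compares $w_*$ to $\alpha U$ from below and uses the subsolution $V$ as a barrier.
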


\begin{proof}

We define $$\alpha:=\inf_{\nu \notin P} \liminf_{t \to 0^+} \frac {w}{U}(t \nu).$$

First we notice that $\alpha>0$. Indeed, by a rescaled version of Lemma \ref{bhV} $$w\ge c \, w(\frac 1 2 e_n) V_{\mathcal S,0,0}, \quad \mathcal S= \p \mathcal B_{\frac 12}(\frac 1 2 e_n)$$ near the origin, for some $c>0$. This implies that $\alpha \ge c \, w(\dfrac 1 2 e_n) >0$.

Assume by contradiction that the conclusion of the lemma does not hold with this choice of $\alpha$. Then there exist $\delta_1>0$ and a sequence of points $y_k \to 0$ such that
\begin{equation}\label{wyk}|w(y_k) -\alpha U(y_k)| \ge \delta_1|y_k|^{1/2}.\end{equation}
Since $w$ is 1/2-Holder continuous on $B_1$, the rescalings
$$w_k(x):=|y_k|^{-\frac 12}w(|y_k| x),$$
are uniformly 1/2 Holder continuous and
after passing to a subsequence we can assume that $w_k$ converge uniformly on compact 
sets to a limiting function $w_* \in C(\mathbb{R}^n)$.  We obtain
$$w_* \ge \alpha U, \quad \Delta w_*=0 \quad \mbox{in $\R^n \setminus P$},$$
and in view of \eqref{wyk} there exists a point $y_*$, $|y_*|=1$ such that
$$w_*(y_*) \ge \alpha U(y_*) + \delta_1.$$
Using boundary Harnack inequality we find \begin{equation}\label{w*}w_* \ge \alpha (1+ \delta_2) U \quad \mbox{in $B_1$},\end{equation}
for some $\delta_2>0$ small. Now we let
$$V=V_{\frac {\delta_2}{2n}I, 0,\delta_2,0}$$ and we notice that $V$ is subharmonic in $B_1$ (by Proposition \ref{sub}) and satisfies
$$V(X)=v_{\delta_2,0}(t,s)=(1+\frac{\delta_2}{4}\rho)U(t,s) $$ $$\le (1+ \frac{\delta_2}{2}) U(t,s) \le (1+\frac{\delta_2}{2})U(x_n,s).$$ Thus \eqref{w*} gives,
\begin{equation}\label{w*new}w_* \ge \alpha (1+\frac{\delta_2}{4}) V \quad \mbox{in $B_1.$}\end{equation}
From the existence of a tangent ball at the origin included in $\{w>0\}$ we see that for all large $k$, $w_k$ is harmonic in the set where $\{V>0\}$. Thus we conclude from \eqref{w*new} that in $B_1$
$$w_k \ge \alpha (1+\frac{\delta_2}{4})V - \eps_k, 
\quad \mbox{for some} \quad \eps_k \to 0.$$
By Lemma \ref{wv1} we find that for all large $k$, $$w_k \ge (1-C \frac{\eps_k}{\alpha} ) \alpha (1+\frac{\delta_2}{4})V \ge \alpha (1+\frac{\delta_2}{8})V \quad \mbox{ in $B_{1/2}$.}$$
 This implies that for any $\nu \notin P$
$$  \liminf_{t \to 0^+}\frac {w}{U}(t \nu)=\liminf_{t \to 0^+}\frac {w_k}{U}(t \nu) \ge \alpha (1+\frac{\delta_2}{8}),$$
which contradicts the minimality of $\alpha$.
\end{proof}

\begin{rem}\label{noc12} If we assume that $F(w)$ admits a uniform tangent ball from its $0$ side at all points in $\mathcal B_{1/2}$ then the hypothesis $w \in C^{1/2}(B_{1/4})$ is satisfied and therefore $w$ has an expansion at all points in $F(w) \cap \mathcal B_{1/4}$. Indeed, by Lemma \ref{bhV} we know that $$w \le C\|w\|_{L^\infty} V_{\mathcal \p \mathcal B_r(x_0),0,0}$$ with $\p \mathcal B_r(x_0)$ a tangent sphere to $F(w)$ from the $0$ side, and this implies
$$w(x) \le C\|w\|_{L^\infty} \,dist(x, \{w=0\})^{1/2}, \quad \quad \forall x \in B_{1/4},$$
which gives $w \in C^{1/2}(B_{1/4})$.

\end{rem}

In general, the term $o(|X|^{1/2})$ in the expansion for $w$ can be improved in $o(U)$ in the non-tangential direction to $F(w)$.
For example assume that $0 \in F(w) \in C^2$ and $e_n$ is the normal to $F(w)$ at $0$ which points towards the positive phase. Then the  non-tangential limit
$$\lim_{x \in \mathcal C, \, x \to 0} \, \, \,  \frac {w}{U}=\alpha  $$ 
where $\mathcal C \subset \R^n \setminus P$ is a cone whose closure does not contain $L=\{x_n=0, s=0\}$. 

Indeed, by Lemma \ref{oneside} and Remark \ref{noc12} we have that $w=\alpha U + o(|X|^{1/2})$. Now the limit above follows  by applying boundary Harnack inequality for $U$ and $w$ in the sets $\mathcal C_1 \cap (B_r \setminus B_{r/2})$ for all $r$ small, where $$\mathcal C_1:=\{ |x'| > \mu |(x_n,s)|\}$$ is such that $\overline {\mathcal C} \subset \mathcal C _1 \cup \{0\}$.

\begin{rem}
\label{newdef} In the definition of viscosity  solutions for our free boundary problem (see Definition 2.3) we can restrict the test functions only to the class  of subsolutions and supersolutions of the form $cV_{\mathcal S,a,b}.$ 

Precisely we say that $g$ is a solution to \eqref{FB} if 

1) $\triangle g =0$ in $B_1^+(g);$ 

2) for any point $X_0 \in F(g)$ there exists no  $V_{\mathcal S,a,b}$ such that in a neighborhood of $X_0$, $V_{\mathcal S, a, b}$ is a subsolution and 
$$ g \ge \alpha V_{S,a,b}, \quad \mbox{for some $\alpha >1$}$$ with  $\mathcal S$ touching strictly $F(g)$ at $X_0$ from the positive side.

Analogously there is no supersolution $V_{\mathcal S,a,b}$ such that
$$ g \le \alpha V_{S,a,b}, \quad \mbox{for some $\alpha <1$}$$ and $\mathcal S$ touches strictly $F(g)$ at $X_0$ from the $0$ side.

In order to prove this statement we need to show that if we can touch $g$ by below at a point $X_0\in F(g)$ with a comparison subsolution $v$ as in Definition 2.2, then we can touch also with a subsolution $\alpha V_{\mathcal S,a,b}$ as above. A similar statement holds for supersolutions. 

Assume for simplicity that $X_0=0$, $e_n$ is normal to $F(v)$ at 0 and $g \ge v$ in $\overline B_1$. Let $\bar v$ be the harmonic replacement for $v$ in $B_1^+(v)$. In view of of Remark \ref{noc12}$$\bar v=\alpha U+o(|X|^{1/2}), \quad \mbox{for some $\alpha\geq 1$.}  $$ We claim that $\alpha >1.$ Indeed,  $\bar v -v \geq 0$ is superharmonic in $B_1^+(v)$ and vanishes continuously on $\{v=0\} \cap B_1.$  If $\bar v -v \equiv 0$, then our claim follows from the definition of a comparison subsolution. Otherwise, by the boundary Harnack inequality $$\bar v - v \geq \sigma \bar v $$ in a neighborhood of the origin, for some $\sigma>0$. Thus $\bar v \geq v/(1-\sigma)$ near the origin and again the claim follows from the expansion of $v$ at the origin.   

The rescalings $v_k=r_k^{-1/2}v(r_kx)$ converge uniformly on compact sets to $\alpha U$, with $\alpha >1$. As in the proof  of Lemma \ref{oneside} we obtain that there exists $\delta$ small such that for all large $k$, $$v_k \ge V:=(1+\delta) V_{\frac {\delta} {2n}I,0,\delta,0}$$ and $F(V)$ touches strictly $F(v_k)$ at the origin from the positive side. Rescaling back we obtain the desired conclusion.

\end{rem}

Next we prove a compactness result for viscosity solutions to \eqref{FB} whose free boundaries converge in the Hausdorff distance.

\begin{prop}[Compactness]\label{compactness}
Assume $g_k$ solve \eqref{FB} and converge uniformly to $g_*$ in $B_1$, and $\{g_k =0\}$ converges in the Hausdorff distance to $\{g_*=0\}$. Then $g_*$ solves \eqref{FB} as well.
\end{prop}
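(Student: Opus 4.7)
First, I verify that $g_*$ is harmonic on $B_1^+(g_*)$. For any compact $K\subset B_1^+(g_*)$, the set $K$ has positive distance to $\{(x,0): g_*(x,0)=0\}$, so by Hausdorff convergence of the zero sets one has $K\subset B_1^+(g_k)$ for all $k$ large. Hence $\Delta g_k=0$ on $K$, and the uniform convergence yields $\Delta g_*=0$ on $K$.

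For the free boundary condition I argue by contradiction, using the equivalent formulation of viscosity solutions in Remark \ref{newdef}. Suppose there exist $X_0\in F(g_*)$, a subsolution $V=V_{\mathcal S,a,b}$ in a neighborhood $O$ of $X_0$, and $\alpha>1$ such that $g_*\ge \alpha V$ in $O$ and $\mathcal S$ touches $F(g_*)$ strictly at $X_0$ from the positive side. Let $\nu$ be the normal to $\mathcal S$ at $X_0$ pointing into the positive phase of $V$, and for $\sigma\ge 0$ set $\mathcal S_\sigma:=\mathcal S+\sigma\nu$ and $V_\sigma:=V_{\mathcal S_\sigma,a,b}$. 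Since translation preserves $a$, $b$ and the curvatures of $\mathcal S$, each $V_\sigma$ is still a subsolution by Proposition \ref{sub}, and the monotonicity of $v_{a,b}$ in the first variable gives $V_\sigma\le V$ pointwise. Choose $r>0$ with $B_r(X_0)\subset O$, and $\sigma_0>0$ small enough so that the strictness of $\mathcal S$ touching $F(g_*)$ forces $\mathcal S_{\sigma_0}\cap\overline{B_r(X_0)}$ to lie in $\{g_*>0\}$ at uniformly positive distance from $F(g_*)$; the Hausdorff convergence of the zero sets then gives $\mathcal S_{\sigma_0}\cap\overline{B_r(X_0)}\subset B_1^+(g_k)$ for all $k$ large. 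Sliding $\sigma$ continuously from $\sigma_0$ down to $0$, and using that $F(g_k)\to F(g_*)$ in Hausdorff distance together with $X_0\in F(g_*)\cap\mathcal S$, I find a first value $\sigma_k^*\in[0,\sigma_0]$ at which $\mathcal S_{\sigma_k^*}$ contacts $F(g_k)\cap\overline{B_r(X_0)}$ at some point $Y_k\to X_0$; by construction the contact is from the positive side of $\mathcal S_{\sigma_k^*}$. A further arbitrarily small perturbation of $\mathcal S_{\sigma_k^*}$ (e.g.\ adding a tiny concave quadratic and absorbing the induced curvature change into a slight increase of $a$, which preserves the subsolution condition of Proposition \ref{sub}) makes this contact strict at the single point $Y_k$.

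It remains to verify the multiplicative lower bound $g_k\ge\alpha_k V_{\sigma_k^*}$ with $\alpha_k>1$ near $Y_k$, which together with the strict one-sided touching contradicts Remark \ref{newdef} applied to $g_k$. From $g_*\ge\alpha V\ge\alpha V_{\sigma_k^*}$ and uniform convergence one has $g_k\ge\alpha V_{\sigma_k^*}-\epsilon_k$ on a small ball about $Y_k$, with $\epsilon_k\to 0$; the strict one-sided touching together with Hausdorff convergence guarantees the inclusion $B_1^+(V_{\sigma_k^*})\cap(\text{small ball about }Y_k)\subset B_1^+(g_k)$. Applying a translated and rescaled version of Lemma \ref{wv1} with $w=g_k$ and with $\alpha V_{\sigma_k^*}$ playing the role of the subsolution yields $g_k\ge(1-C\epsilon_k)\alpha V_{\sigma_k^*}$, and since $\alpha>1$, for $k$ large $(1-C\epsilon_k)\alpha>1$, delivering the contradiction. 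The principal obstacle is precisely this bootstrap from the additive lower bound given by uniform convergence to the multiplicative bound required by Remark \ref{newdef}; this is where Lemma \ref{wv1} is essential, and it relies on a careful choice of $r$ and $\sigma_0$ to guarantee both the inclusion of positive phases and the applicability of the boundary Harnack type estimate underlying the lemma.
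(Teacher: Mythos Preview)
Your overall strategy---reduce to Remark \ref{newdef}, then use Lemma \ref{wv1} to upgrade the additive bound $g_k\ge\alpha V-\eps_k$ coming from uniform convergence to a multiplicative one---is the same as the paper's. There is, however, a genuine gap in your sliding step: you assert $\sigma_k^*\in[0,\sigma_0]$, but nothing prevents $\{g_k=0\}\cap\overline{B_r(X_0)}$ from lying entirely on the zero side of $\mathcal S$ (i.e.\ in $\{V=0\}$), in which case $\mathcal S_\sigma$ for $\sigma\ge0$ never meets $\{g_k=0\}$ and the first contact occurs only at some $\sigma_k^*<0$. Your subsequent inequality $V\ge V_{\sigma_k^*}$ then fails. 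The patch is straightforward: slide over $\sigma\in[-\sigma_0,\sigma_0]$, observe that Hausdorff convergence forces $\sigma_k^*\to0$, and replace ``$V\ge V_{\sigma_k^*}$'' by ``$\|V-V_{\sigma_k^*}\|_{L^\infty}\to0$''; the bound $g_k\ge\alpha V_{\sigma_k^*}-\eps_k'$ with $\eps_k'\to0$ survives and Lemma \ref{wv1} applies exactly as you wrote.

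For comparison, the paper avoids the first--touch argument entirely. After dilating so that $V\in\mathcal V_\delta$, it considers the translates $W_t:=\alpha V(\cdot+te_n)$ for $|t|\le\sigma$ and first obtains $g_k\ge(1-C\eps)W_t$ only in the annulus $B_{3/4}\setminus B_{1/4}$, via Remark \ref{wv2} (the shell version of Lemma \ref{wv1}), which requires the phase inclusion $F(W_t)\subset\{g_k>0\}$ only outside $B_{1/8}$---precisely where the strict touching of $\mathcal S$ with $F(g_*)$ guarantees it. It then propagates this bound to the interior for all $|t|\le\sigma$ by continuous sliding in $t$ combined with the comparison Lemma \ref{linearcomp} (starting from $t=-\sigma$, where the full phase inclusion does hold and Lemma \ref{wv1} applies directly). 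Evaluating at $t=\sigma$ gives $g_k>0$ in a fixed neighborhood of $X_0$, contradicting the Hausdorff convergence of the zero sets. So the paper's contradiction is ``$\{g_k=0\}$ misses a neighborhood of $X_0$'' rather than your ``$g_k$ admits an inadmissible test function''; your route is more direct once the sliding range is corrected, while the paper's route sidesteps the sign issue for $\sigma_k^*$ at the price of invoking Lemma \ref{linearcomp}.
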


\begin{proof} Clearly $g_*$ is harmonic in $B_1^+(g_*).$
In view of Remark \ref{newdef} we need to check say that if $0 \in F(g_*)$ there exists no subsolution $V_{M,0,a,b}$ such that in a neighborhood of $0$, 
$$ g_* \ge \alpha V_{M,0,a,b}, \quad \mbox{for some $\alpha>1$}$$ and $F(V)$ touches strictly $F(g_*)$ at $0$ from the positive side. A similar statement can be checked also for supersolutions.

Assume by contradiction that such a $V=V_{M,0,a,b}$ exists. Then after a dilation we may assume that $V \in \mathcal V_\delta$ for some small $\delta$ and $V$ is a subsolution in $B_1.$ 

For any $\eps>0$ there exists $\sigma>0$ such that for all $|t| \leq \sigma$ and all large $k$'s
$$W_t(X):=\alpha V(X+te_n) \leq g_k -\eps \quad \mbox{in $B_1$},$$
and
$$F(W_{-\sigma})\subset B_1^+(g_k), \quad \quad F(W_t) \setminus B_{1/8} \subset B_1^+(g_k).$$ 
By Lemma \ref{wv1} and Remark \ref{wv2} we obtain that
$$g_k \ge (1- C \eps) W_{-\sigma} \quad \mbox{in $B_{1/2}$}$$
and
$$g_k \ge (1- C \eps) W_{t} \quad \mbox{in $B_{3/4}\setminus B_{1/4}$, for all $|t| \le \sigma$.}$$
By choosing $\eps$ small (depending on $\alpha$) we see that the functions $W_t$ are strict subsolutions to our free boundary problem, and hence the inequality above can be extended in the interior (see Lemma \ref{linearcomp}) i.e.,
$$g_k \ge (1- C \eps) W_{t} \quad \mbox{in $B_{1/2}$}.$$
Writing this for $t=\sigma$ we see that $\{g_k=0\}$ stays outside a neighborhood of the origin and we contradict the convergence in the Hausdorff distance to $\{g_*=0 \}$. 

\end{proof}

We conclude this section by showing that our flatness assumption on the free boundary  $F(g)$, implies closeness of $g$ and $U$.

\begin{lem}\label{differentassumption} Assume $g$ solves \eqref{FB}. Given any $\delta>0$ there exist $\bar \eps>0$ and $\mu>0$ depending on $\delta$ such that if 
\begin{equation}\label{Flat2} \{x \in \mathcal{B}_1 : x_n \leq -\bar \eps\} \subset \{x \in \mathcal{B}_1 : g(x,0)=0\} \subset \{x \in \mathcal{B}_1 : x_n \leq \bar \eps \},\end{equation} then 
$$U(X-\mu \delta e_n) \le g(X) \le U(X+\mu \delta e_n)  \quad \mbox{in $B_\mu$.}$$
\end{lem}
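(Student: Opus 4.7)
I would argue by compactness and contradiction. Suppose the lemma fails for some $\delta_0>0$: then there exist a sequence $\bar\eps_k \downarrow 0$ and solutions $g_k$ of \eqref{FB} satisfying \eqref{Flat2} with $\bar\eps_k$, yet such that no common $\mu>0$ makes the desired sandwich $U(X-\mu\delta_0 e_n)\le g_k(X)\le U(X+\mu\delta_0 e_n)$ hold in $B_\mu$ for all large $k$.

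First I would set up compactness. By the optimal $C^{1/2}$-regularity estimate (Lemma \ref{optimal}) together with the barrier $U(X-\bar\eps_k e_n)$ -- whose positivity set is contained in $B_1^+(g_k)$ by \eqref{Flat2}, so that Lemma \ref{bhV} bounds $g_k$ from above in terms of $U$ -- the $g_k$ are uniformly bounded in $C^{1/2}(\overline{B_{1/2}})$. Extracting a subsequence, $g_k \to g_*$ uniformly on compact subsets of $B_1$. The flatness \eqref{Flat2} with $\bar\eps_k \downarrow 0$, combined with a nondegeneracy bound $g_k(x,0)\ge c\,d(x,F(g_k))^{1/2}$ (obtained by comparison with the subsolutions $V_{\mathcal S,a,b}$ whose $\mathcal S$ is tangent to $F(g_k)$ from the zero side, as in Lemma \ref{bhV}), forces the Hausdorff convergence $\{g_k=0\}\cap\overline{\mathcal B_{1/2}} \to \{x_n\le 0\}\cap\overline{\mathcal B_{1/2}}$. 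By the compactness Proposition \ref{compactness}, $g_*$ is a viscosity solution of \eqref{FB} on $B_1$ with $F(g_*)\cap\mathcal B_{1/2} = L\cap\mathcal B_{1/2}$.

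Next I would identify $g_*\equiv U$ on $B_{1/2}$. Since $F(g_*)\cap\mathcal B_{1/2}$ is the flat hyperplane $L\cap\mathcal B_{1/2}$, two-sided tangent balls exist at every point, so Lemma \ref{oneside} and Remark \ref{noc12} give the asymptotic expansion
$$g_*(X) = \alpha(x_0)\,U(X-x_0) + o(|X-x_0|^{1/2}) \quad\text{at each } x_0\in L\cap\mathcal B_{1/2}.$$
The viscosity condition, tested against $(1\pm\eta)V_{\mathcal S,a,b}$ as in Remark \ref{newdef}, forces $\alpha(x_0)\equiv 1$ (otherwise one could construct a strict sub- or super-solution barrier of this form that touches $g_*$ from the appropriate side). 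The difference $h:=g_*-U$ is then harmonic on $B_{1/2}\setminus P$, vanishes on $P\cap B_{1/2}$, is even in $s$, and is $o(|X-x_0|^{1/2})$ at every $x_0\in L$; a boundary Harnack comparison (Theorem \ref{bhV}) of $|h|$ against $U$ on the slit domain forces $h\equiv 0$, hence $g_*\equiv U$ in $B_{1/2}$.

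Finally, since $g_k\to U$ uniformly on $B_{1/2}$ and since $U(X\pm\mu\delta_0 e_n)-U(X)$ is bounded below (in the appropriate one-sided sense) by a quantity of order $(\mu\delta_0)^{1/2}$ on $B_\mu$ -- as follows from the explicit formula $U=\rho^{1/2}\cos(\theta/2)$ -- the uniform error $\|g_k-U\|_{L^\infty(B_\mu)}$ becomes negligible compared to the translation perturbation, provided $\mu$ is chosen small depending on $\delta_0$. Thus for all $k$ sufficiently large,
$$U(X-\mu\delta_0 e_n) \le g_k(X) \le U(X+\mu\delta_0 e_n) \quad \text{in } B_\mu,$$
contradicting the choice of $g_k$. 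The main obstacle is the identification step: promoting the pointwise expansion $g_*=U+o(|\cdot-x_0|^{1/2})$ at each FB point to the global identity $g_*\equiv U$ in $B_{1/2}$, a Liouville-type statement on the slit domain $B_{1/2}\setminus P$ that relies on the reference singular behavior of $U$.
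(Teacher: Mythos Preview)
Your overall strategy (compactness and contradiction) matches the paper's, but two of the three steps contain genuine gaps.

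\textbf{The identification $g_*\equiv U$ is false.} Take, for small $c$,
\[
g_*(X)=U(X)+c\,r^{3/2}\cos\tfrac{3\theta}{2}, \qquad r^2=x_n^2+s^2.
\]
This is harmonic in $B_1\setminus P$, positive there, vanishes on $P$, has $F(g_*)=L$, and at every $x_0\in L$ satisfies $g_*=U+O(r^{3/2})=U+o(|X-x_0|^{1/2})$, so $\partial g_*/\partial U\equiv 1$. Thus $g_*$ solves \eqref{FB} with flat free boundary yet $g_*\not\equiv U$. Your boundary Harnack argument cannot close this: $|h|$ is not harmonic, and even when $h$ has a sign the boundary Harnack inequality only yields $h/U\in C^\gamma$ up to $L$, which together with $h/U\to 0$ on $L$ does not force $h\equiv 0$ (the example above has $h/U=c\,r(4\cos^2(\theta/2)-3)$). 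The paper never attempts this identification; instead it separates variables, argues as in Theorem \ref{class}, and obtains only the quantitative expansion $|g_*(X)-U(X)|\le C|X|^{3/2}$ at the origin --- exactly what the counterexample allows, and exactly what is needed.

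\textbf{The final comparison step is also incomplete.} The gap $U(X+\mu\delta_0 e_n)-U(X)$ is \emph{not} bounded below by a quantity of order $(\mu\delta_0)^{1/2}$ on $B_\mu$. Near $P\setminus L$ (say $X=(0,-\mu/2,\sigma)$ with $\sigma\to 0$) both $U$ and its translate vanish linearly in $\sigma$, and their difference is of order $\sigma\delta_0/\sqrt{\mu}\to 0$, while the uniform error $\|g_k-U\|_{L^\infty(B_\mu)}$ does not decay with $\sigma$. So mere uniform convergence cannot produce the sandwich. The paper overcomes this by first rescaling --- the $O(|X|^{3/2})$ rate gives $|g_{k,\mu}-U|\le C\mu$ in $B_2$ --- and then applying Lemma \ref{wv1} to convert the additive bound $g_{k,\mu}\ge U-C\mu$ into the multiplicative bound $g_{k,\mu}\ge (1-C\mu)\,U(\cdot-\bar\eps_k\mu^{-1}e_n)$, from which the translation sandwich follows by one more boundary Harnack step. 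Both the quantitative rate and Lemma \ref{wv1} are essential; your sketch omits them.
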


\begin{proof}
The proof is by compactness. Assume by contradiction that a sequence of functions $g_k$ satisfies the hypotheses with $\bar \eps_k \to 0$ but the conclusion does not hold. 

Notice that by Harnack inequality $g_k(e_{n+1}/2)$ is bounded be a multiple of $g_k(e_n/2)$ which in view of Lemma \ref{optimal} is bounded by a universal constant. Hence by the second claim in Lemma \ref{optimal} the $g_k$'s have uniformly bounded $C^{1/2}$ norms on compact subsets of $B_1$. After passing to a subsequence we can assume that $g_k$ converges uniformly on compact sets of $B_1$ to a function $g_*$ with
\begin{equation}\label{g*}
\triangle g_*=0 \quad \mbox{in $B_{1} \setminus P$}, \quad \quad g_*=0 \quad \mbox{on $P \cap B_1$.}
\end{equation}

By Remark \ref{noc12}, $g_*$ is $C^{1/2}$. Moreover, the derivatives of $g_*$ in the $x'$ direction satisfy again \eqref{g*} and we obtain $$\|D_{x'}^\beta g_*\|_{C^{1/2}(B_{1/2})} \le C(\beta).$$
Now we can separate the variables and write
 $$\Delta_{x_n,s}g_*=-\Delta_{x'}g_*$$
and we can argue as in Theorem \ref{class} to obtain
$$|g_*(X)-\alpha U(X)| \le C |X|^{3/2}$$ with $C$ universal.

We now want to apply Proposition \ref{compactness} to conclude that  $g_*$ solves \eqref{FB} and hence $\alpha=1$. To do so, we must guarantee that $g_*>0$ in $B_{1} \setminus P.$
Otherwise $g_* \equiv 0$ and hence $\|g_k\|_{L^\infty(B_{1/2})} \to 0$. Let $\mathcal B_k:=\mathcal B_{1/8}(x_k)$ be a ball tangent to $F(g_k)$ from the zero side at some point $y_k \in B_{1/8}$. Then, since $\|g_k\|_{L^\infty(B_{1/2})} \to 0$, we have by Lemma \ref{bhV} $$g_k \leq \sigma_k V_{\p B_k,0,0}, \quad \mbox{with $\sigma_k \to 0$}.$$
This contradicts the free boundary condition for $g_k$ at $y_k$.

In conclusion, $g_*$ solves \eqref{FB} and 

$$|g_*(X)-U(X)| \le C |X|^{3/2}$$ with $C$ universal.

Rescaling we find
$$| g_{k,\mu}(X)-U(X)| \le C \mu \quad \mbox{in $B_2$}, \quad \mbox{with} \quad g_{k,\mu}(X):=\mu^{-1/2}g_k(\mu X).$$Thus $$g_{k,\mu}(X)  \ge U(X)-C\mu \ge U(X-\bar\eps_k \mu^{-1} e_n)-C \mu.$$  Now we use that $F(g_{k,\mu}) \subset \{|x_n| \le \bar\eps_k \mu^{-1} \}$ and obtain by Lemma \ref{wv1} that in $B_1$
$$ g_{k,\mu} \ge (1-C \mu) U(X-\bar\eps_k \mu^{-1} e_n) \ge U(X-(\bar\eps_k\mu^{-1}+C \mu) e_n )$$
where the last inequality follows once more from boundary Harnack  inequality. A similar inequality bounds $g_{k,\mu}$ by above. We choose $\mu$ small depending on $\delta$ and obtain that $g_{k,\mu}$ satisfies the conclusion of the theorem
$$U(X-\delta e_n) \le \mu^{-1/2}g_k(\mu X) \le U(X+\delta e_n),$$
and we reach a contradiction.
\end{proof}

\end{document}